\newtheorem{thm}{Theorem}[section]
\newtheorem{lem}[thm]{Lemma}
\newtheorem{prop}[thm]{Proposition}
\newtheorem*{prob*}{Open problem}
\theoremstyle{definition}
\newtheorem{defi}[thm]{Definition}
\theoremstyle{remark}
\newtheorem*{rem*}{Remark}
\DeclareMathOperator{\id}{id}
\newcommand{\kringel}{\mathbin{\raise1pt\hbox{$\scriptstyle\circ$}}} 
\newcommand{\pkt}{\mathbin{\raise0pt\hbox{$\scriptstyle\bullet$}}}
\newcommand{\C}{\mathbb{C}}
\newcommand{\N}{\mathbb{N}}
\newcommand{\tr}{\mathop{\rm tr}}
\newcommand{\End}{\mathop{\rm End}}
\newcommand{\Der}{\mathop{\rm Der}}
\newcommand{\diag}{\mathop{\rm diag}}
\newcommand{\Lg}{\mathfrak{g}}
\newcommand{\Ln}{\mathfrak{n}}
\newcommand{\Lr}{\mathfrak{r}}
\newcommand{\CL}{\mathcal{L}}
\newcommand{\CR}{\mathcal{R}}
\newcommand{\al}{\alpha}
\newcommand{\be}{\beta}
\newcommand{\ga}{\gamma}
\newcommand{\la}{\lambda}
\newcommand{\ov}{\overline}
\newcommand{\ra}{\rightarrow}
\renewcommand{\phi}{\varphi}
\DeclareFontFamily{OT1}{pzc}{}
\DeclareFontShape{OT1}{pzc}{m}{it}%
              {<-> s * [1.100] pzcmi7t}{}
\DeclareMathAlphabet{\mathpzc}{OT1}{pzc}%
                                 {m}{it}
\begin{document}

\title[Classification of orbit closures]{Classification of orbit closures in the variety
of 3-dimensional Novikov algebras}

\author[T. Bene\v{s}]{Thomas Bene\v{s}}
\author[D. Burde]{Dietrich Burde}

\address{Fakult\"at f\"ur Mathematik\\
Universit\"at Wien\\
  Nordbergstr. 15\\
  1090 Wien \\
  Austria} 
\email{thomas.benes@univie.ac.at}
\email{dietrich.burde@univie.ac.at}
\date{\today}

\subjclass[2010]{Primary 17D25, 14Q30; Secondary 81R05}
\thanks{The authors were supported by the FWF, Projekt P21683.}

\begin{abstract}
We classify the orbit closures in the variety $\mathpzc {Nov}_3$ of complex,
$3$-dimensional Novikov algebras and obtain the Hasse diagrams for the closure 
ordering of the orbits. We provide invariants which are easy to compute and which enable us  
to decide whether or not one Novikov algebra degenerates to another Novikov algebra. 
\end{abstract}

\maketitle

\section{Introduction}

Consider the variety of $K$-algebra structures on a given $n$-dimensional vector space
over a field $K$. Then the group  $GL_n(K)$ acts on these structures, and it is interesting
to study the orbit closures  with respect to the Zariski topology. If a structure representing
a $K$-algebra $B$ is contained in the orbit closure of a structure representing a
$K$-algebra $A$, then it is said that {\it $A$ degenerates to $B$}. This is denoted by
$A \ra_{\rm deg} B$ or $B\in \ov{O(A)}$. Degenerations have been first studied for
Lie algebras. There are many articles on this subject, see for example \cite{NEP} and the references 
cited therein. The motivation here came from physics, where Lie algebra degenerations have been 
studied for the special case of contractions, which are limiting processes between Lie algebras 
\cite{SEG},\cite{IWI}. For example, classical mechanics is a limiting case of quantum mechanics 
as $\hbar \ra 0$, described by a contraction of the Heisenberg-Weyl Lie algebra to the abelian 
Lie algebra of the same dimension. 
Degenerations have been also studied for other types of algebras, among them commutative algebras, 
associative algebras, Leibniz algebras \cite{FIA}, \cite{RAK}, pre-Lie algebras and 
Novikov algebras \cite{BU36}. The latter algebras have interesting applications in operad theory,
gemoetry and physics, see \cite{BU24}, \cite{BAL}, \cite{COK}. \\
In \cite{BU36} we have obtained a classification of orbit closures in the variety
$\mathpzc {Nov}_2$ of complex, $2$-dimensional Novikov algebras. In this article we 
extend this classification to dimension three. Unfortunately the complexity then
is much greater than in dimension two. Both authors have done the classification 
independently, using different methods. In the end, our results coincide. For the first
author the classification is part of his thesis \cite{BEN}. \\
To succeed in the classification, one needs effective invariants in order to decide 
whether or not a certain degeneration exists. This can always be 
decided in principle. A general result of Popov \cite{POP} implies that there exists 
an algorithm to decide whether or not one orbit lies in the orbit closure of another orbit. 
However this algorithm is not very efficient, and the calculations involved are not feasable, 
not even in dimension two. Therefore we need to provide elementary invariants which are 
easy to compute. Fortunately we have found very effective invariants. 
Nevertheless, for some cases we have to give additional arguments, such as special
applications of Borel's closed orbit lemma \cite{BOR}, \cite{GRH}, or studying left and
right annihilators under degeneration. \\

\section{The variety of Novikov algebras and degenerations}

Novikov algebras are a special class of pre-Lie algebras. For more background on these
algebras see  \cite{BU24} and the references given therein.
The definition is as follows:

\begin{defi}\label{nov}
A $K$-algebra $A$ together with a bilinear product $(x,y)\mapsto x\cdot y$ is called
a {\it Novikov algebra}, if the identities
\begin{align}
(x\cdot y) \cdot z- x \cdot (y \cdot z) & = (y\cdot x) \cdot z- y \cdot (x \cdot z)\label{nov1} \\ 
(x\cdot y)\cdot z & = (x\cdot z)\cdot y \label{nov2}
\end{align}
hold for all $x,y,z\in A$. 
\end{defi}

Denote by $L(x)$ the left multiplications, and by $R(x)$ the right multiplications.
A Novikov algebra $A$ is called {\it complete}, if all linear operators $R(x)$ are nilpotent.
The commutator $[x,y]=x\cdot y-y\cdot x$ defines a Lie bracket. We denote the associated 
Lie algebra by $\Lg_A$. Let $V$ be a vector space of dimension $n$ over $K$. 
Fix a basis $(e_1,\ldots,e_n)$ of $V$. If $(x,y)\mapsto x\cdot y$ is a Novikov algebra product 
on $V$ with $e_i\cdot e_j= \sum_{k=1}^n c_{ij}^k e_k$, then $(c_{ij}^k)\in K^{n^3}$
is called a {\it Novikov algebra structure} on $V$. These structures are points
of the {\it variety} of Novikov algebra structures. Here variety means that this is an affine 
algebraic set. Indeed, the Novikov identities are given by polynomials in the structure 
constants $c_{ij}^k$. 

\begin{defi}
Denote by $\mathpzc {Nov}_n(K)$ the set of all Novikov algebra structures on an
$n$-dimensional vector space $V$ over $K$. This set is called the 
{\it variety of Novikov algebra structures}.
\end{defi}

The general linear group $GL_n(K)$ acts on $\mathpzc {Nov}_n(K)$ by
\begin{equation*}
(g\kringel \mu)(x,y)=g(\mu(g^{-1}x, g^{-1}y))
\end{equation*}
for $g\in GL_n(K)$ and $x,y\in V$. Denote by $O(\mu)$ the orbit of $\mu$ under this action, 
and by $\ov{O(\mu)}$ the closure of the orbit with respect to the Zariski topology. 
Recall the following notations from \cite{BU36}:

\begin{defi}
Let $A$ and $B$ be two $n$-dimensional Novikov algebras. The algebra $A$ {\it degenerates}
to the algebra $B$, if the algebra $B$ is represented by a structure $\mu \in \mathpzc {Nov}_n(K)$ which
lies in the Zariski closure $\ov{O(\la)}$ of the $GL_n(K)$-orbit of some structure 
$\la\in \mathpzc {Nov}_n(K)$ which represents $A$. We write $A\ra_{\rm deg}B$.
\end{defi}

We also use the notations $O(A)$ and $\ov{O(A)}$ for the orbit and its closure of a
structure $\la\in \mathpzc {Nov}_n(K)$ representing $A$.

\begin{defi}
Let $A$ and $B$ be two $n$-dimensional Novikov algebras. We say that a degeneration 
$A \ra_{\rm deg} B$ is {\it proper}, if $B \in \partial O(A)=\ov{O(A)}\setminus O(A)$, i.e., 
if $A$ and $B$ are not isomorphic.
\end{defi}

The process of degeneration in $\mathpzc {Nov}_n(K)$ defines a partial order on the orbit space
of $n$-dimensional Novikov algebra structures, given by $O(B)\le O(A) \iff B\in \ov{O(A)}$.
We write $O(B)< O(A) \iff B\in \partial O(A)$.
In particular this relation is transitive:  If $A \ra_{\rm deg} B$ and  $B \ra_{\rm deg} C$,
then $A \ra_{\rm deg} C$. 
\begin{defi}
A degeneration $A \ra_{\rm deg} B$ is called {\it essential}, if it cannot
be obtained by transitivity. 
\end{defi}

Some authors use a slighly more imaginative expression. In \cite{DOK}, $O(C)$ is called a 
{\it child} of $O(A)$ if $O(C) < O(A)$ and there is no algebra $B$ such that
$O(C ) < O(B) < O(A)$. \\[0.2cm]
We can represent the degenerations with respect to the partial order
in a diagram: order the Novikov algebras by the dimension of its derivation algebra, in each row the
algebras with the same dimension, on top the ones with the lowest derivation dimension. 
Recall that if $A\ra_{\rm deg} B$ is a proper degeneration of two $n$-dimensional Novikov algebras
then $\dim \Der(A)<\dim \Der (B)$, see \cite{BU36}.
Draw a directed arrow between two algebras $A$ and $B$, if $A$ degenerates to $B$. Arrows following
from transitivity may be omitted.

\begin{defi}
The diagram described above is called the {\it Hasse diagram} of degenerations
in $\mathpzc {Nov}_n(K)$. It shows the classification of orbit closures.
\end{defi}

Of course, we can also give Hasse diagrams for subclasses of $n$-dimensional Novikov algebras.
This is very reasonable to do, because the Hasse diagram for the whole variety, even in dimension
$3$, will be too complicated. We recall the following result from \cite{BU36} concerning
the construction of degenerations. 

\begin{lem}\label{2.7}
Every degeneration $A\ra_{\rm deg} B$ of complex Novikov algebras can be realized by a  
sequential contraction $\lim_{t \to 0}g_{t} \kringel \la=\mu$ with
$g_t\in GL_n(\C)$, $\la$ representing $A$, $\mu$ representing $B$.
\end{lem}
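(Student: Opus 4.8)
The plan is to identify the algebraic degeneration condition $\mu \in \ov{O(\la)}$ with an analytic limit, and then to produce an explicit one-parameter family realizing it. The two ingredients I would use are the coincidence of Zariski and Euclidean closures for orbits, and the curve selection lemma.

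First I would unwind the definition: $A\ra_{\rm deg}B$ says exactly that $\mu$, a structure representing $B$, lies in the Zariski closure $\ov{O(\la)}$ of the orbit of a structure $\la$ representing $A$. If $\mu \in O(\la)$ there is nothing to do (take $g_t$ constant), so I may assume $\mu$ lies in the boundary $\partial O(\la)$. The orbit $O(\la)$ is the image of the morphism $\pi\colon GL_n(\C)\to \mathpzc{Nov}_n(\C)$, $g\mapsto g\kringel\la$; by Chevalley's theorem it is constructible, and as an orbit of an algebraic group action it is smooth, irreducible and locally closed. The first key step is the standard fact that for a constructible subset of $\C^N$ the Zariski closure agrees with the closure in the Euclidean topology. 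This converts the hypothesis into the statement that $\mu$ lies in the Euclidean closure of $O(\la)$.

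Next I would apply the curve selection lemma. Since $O(\la)$ is a (real) semialgebraic set and $\mu$ lies in its Euclidean closure, there is a real-analytic arc $\ga\colon[0,\ep)\to \mathpzc{Nov}_n(\C)$ with $\ga(0)=\mu$ and $\ga(t)\in O(\la)$ for $t\in(0,\ep)$. It remains to lift this arc to the group: because $\pi$ is a smooth surjection onto the smooth variety $O(\la)$ (a homogeneous fibration by cosets of the stabilizer of $\la$), over the punctured arc I can choose $g_t\in GL_n(\C)$, after at worst a ramified reparametrization $t\mapsto t^k$, with $\pi(g_t)=\ga(t)$, i.e. $g_t\kringel\la=\ga(t)$. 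Then $\lim_{t\to0}g_t\kringel\la=\lim_{t\to0}\ga(t)=\mu$, which is precisely the asserted sequential contraction.

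The step I expect to be the main obstacle is the lifting: the arc $\ga$ lives in the orbit closure, while I need a curve in $GL_n(\C)$ mapping onto it, and this curve cannot extend continuously to $t=0$ whenever $A\not\cong B$, since the matrices $g_t$ must blow up. The content is therefore to control only the composite $g_t\kringel\la$ rather than $g_t$ itself; the existence of the lift on the punctured interval comes from local triviality of $\pi$ over the smooth locus of its image, and the unavoidable divergence of $g_t$ as $t\to0$ is exactly the phenomenon that distinguishes a proper degeneration from an isomorphism.
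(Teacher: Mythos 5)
The paper does not prove this lemma here: it is recalled from \cite{BU36}, where the argument rests on the standard fact that for a constructible subset of $\C^N$ (such as the orbit $O(\la)$, by Chevalley) the Zariski closure coincides with the Euclidean closure, so that $\mu\in\ov{O(\la)}$ yields points of $O(\la)$ converging to $\mu$ in the usual topology. Your proposal contains exactly this core step and is correct; it then goes further than is strictly needed for the statement as used. If ``sequential contraction'' is read as a sequence $g_{t_n}\kringel\la\to\mu$, the proof ends once Zariski and Euclidean closures are identified: pick any sequence in $O(\la)$ converging to $\mu$ and choose arbitrary preimages in $GL_n(\C)$; no curve selection and no lifting are required. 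Your additional two steps --- the curve selection lemma for the semialgebraic set $O(\la)\subset\R^{2N}$, and the lift of the resulting arc through the orbit map $\pi\colon GL_n(\C)\to O(\la)$ --- are sound (the lift exists on $(0,\ep)$ because $\pi$ is a principal bundle over the homogeneous space $O(\la)$ in the analytic topology, and $(0,\ep)$ is contractible), and they buy a genuinely continuous one-parameter family, which is closer to how the lemma is actually exploited later in the paper, where explicit $g_t\in GL_n(\C(t))$ are exhibited. Two small remarks: the ramified reparametrization $t\mapsto t^k$ is not needed for a merely continuous lift, only if you want $g_t$ algebraic in $t$; and your closing observation that $g_t$ must blow up as $t\to 0$ for a proper degeneration is correct and is precisely why one controls $g_t\kringel\la$ rather than $g_t$.
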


For two given Novikov algebras $A$ and $B$ we want to decide whether $A$ degenerates to $B$
or not. If the answer is yes, then we can find a matrix $g_t \in GL_n(\C(t))$ realizing such a 
degeneration. Otherwise we will give an argument to show that such a degeneration is impossible. 
This can be done by providing invariants for the whole orbit of the given algebra, and hence also 
for its Zariski closure. We list the invariants which we will need. For more details and proofs
see \cite{BU36}. Let $A$ be a Novikov algebra, and choose $i,j\in \N$. The generalized trace 
invariants $c_{i,j}(A)$ and $d_{i,j}(A)$, if they exist, are given by the equations
\begin{align*}
c_{i,j}(A)\cdot \tr (L(x)^i L(y)^j) & = \tr (L(x)^i)\cdot \tr (L(y)^j),\\
d_{i,j}(A)\cdot \tr (R(x)^i R(y)^j) & = \tr (R(x)^i)\cdot \tr (R(y)^j).
\end{align*}
They can be defined by the quotient, if the equations hold for all $x,y\in A$ and the denominator 
is different from the zero polynomial in the structure constants.

\begin{lem}\label{inv1}
Suppose that $A$ degenerates to $B$ and both $c_{i,j}(A)$ and $c_{i,j}(B)$ exist. Then it
follows $c_{i,j}(A)=c_{i,j}(B)$. The same applies for $d_{i,j}(A)$ and $d_{i,j}(B)$.
\end{lem}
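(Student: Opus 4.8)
The plan is to show that $c_{i,j}$ is a $GL_n(\C)$-invariant that is constant on each orbit, and that its defining polynomial relation survives passage to the orbit closure along a sequential contraction. First I would record how the multiplication operators transform under the action. Reading off from $(g\kringel\mu)(x,y)=g(\mu(g^{-1}x,g^{-1}y))$ gives $L_{g\kringel\mu}(x)=g\,L_\mu(g^{-1}x)\,g^{-1}$, hence $L_{g\kringel\mu}(x)^i=g\,L_\mu(g^{-1}x)^i\,g^{-1}$ and likewise for products. By conjugation invariance of the trace,
\begin{align*}
\tr\bigl(L_{g\kringel\mu}(x)^i L_{g\kringel\mu}(y)^j\bigr)&=\tr\bigl(L_\mu(g^{-1}x)^i L_\mu(g^{-1}y)^j\bigr),\\
\tr\bigl(L_{g\kringel\mu}(x)^i\bigr)\tr\bigl(L_{g\kringel\mu}(y)^j\bigr)&=\tr\bigl(L_\mu(g^{-1}x)^i\bigr)\tr\bigl(L_\mu(g^{-1}y)^j\bigr).
\end{align*}
Since $x\mapsto g^{-1}x$ is a linear bijection of $V$, the two sides of the defining relation for $g\kringel\mu$ are those for $\mu$ after the substitution $(x,y)\mapsto(g^{-1}x,g^{-1}y)$. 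Hence the polynomial $\tr(L(x)^iL(y)^j)$ vanishes identically for $g\kringel\mu$ exactly when it does for $\mu$, and the scalar solving the relation is unchanged; that is, $c_{i,j}$ is constant on $O(A)$.

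Next I would pass to the closure. By Lemma \ref{2.7} the degeneration $A\ra_{\rm deg}B$ is realized by a sequential contraction $\lim_{t\to0}g_t\kringel\la=\mu$, with $\la$ representing $A$ and $\mu$ representing $B$. For every $t\neq0$ the structure $g_t\kringel\la$ lies in $O(A)$, so by the previous step, for all $x,y\in V$,
\begin{equation*}
c_{i,j}(A)\cdot\tr\bigl(L_{g_t\kringel\la}(x)^i L_{g_t\kringel\la}(y)^j\bigr)=\tr\bigl(L_{g_t\kringel\la}(x)^i\bigr)\tr\bigl(L_{g_t\kringel\la}(y)^j\bigr).
\end{equation*}
The entries of $L_\nu(x)$ are linear in the structure constants of $\nu$ and in the coordinates of $x$, so as $t\to0$ the structure constants of $g_t\kringel\la$ converge to those of $\mu$, and each side above, being a polynomial in these entries, converges to the corresponding expression for $\mu$. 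Fixing $x,y$ and letting $t\to0$ therefore yields
\begin{equation*}
c_{i,j}(A)\cdot\tr\bigl(L_\mu(x)^i L_\mu(y)^j\bigr)=\tr\bigl(L_\mu(x)^i\bigr)\tr\bigl(L_\mu(y)^j\bigr)
\end{equation*}
for all $x,y\in V$, so the scalar $c_{i,j}(A)$ satisfies the defining relation for $B$.

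Finally I would use the hypothesis that $c_{i,j}(B)$ exists: by definition this means the polynomial $\tr(L_\mu(x)^i L_\mu(y)^j)$ is not identically zero in $x,y$, and $c_{i,j}(B)$ is the scalar with $c_{i,j}(B)\cdot\tr(L_\mu(x)^i L_\mu(y)^j)=\tr(L_\mu(x)^i)\tr(L_\mu(y)^j)$. Subtracting this from the relation satisfied by $c_{i,j}(A)$ gives $(c_{i,j}(A)-c_{i,j}(B))\cdot\tr(L_\mu(x)^i L_\mu(y)^j)\equiv0$ in $x,y$; as the second factor is a nonzero polynomial, $c_{i,j}(A)=c_{i,j}(B)$. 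The statement for $d_{i,j}$ follows verbatim, replacing $L$ by $R$ and using $R_{g\kringel\mu}(x)=g\,R_\mu(g^{-1}x)\,g^{-1}$. The one point requiring care is the interchange of the limit $t\to0$ with the defining relation; this is justified purely because both sides are polynomial, hence continuous, in the converging structure constants, and it is this continuity of the relation, rather than any property of the quotient, that makes the invariant survive the passage to the orbit closure.
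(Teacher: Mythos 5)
Your proof is correct and follows essentially the same route as the paper's (the paper itself gives no proof here but defers to \cite{BU36}, whose argument is the same): $c_{i,j}$ is constant on an orbit by conjugation-invariance of the trace together with the substitution $(x,y)\mapsto(g^{-1}x,g^{-1}y)$, and the defining identity with the fixed scalar $c_{i,j}(A)$ is a closed condition on structure constants, hence survives passage to the orbit closure, after which existence of $c_{i,j}(B)$ forces the two scalars to agree. The only cosmetic difference is that you carry out the closure step via the sequential contraction of Lemma \ref{2.7}, using continuity of the polynomial identity along $t\to 0$, where one could equivalently observe that the set of structures satisfying the identity is Zariski-closed and contains $O(A)$; both are valid.
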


By an operator identity in $L(x)$ and $R(x)$ for $A$ we mean a polynomial $T(x)\in \End (A)$ 
in $L(x)$ and $R(x)$, which is zero for all $x\in A$. 
For example, if $A$ is commutative, then the operator $T(x)=L(x)-R(x)$ 
satisfies $T(x)=0$ for all $x\in A$. 

\begin{lem}\label{inv2}
Suppose that $A$ degenerates to $B$ and $T(x)=0$ for all $x\in A$. Then it follows $T(x)=0$ for
all $x\in B$.
\end{lem}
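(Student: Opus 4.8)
The plan is to show that the locus in $\mathpzc {Nov}_n(K)$ where the operator identity $T$ holds is a Zariski-closed, $GL_n(K)$-invariant subset, so that it automatically contains the orbit closure of any of its points. This requires no hard analysis, only an explicit description of how $T$ depends on the structure constants and how it transforms under the group action.

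First I would make the dependence of $T$ on the structure constants explicit. For a structure $\mu$ with constants $c_{ij}^k$, the matrices of $L_\mu(x)$ and $R_\mu(x)$ have entries that are linear in the coordinates of $x$ with coefficients linear in the $c_{ij}^k$; for instance the $(k,j)$-entry of $L_\mu(x)$ is $\sum_i c_{ij}^k x_i$. Hence any word in $L_\mu(x)$ and $R_\mu(x)$, and therefore $T_\mu(x)$ itself, has matrix entries that are polynomials in the coordinates $x_1,\dots,x_n$ whose coefficients are polynomials in the $c_{ij}^k$. The condition ``$T_\mu(x)=0$ for all $x\in V$'' is then equivalent to the vanishing of all these coefficient polynomials, a finite system of polynomial equations in the $c_{ij}^k$. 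Thus
\[
Z_T=\{\mu\in\mathpzc {Nov}_n(K) : T_\mu(x)=0 \text{ for all } x\in V\}
\]
is Zariski closed.

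Next I would verify that $Z_T$ is $GL_n(K)$-invariant by checking that $T$ is equivariant. From $(g\kringel\mu)(x,y)=g(\mu(g^{-1}x,g^{-1}y))$ one reads off $L_{g\kringel\mu}(x)=g\,L_\mu(g^{-1}x)\,g^{-1}$ and $R_{g\kringel\mu}(x)=g\,R_\mu(g^{-1}x)\,g^{-1}$. Since conjugation by $g$ is an automorphism of the associative algebra $\End(V)$ fixing the identity, every monomial in these operators, and hence $T$, satisfies $T_{g\kringel\mu}(x)=g\,T_\mu(g^{-1}x)\,g^{-1}$. Because $g^{-1}x$ ranges over all of $V$ as $x$ does, $T_{g\kringel\mu}$ vanishes identically if and only if $T_\mu$ does; so $g\kringel\mu\in Z_T$ whenever $\mu\in Z_T$. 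With both properties in hand the conclusion is immediate: if $\la$ represents $A$ and $T_\la\equiv 0$, then $\la\in Z_T$, whence $O(\la)\subseteq Z_T$ by invariance and $\ov{O(\la)}\subseteq Z_T$ by closedness. Any $\mu\in\ov{O(\la)}$ representing $B$ therefore lies in $Z_T$, i.e. $T_\mu\equiv 0$.

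I expect the only delicate point to be the equivariance identity $T_{g\kringel\mu}(x)=g\,T_\mu(g^{-1}x)\,g^{-1}$: one must be sure it holds for arbitrary polynomials in $L(x)$ and $R(x)$, including constant terms (for which conjugation acts trivially), and that the reparametrization $x\mapsto g^{-1}x$ correctly matches the two vanishing conditions. Alternatively, using Lemma \ref{2.7} one can argue directly: writing the degeneration as $\lim_{t\to 0}g_t\kringel\la=\mu$, the same equivariance gives $T_{g_t\kringel\la}(x)=g_t\,T_\la(g_t^{-1}x)\,g_t^{-1}=0$ for every $t$, and letting $t\to 0$ yields $T_\mu(x)=0$ by continuity of $T$ in the structure constants.
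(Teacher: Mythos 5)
Your argument is correct: over $\C$ the identical vanishing of $T_\mu(x)$ in $x$ amounts to the vanishing of finitely many polynomials in the structure constants, so the locus $Z_T$ is Zariski closed, and the equivariance $T_{g\kringel\mu}(x)=g\,T_\mu(g^{-1}x)\,g^{-1}$ makes it $GL_n(\C)$-invariant, whence $\ov{O(\la)}\subseteq Z_T$. The paper itself states this lemma without proof, deferring to \cite{BU36}, and the argument there is exactly this standard one, so your proposal matches the intended proof and nothing further is needed.
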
 

This lemma will be used to show the impossibility of certain degenerations. Suppose $A$ and $B$ are
two Novikov algebras in $\mathpzc {Nov}_n(K)$, and $A$ satisfies such a polynomial operator identity 
$T(x)=0$, but $B$ does not. Then $A$ cannot degenerate to $B$. Sometimes it is easy to find such
polynomials. If $A$ is complete, we may take $T(x)=R(x)^n$. Hence $A\ra_{\rm deg}B$ implies that 
$B$ is also complete in this case. It is also easy to see that zero trace or zero determinant
of such operators is preserved under degeneration. \\
In order to find interesting operator polynomials it is useful to realize that the following 
polynomial is zero for {\it all} Novikov algebras.
\begin{lem}
Let $A$ be a Novikov algebra. Then the following operator identity holds.
\begin{align*}
[R(x),[R(x),L(x)]] & = L(x)R(x)^2-2R(x)L(x)R(x)+R(x)^2L(x) \\
 & = 0.
\end{align*}
\end{lem}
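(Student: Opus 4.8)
The plan is to verify the operator identity $L(x)R(x)^2-2R(x)L(x)R(x)+R(x)^2L(x)=0$ by applying it to an arbitrary element $z\in A$ and reducing everything to the two defining Novikov identities \eqref{nov1} and \eqref{nov2}. The cleanest route is to first rewrite each of the three summands as a triple product and then collect terms. Applying the operator to $z$ gives
\[
L(x)R(x)^2 z = x\cdot((z\cdot x)\cdot x),\quad
R(x)L(x)R(x) z = ((x\cdot(z\cdot x))\cdot x,\quad
R(x)^2 L(x) z = ((x\cdot z)\cdot x)\cdot x,
\]
so I must show $x\cdot((z\cdot x)\cdot x)-2(x\cdot(z\cdot x))\cdot x+((x\cdot z)\cdot x)\cdot x=0$.

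First I would use the right-symmetry identity \eqref{nov2} in the form $(u\cdot v)\cdot w=(u\cdot w)\cdot v$ to normalize the placements of the two outer factors of $x$. In particular, \eqref{nov2} applied to the last term yields $((x\cdot z)\cdot x)\cdot x=((x\cdot x)\cdot z)$-type rearrangements, and more usefully it lets me commute the trailing right-multiplications so that the three terms become comparable. The key observation I expect to exploit is that the left-symmetry identity \eqref{nov1} says precisely that the associator $(u,v,w):=(u\cdot v)\cdot w-u\cdot(v\cdot w)$ is symmetric in its first two arguments; writing the combination above in terms of associators should make the cancellation transparent.

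Concretely, I would introduce the commutator-of-operators notation to mirror the statement: set $D=[R(x),L(x)]=R(x)L(x)-L(x)R(x)$ and observe that the claimed identity is exactly $[R(x),D]=0$, i.e. $R(x)$ commutes with $[R(x),L(x)]$. So the real content is that $R(x)D=DR(x)$. Expanding $R(x)D-DR(x)=R(x)L(x)R(x)-L(x)R(x)^2-R(x)^2 L(x)+R(x)L(x)R(x)$ recovers the displayed trinomial up to sign, confirming the two lines of the statement agree; then I reduce $R(x)D z$ and $DR(x) z$ to triple products and match them term by term using \eqref{nov1} to swap the first two slots and \eqref{nov2} to swap the last two slots.

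The main obstacle is purely bookkeeping: one must be careful that identity \eqref{nov1} only permits swapping the \emph{first two} arguments of a triple product while \eqref{nov2} only permits swapping the \emph{last two}, and a naive expansion produces several intermediate monomials such as $x\cdot(x\cdot(z\cdot x))$ and $(x\cdot x)\cdot(z\cdot x)$ that do not cancel until both identities have been applied in the right sequence. I expect that specializing \eqref{nov1} to $y=x$ (so that \eqref{nov1} degenerates to the associativity-type relation $(x\cdot x)\cdot z-x\cdot(x\cdot z)= (x\cdot x)\cdot z - x\cdot(x\cdot z)$, which becomes the statement that $L(x)$ commutes with the associator in a controlled way) is what forces the final cancellation, and the whole computation then closes in a handful of lines.
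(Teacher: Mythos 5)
Your overall framing is sound---recognizing the claim as $[R(x),[R(x),L(x)]]=0$, applying the operator to an arbitrary $z$, and reducing to the two defining identities is a workable plan, and the paper's own proof is a streamlined version of exactly this idea. But as written there is a genuine gap: the cancellation is never actually carried out, and the one concrete step you single out as decisive is vacuous. Specializing \eqref{nov1} to $y=x$ gives $(x\cdot x)\cdot z-x\cdot(x\cdot z)=(x\cdot x)\cdot z-x\cdot(x\cdot z)$, i.e.\ the tautology $0=0$ (you even write it out in this form without noticing); it carries no information and cannot force any cancellation. The substitution you actually need is in the \emph{third} slot: taking $z=x$ in \eqref{nov1} gives $(x\cdot y)\cdot x-x\cdot(y\cdot x)=(y\cdot x)\cdot x-y\cdot(x\cdot x)$, which is the operator identity
\[
[R(x),L(x)]=R(x)^2-R(x\cdot x).
\]
This is the missing idea, and it is the first line of the paper's proof.

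Once you have it, your reformulation of the claim as $[R(x),D]=0$ with $D=[R(x),L(x)]$ closes immediately: \eqref{nov2} says precisely that all right multiplications commute, $[R(x),R(y)]=0$, so
\[
[R(x),[R(x),L(x)]]=[R(x),R(x)^2]-[R(x),R(x\cdot x)]=0.
\]
If you prefer to stay at the level of triple products, the same substitution does the work: $Dw=(w\cdot x)\cdot x-w\cdot(x\cdot x)$, hence $R(x)Dz$ and $DR(x)z$ both equal $((z\cdot x)\cdot x)\cdot x$ minus, respectively, $(z\cdot(x\cdot x))\cdot x$ and $(z\cdot x)\cdot(x\cdot x)$, and these two correction terms agree by \eqref{nov2}. (Incidentally, your expansion of $R(x)D-DR(x)$ has a sign error---it equals $R(x)^2L(x)-2R(x)L(x)R(x)+L(x)R(x)^2$, not its negative---though this is harmless for an identity asserting vanishing.) Your general toolbox, the associator being symmetric in its first two arguments and right multiplications commuting, is adequate in principle, but the proposal does not identify the substitution that makes it work, so it does not yet constitute a proof.
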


\begin{proof}
From \eqref{nov1} we obtain the operator identity $[R(x),L(x)]=R(x)^2-R(x\cdot x)$. By
\eqref{nov2} we have $[R(x),R(y)]=0$. Hence it follows
$[R(x),[R(x),L(x)]] = [R(x),R(x)^2]-[R(x),R(x\cdot x)] = 0$.
\end{proof}

Given a Novikov algebra $(A,\cdot )$ we have defined the associated Lie algebra $\Lg_A$
by $[x,y]=x\cdot y-y\cdot x$. We can also associate a commutative algebra $J_A$ by
$x\circ y=x\cdot y+y\cdot x$. This algebra however may not be associative in general.
If it is, then it is again a Novikov algebra. The following result is easy to show, see \cite{BU36}:

\begin{lem}\label{inv5}
If $A \ra_{\rm deg} B$ then $\Lg_A \ra_{\rm deg} \Lg_B$ and $J_A \ra_{\rm deg} J_B$.
\end{lem}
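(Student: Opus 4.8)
The plan is to realize both $\Lg_A$ and $J_A$ as the images of $A$ under fixed $GL_n(\C)$-equivariant linear maps on the space of all bilinear products, and then to invoke the general fact that equivariant regular maps carry orbit closures into orbit closures. Write $W=\Hom(V\otimes V,V)$ for the space of bilinear products on $V$, equipped with the same action $(g\kringel\mu)(x,y)=g(\mu(g^{-1}x,g^{-1}y))$, and define linear maps $\pi_-,\pi_+\colon W\to W$ by
\[
\pi_-(\mu)(x,y)=\mu(x,y)-\mu(y,x),\qquad \pi_+(\mu)(x,y)=\mu(x,y)+\mu(y,x).
\]
If $\la$ represents $A$, then $\pi_-(\la)$ is by definition the bracket of $\Lg_A$ and $\pi_+(\la)$ is the product of $J_A$, and likewise $\pi_\mp$ applied to a structure $\mu$ representing $B$ yields $\Lg_B$ and $J_B$.

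First I would check the equivariance $\pi_\pm(g\kringel\mu)=g\kringel\pi_\pm(\mu)$, which is immediate since $g$ acts linearly:
\[
\pi_-(g\kringel\mu)(x,y)=g\bigl(\mu(g^{-1}x,g^{-1}y)\bigr)-g\bigl(\mu(g^{-1}y,g^{-1}x)\bigr)=g\bigl(\pi_-(\mu)(g^{-1}x,g^{-1}y)\bigr),
\]
and identically for $\pi_+$. Hence $\pi_\pm$ maps the orbit $O(\la)$ onto the orbit $O(\pi_\pm(\la))$.

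Next I would pass to Zariski closures. Being linear, each $\pi_\pm$ is a regular, hence continuous, self-map of $W$, so it sends the closure of an orbit into the closure of the image orbit. From $\mu\in\ov{O(\la)}$ together with $\pi_\pm(O(\la))=O(\pi_\pm(\la))$ I then obtain
\[
\pi_\pm(\mu)\in\pi_\pm\bigl(\ov{O(\la)}\bigr)\subseteq\ov{\pi_\pm(O(\la))}=\ov{O(\pi_\pm(\la))},
\]
which reads as $\Lg_A\ra_{\rm deg}\Lg_B$ for the sign $-$ and as $J_A\ra_{\rm deg}J_B$ for the sign $+$. Alternatively, using Lemma~\ref{2.7} I could fix an explicit contraction $\lim_{t\to0}g_t\kringel\la=\mu$ and apply $\pi_\pm$ to it, concluding $\lim_{t\to0}g_t\kringel\pi_\pm(\la)=\pi_\pm(\mu)$ by continuity and equivariance.

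There is no genuinely hard step; the only thing I would be careful about is that these inclusions are degenerations inside the correct varieties. This is automatic, because $A$ and $B$ are Novikov and hence their commutators obey the Jacobi identity, so $\pi_-(\la),\pi_-(\mu)$ lie in the variety of Lie algebra structures, while $\pi_+(\la),\pi_+(\mu)$ lie in the variety of commutative algebra structures; both varieties are Zariski closed and stable under the $GL_n(\C)$-action, so the displayed containments are honest degenerations of Lie algebras, respectively of commutative algebras.
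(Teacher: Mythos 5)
Your proof is correct. The paper itself does not prove this lemma but defers it to \cite{BU36}; your argument --- that $\mu\mapsto\mu\mp\mu^{op}$ is a linear, $GL_n(\C)$-equivariant (hence orbit-preserving and Zariski-continuous) self-map of $\Hom(V\otimes V,V)$, so it carries $\ov{O(\la)}$ into $\ov{O(\pi_\pm(\la))}$ --- is exactly the standard argument intended there, and your closing remark that the images land in the closed subvarieties of Lie, respectively commutative, structures correctly disposes of the only subtlety.
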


\begin{defi}
Let  $\al,\be,\ga \in \C$ and define $\Der_{(\al,\be,\ga)}(A)$ to be the space of all 
$ D\in \End (A)$ satisfying
\[
\al D(x\cdot y)=\be D(x)\cdot y+\ga x \cdot D(y)
\]
for all $x,y \in A$.
We call the elements $D\in\Der_{(\al,\be,\ga)}(A)$ sometimes $(\al,\be,\ga)$-derivations.
\end{defi}

For $(\al,\be,\ga)=(1,1,1)$ we obtain the usual derivation algebra $\Der (A)$.

\begin{lem}\label{inv3}
If $A\ra_{\rm deg}B$, then
$\dim \Der_{(\al,\be,\ga)}(A) \le \dim \Der_{(\al,\be,\ga)}(B)$ for all $\al,\be,\ga \in \C$.
For $(\al,\be,\ga)=(1,1,1)$ it follows even $\dim \Der (A) < \dim \Der (B)$.
\end{lem}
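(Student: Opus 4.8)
The plan is to treat the two assertions separately: the weak inequality holds for \emph{all} parameters $(\al,\be,\ga)$ by a semicontinuity argument, whereas the strict inequality for $(\al,\be,\ga)=(1,1,1)$ rests on the special role of $\Der(A)$ as the Lie algebra of the stabilizer of the structure in $GL_n(\C)$.

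For the weak inequality, I would realize $\Der_{(\al,\be,\ga)}$ as the kernel of a linear map depending on the structure. Identifying a structure representing $A$ with a bilinear map $\mu\in\mathpzc{Nov}_n(\C)$, define for each $\mu$ the linear operator
\[
\Phi_\mu : \End(V) \to \Hom(V\otimes V, V), \quad \Phi_\mu(D)(x,y) = \al\, D(\mu(x,y)) - \be\, \mu(D(x),y) - \ga\, \mu(x,D(y)).
\]
By definition $\Der_{(\al,\be,\ga)}(\mu)=\ker\Phi_\mu$, so $\dim\Der_{(\al,\be,\ga)}(\mu)=n^2-\rank\Phi_\mu$. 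In the fixed basis the matrix of $\Phi_\mu$ has entries that are linear in the structure constants $c_{ij}^k$ of $\mu$; hence for each $r$ the locus $\{\mu:\rank\Phi_\mu\le r\}$ is Zariski-closed, being cut out by the vanishing of all $(r{+}1)$-minors. This means $\mu\mapsto\dim\ker\Phi_\mu$ is upper-semicontinuous. A direct substitution shows the equivariance $\Der_{(\al,\be,\ga)}(g\kringel\mu)=g\,\Der_{(\al,\be,\ga)}(\mu)\,g^{-1}$: if $D$ is an $(\al,\be,\ga)$-derivation of $\mu$, then $gDg^{-1}$ is one of $g\kringel\mu$. Consequently the dimension is constant along each orbit. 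Setting $c=\dim\Der_{(\al,\be,\ga)}(A)$, the orbit $O(\la)$ of a structure $\la$ representing $A$ lies in the closed set $\{\mu:\dim\ker\Phi_\mu\ge c\}$, hence so does $\ov{O(\la)}$; since the structure representing $B$ lies there, $\dim\Der_{(\al,\be,\ga)}(B)\ge c$, as required.

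For the strict inequality with $(\al,\be,\ga)=(1,1,1)$, I would pass to the orbit-dimension formula. The stabilizer $\mathrm{Stab}(\la)$ of $\la$ in $GL_n(\C)$ is precisely the automorphism group $\mathrm{Aut}(A)$, an algebraic group whose Lie algebra is $\Der(A)$; hence $\dim\mathrm{Stab}(\la)=\dim\Der(A)$ and, via the orbit map $g\mapsto g\kringel\la$, $\dim O(A)=n^2-\dim\Der(A)$. If $A\ra_{\rm deg}B$ is proper, the structure $\mu$ representing $B$ lies in $\partial O(\la)=\ov{O(\la)}\setminus O(\la)$. Since an orbit is open in its closure, this boundary is a closed $GL_n(\C)$-invariant set of dimension strictly smaller than $\dim O(\la)$, so every orbit it contains, in particular $O(B)$, satisfies $\dim O(B)<\dim O(A)$. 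Plugging into the formula gives $n^2-\dim\Der(B)<n^2-\dim\Der(A)$, i.e.\ $\dim\Der(A)<\dim\Der(B)$.

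The main obstacle, and the reason the strict inequality is special to $(1,1,1)$, is the identification of $\Der(A)$ with the Lie algebra of the stabilizer: this is what links the algebraic invariant $\dim\Der$ to the geometric orbit dimension, and there is no analogous group interpretation of $\Der_{(\al,\be,\ga)}$ for general parameters. Once this identification and the standard fact that boundary orbits drop in dimension are in place, the strict inequality follows at once.
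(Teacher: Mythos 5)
Your proof is correct and is exactly the standard argument (semicontinuity of $\dim\ker\Phi_\mu$ on the orbit closure for the weak inequality, and the identification $\dim O(A)=n^2-\dim\Der(A)$ together with the fact that boundary orbits drop in dimension for the strict one); the paper itself defers the proof to its reference \cite{BU36}, which proceeds the same way. Your insertion of the properness hypothesis for the strict inequality is the correct reading of the statement, since for an improper degeneration $A\cong B$ the derivation dimensions coincide, and the paper states the strict version for proper degenerations elsewhere in the text.
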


\begin{defi}
Denote the left and right annihilator of $A$ by
\begin{align*}
\CL (A) & = \{ x\in A\mid x\cdot A=0 \}, \\
\CR (A) & = \{ x\in A\mid A\cdot x=0 \}.
\end{align*}
\end{defi}

We have the following result. 

\begin{lem}\label{inv4}
If $A \ra_{\rm deg} B$ then 
\begin{align*}
\dim A\cdot A & \ge \dim B\cdot B \\ 
\dim \CL (A) & \le \dim \CL (B) \\
\dim \CR (A) & \le \dim \CR (B) 
\end{align*}
\end{lem}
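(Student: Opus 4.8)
The plan is to recognize all three quantities as ranks (or coranks) of linear maps whose matrix entries are polynomial in the structure constants, and then to invoke the lower semicontinuity of rank together with the fact that each quantity is an isomorphism invariant, hence constant along the $GL_n(K)$-orbit.

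First I would treat $\dim A\cdot A$. Viewing the product $\la$ as a linear map $m_\la\colon V\otimes V\to V$ with $m_\la(x\otimes y)=x\cdot y$, we have $A\cdot A=\im m_\la$, so $\dim A\cdot A=\rank m_\la$. In the fixed basis the matrix of $m_\la$ has entries the structure constants $c_{ij}^k$. Setting $r_0=\dim A\cdot A$, the locus $Z=\{\nu\in\mathpzc{Nov}_n(K):\rank m_\nu\le r_0\}$ is Zariski closed, being cut out by the vanishing of all $(r_0+1)\times(r_0+1)$ minors of $m_\nu$, which are polynomials in the structure constants. Every structure in $O(\la)$ represents an algebra isomorphic to $A$ and therefore lies in $Z$, so $\ov{O(\la)}\subseteq Z$. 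Since $\mu\in\ov{O(\la)}$ represents $B$, this gives $\dim B\cdot B=\rank m_\mu\le r_0=\dim A\cdot A$.

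The two annihilator statements follow from the dual form of the same principle. For the left annihilator, consider the linear map $\ell_\la\colon V\to\End(V)$, $x\mapsto L(x)$; then $\CL(A)=\ker\ell_\la$ and $\dim\CL(A)=n-\rank\ell_\la$. The matrix of $\ell_\la$ is an $n^2\times n$ matrix with entries linear in the $c_{ij}^k$. Putting $k_0=\dim\CL(A)$, the locus $W=\{\nu:\dim\ker\ell_\nu\ge k_0\}=\{\nu:\rank\ell_\nu\le n-k_0\}$ is again Zariski closed and contains $O(\la)$, hence $\ov{O(\la)}$, so $\dim\CL(B)=\dim\ker\ell_\mu\ge k_0=\dim\CL(A)$. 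Replacing $L(x)$ by $R(x)$ and $\ell_\la$ by $r_\la\colon x\mapsto R(x)$ yields $\dim\CR(A)\le\dim\CR(B)$ verbatim.

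The argument is uniform, so there is no serious obstacle; the only points requiring care are bookkeeping ones. I must make sure the semicontinuity runs in the correct direction, namely that $\{\rank\le r\}$ is the closed locus, so that rank can only drop and the kernel can only grow under passage to the orbit closure, and I must confirm that each of $\dim A\cdot A$, $\dim\CL(A)$ and $\dim\CR(A)$ really is an isomorphism invariant, so that it is genuinely constant on the orbit rather than merely bounded there. Alternatively, one could argue along an explicit sequential contraction $g_t\kringel\la\to\mu$ from Lemma~\ref{2.7} and use continuity of rank in the parameter $t$, but the closed-locus formulation above is cleaner and avoids choosing coordinates for $g_t$.
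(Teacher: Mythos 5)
Your proof is correct: each of $\dim A\cdot A$, $\dim\CL(A)$, $\dim\CR(A)$ is indeed the rank (or corank) of a linear map whose matrix entries are polynomial in the structure constants, it is constant on the orbit because it is an isomorphism invariant, and the locus $\{\rank\le r\}$ is Zariski closed, so the inequalities pass to the orbit closure in the direction you state. The paper itself gives no proof of Lemma~\ref{inv4}, deferring to \cite{BU36}, and the argument there is this same semicontinuity observation, so your approach matches the intended one.
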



\section{Classification of 3-dimensional Novikov algebras}

A classification of complex $3$-dimensional Novikov algebras can be found in \cite{BAI}. 
However it is not best suited for the purpose of classifying its degenerations.
For this reason we have done an independent classification which is more adapted to
degenerations. More precisely, we first fix a complex $3$-dimensional Lie algebra  
and then classify all Novikov algebras having this Lie algebra as associated Lie algebra. 
This is useful since the Novikov algebra degenerations are a refinement of the associated
Lie algebra degenerations. Secondly, we decrease the number of different families
of Novikov algebras by realizing several algebras as special cases of one family of algebras.
For details on the method of our classification see \cite{BU42}. It should be mentioned that
our classification is equivalent with the one given in \cite{BAI}. We can give an explicit
correspondence. Since the associated Lie algebra of a Novikov algebra must be solvable \cite{BU22}, 
we start with Ja\-cob\-sons list of complex $3$-dimensional, solvable Lie algebras:
\vspace*{0.5cm}
\begin{center}
\begin{tabular}{c|c|c}
$\Lg$ & Lie brackets & $\dim \Der (\Lg)$\\
\hline
$\C^3$ &  $-$ & $9$ \\
\hline
$\Ln_3(\C)$ & $[e_1,e_2]=e_3$ & $6$ \\
\hline
$\Lr_2(\C) \oplus  \C$ & $[e_1,e_2]=e_2$ & $4$ \\
\hline
$\Lr_3(\C)$ & $ [e_1,e_2]=e_2, [e_1,e_3]=e_2+e_3 $ & $4$ \\
\hline
$\Lr_{3,\la}(\C)$ & $[e_1,e_2]=e_2, [e_1,e_3]=\la e_3$ & $4, \la\neq 1$ \\
$\la\neq 0$ & & $6, \la=1$ \\
\end{tabular}
\end{center}
\vspace*{0.5cm}
Among the family $\Lr_{3,\la}(\C)$ we still have isomorphisms. In fact,
$\Lr_{3,\la}(\C)\cong \Lr_{3,\ov{\la}}(\C)$ if and only if $\ov{\la}=\la^{-1}$ or $\ov{\la}=\la$.
We could restrict the parameter $\la\in \C^{\times}$ accordingly in order to avoid repetitions.
It will be more convenient however, to write just $\la\in \C^{\times}$, and keep in mind the
above isomorphisms. Here is our list of complex $3$-dimensional Novikov algebras:
\vspace*{0.5cm}
\begin{center}
\begin{tabular}{c|c|c}
$A$ & Products & $\Lg_A$ \\
\hline
$A_1$ & $-$ & $\C^3$ \\
\hline
$A_2$ & $e_3\cdot e_3=e_3.$ & $\C^3$ \\
\hline
$A_3$ & $e_2\cdot e_2=e_2,\; e_3\cdot e_3=e_3.$  &  $\C^3$ \\
\hline
$A_4$ & $e_1\cdot e_1=e_1,\;e_2\cdot e_2=e_2,\;e_3\cdot e_3=e_3.$ &  $\C^3$ \\
\hline
$A_5$ & $e_2\cdot e_2=e_1.$  & $\C^3$ \\
\hline
$A_6$ & $e_2\cdot e_2=e_1,\; e_3\cdot e_3=e_3.$  & $\C^3$ \\
\hline
$A_7$ & $e_1\cdot e_2=e_1,\; e_2\cdot e_1=e_1,\;e_2\cdot e_2=e_2.$ & $\C^3$ \\
\hline
$A_8$ & $e_1\cdot e_2=e_1,\; e_2\cdot e_1=e_1,\;e_2\cdot e_2=e_2,\;e_3\cdot e_3=e_3.$ & $\C^3$  \\
\hline
$A_9$ & $e_2\cdot e_3=e_1,\; e_3\cdot e_2=e_1.$ & $\C^3$ \\
\hline
$A_{10}$ & $e_2\cdot e_3=e_1,\; e_3\cdot e_2=e_1,\;e_3\cdot e_3=e_2.$ & $\C^3$ \\
\hline
$A_{11}$ & $e_1\cdot e_3=e_1,\; e_2\cdot e_3=e_2,\; e_3\cdot e_1=e_1,$ & $\C^3$ \\
        & $e_3\cdot e_2=e_2,\;e_3\cdot e_3=e_3.$ & \\
\hline
$A_{12}$ & $e_1\cdot e_3=e_1,\;e_2\cdot e_2=e_1,\;e_2\cdot e_3=e_2,$ & $\C^3$ \\
        & $e_3\cdot e_1=e_1,\;e_3\cdot e_2=e_2,\; e_3\cdot e_3=e_3.$ &  \\
\hline
$B_1$ & $e_1\cdot e_1=e_1,\; e_1\cdot e_2=e_2+e_3,$ & $\Ln_3(\C)$ \\
      & $e_1\cdot e_3=e_3,\; e_2\cdot e_1=e_2,\;e_3\cdot e_1=e_3.$ & \\
\hline
$B_2$ & $e_1\cdot e_1=e_1,\; e_1\cdot e_2=e_2+e_3,\;e_1\cdot e_3=e_3,$ & $\Ln_3(\C)$ \\
      & $e_2\cdot e_1=e_2,\;e_2\cdot e_2=e_3,\; e_3\cdot e_1=e_3.$ & \\
\hline
$B_3$ & $e_1\cdot e_2=\frac{1}{2}e_3,\; e_2\cdot e_1=-\frac{1}{2} e_3,\;e_2\cdot e_2=e_3.$  & $\Ln_3(\C)$ \\
\hline
$B_4(\al)$ & $e_1\cdot e_2=\al e_3, \; e_2\cdot e_1=(\al-1)e_3,\;e_2\cdot e_2= e_1.$  & $\Ln_3(\C)$ \\
\hline
$B_5(\be)$ & $e_1\cdot e_2=\be e_3,\; e_2\cdot e_1=(\be-1) e_3.$  & $\Ln_3(\C)$ \\
\hline
$C_1$ & $e_1\cdot e_1=-e_1+e_2,\; e_2\cdot e_1=-e_2,\;e_3\cdot e_3=e_3.$  & $\Lr_2(\C)\oplus \C$ \\
\hline
$C_2$ & $e_1\cdot e_1=-e_1+e_2,\; e_1\cdot e_3=-e_3,$  & $\Lr_2(\C)\oplus \C$ \\
      &  $e_2\cdot e_1=-e_2,\; e_3\cdot e_1=-e_3.$ & \\
\hline
$C_3$ & $e_1\cdot e_1=-e_1+e_2,\; e_2\cdot e_1=-e_2.$  & $\Lr_2(\C)\oplus \C$ \\
\hline
$C_4$ & $e_1\cdot e_1=e_3,\; e_1\cdot e_2=e_2.$  & $\Lr_2(\C)\oplus \C$ \\
\hline
$C_5(\al)$ & $e_1\cdot e_1=\al e_1,\; e_1\cdot e_2=(\al+1) e_2,\;e_2\cdot e_1=\al e_2.$ 
& $\Lr_2(\C)\oplus \C$ \\
\hline
$C_6(\be)$ & $e_1\cdot e_1=\be e_1,\; e_1\cdot e_2=(\be +1) e_2,$
& $\Lr_2(\C)\oplus \C$ \\
      &  $e_2\cdot e_1=\be e_2,\; e_3\cdot e_3=e_3.$ & 
\end{tabular}
\end{center}

\begin{center}
\begin{tabular}{c|c|c}
$A$ & Products & $\Lg_A$ \\
\hline
$C_7(\ga)$ & $e_1\cdot e_1=\ga e_1,\; e_1\cdot e_2=(\ga +1) e_2,$ & $\Lr_2(\C)\oplus \C$ \\ 
$\ga \neq 0$  & $e_1\cdot e_3=\ga e_3,\;e_2\cdot e_1=\ga e_2,\;e_3\cdot e_1=\ga e_3.$  & \\
\hline
$D_1$ & $e_1\cdot e_1=-e_1+e_3,\;  e_1\cdot e_3=e_2,$ & $\Lr_3(\C)$ \\
      & $e_2\cdot e_1=-e_2,\; e_3\cdot e_1=-e_3.$ & \\
\hline
$D_2(\al)$ & $e_1\cdot e_1=\al e_1,\; e_1\cdot e_2=(\al+1) e_2,$  & $\Lr_3(\C)$ \\ 
           & $e_1\cdot e_3=e_2+(\al+1)e_3,\;e_2\cdot e_1=\al e_2,$ & \\
           & $e_3\cdot e_1=\al e_3.$ & \\
\hline
$E_{1,\la}(\al)$ & $e_1\cdot e_1=\al e_1,\; e_1\cdot e_2=(\al+1) e_2,$ & $\Lr_{3,\la}(\C)$ \\
$\la\neq 0$  & $e_1\cdot e_3=(\al+\la)e_3,\; e_2\cdot e_1=\al e_2,$  & \\
 & $e_3\cdot e_1=\al e_3.$ & \\
\hline
$E_{2,\la}$ & $e_1\cdot e_1=- e_1+e_2,\; e_1\cdot e_3=(\la -1) e_3,$  & $\Lr_{3,\la}(\C)$ \\ 
$\la\neq 0$ &  $e_2\cdot e_1=- e_2,\; e_3\cdot e_1=-e_3.$ & \\
\hline
$E_3$ & $e_1\cdot e_1=-\frac{1}{2}e_1+e_3,\; e_1\cdot e_2=\frac{1}{2}e_2,\;e_1\cdot e_3=e_2,$ &
$\Lr_{3,\frac{1}{2}}(\C)$ \\
      & $e_2\cdot e_1=-\frac{1}{2}e_2,\;e_3\cdot e_1=e_2-\frac{1}{2}e_3.$ & \\
\hline
$E_4$ & $e_1\cdot e_1=-e_1+e_2,\; e_1\cdot e_3=-\frac{1}{2}e_3,$ & $\Lr_{3,\frac{1}{2}}(\C)$ \\
      & $e_2\cdot e_1=-e_2,\;e_3\cdot e_1=-e_3,\;e_3\cdot e_3=e_2.$ & \\
\hline
$E_5(\be)$ & $e_1\cdot e_1=\be e_1,\; e_1\cdot e_2=(\be+1)e_2,$ & $\Lr_{3,\frac{1}{2}}(\C)$ \\
           & $e_1\cdot e_3=(\be+\frac{1}{2})e_3,\;e_2\cdot e_1=\be e_2,$ & \\
           & $e_3\cdot e_1=\be e_3,\; e_3\cdot e_3=e_2.$ & \\
\hline
$E_6$ & $e_1\cdot e_1=-\frac{1}{2}e_1,\; e_1\cdot e_2=\frac{1}{2}e_2,$ & $\Lr_{3,\frac{1}{2}}(\C)$ \\
      & $e_1\cdot e_3=e_2,\;e_2\cdot e_1=-\frac{1}{2}e_2,\;e_3\cdot e_1=e_2-\frac{1}{2}e_3.$ &  
\end{tabular}
\end{center}
\vspace*{0.5cm}
The isomorphisms in this list are as follows:
\begin{align*}
B_5(\be) & \simeq B_5(\ov{\be}) \text{ if and only if } \ov{\be}=\be \text{ or } \ov{\be}=1-\be,\\
E_{1,\la}(\al) & \simeq E_{1,\ov{\la}}(\ov{\al}) \text{ if and only if } 
(\ov{\la},\ov{\al})=(\la,\al) \text{ or } (\ov{\la},\ov{\al})=(1/\la,\al /\la).
\end{align*}
Sometimes we want to replace a given algebra by an isomorphic one.
In particular we might replace $E_{1,\la}(-\la)$ by $E_{1,\frac{1}{\la}}(-1)$, and
$B_5(1)$ by $B_5(0)$. \\[0.2cm]
The following table shows the $3$-dimensional Novikov algebras ordered by the dimension of its
derivation algebra:
\vspace*{0.4cm}
\begin{center}
\begin{table}[hbt]
\begin{tabular}{c|l}
$\dim \Der (A)$ & $A$ \\
\hline
$0$ & $A_4$ \\
$1$ & $A_3,\; A_8,\;B_2,\;C_1,\;C_6(\be)_{\be\neq -1},\;E_3,\;E_4,\;E_5(\be)_{\be\neq -1}$ \\
$2$ & $A_6,\;A_7,\;A_{12},\;B_1,\;C_2,\;C_3,\;C_4,\;C_5(\al)_{\al\neq 0,-1},\;C_6(-1),\;
C_7(\ga)_{\ga\neq -1},$  \\
    & $D_1,\;D_2(\al)_{\al\neq -1},\;E_{1,\la\neq 1}(\al)_{\al\neq -1,-\la},\;E_{2,\la\neq 1},\;
E_5(-1),\;E_6$ \\
$3$ & $A_{10},\;B_4(\al),\;C_5(0),\;C_5(-1),\;C_7(-1),\;D_2(-1),\;E_{1,\la\neq 1}(-1)$  \\
$4$ & $A_2,\;A_9,\;A_{11},\;B_3,\; B_5(\be)_{\be\neq \frac{1}{2}},\;E_{1,1}(\al)_{\al\neq -1},\; 
E_{2,1}$  \\
$5$ & $A_5$ \\
$6$ & $B_5(\frac{1}{2}),\;E_{1,1}(-1)$ \\
$9$ & $A_1$ 
\end{tabular}
\caption{Dimension of derivation algebra}
\end{table}
\end{center}

\section{Classification of orbit closures in dimension 3}

A degeneration $A\ra_{\rm deg}B$ of Novikov algebras induces a degeneration 
$\Lg_A\ra_{\rm deg}\Lg_B$ of their associated Lie algebras. 
The Hasse diagram of the $3$-dimensional solvable Lie algebra degenerations is given as
follows:
\vspace{0.5cm}
$$
\begin{xy}
\xymatrix{
\Lr_2(\C)\oplus \C\ar[rd]  &  \Lr_{3,\la\neq 1}(\C)\ar[d]  & \Lr_3(\C)\ar[ld]\ar[d] \\
  & \Ln_3(\C)\ar[d] &  \Lr_{3,1}(\C)\ar[ld] \\
 & \C^3 & 
}
\end{xy}
$$
We denote the {\it type} of a Novikov algebra by one of the letters $A,B,C,D,E_{\la}$,
according to its associated Lie algebra $\C^3,\Ln_3(\C), \Lr_2(\C)\oplus \C, \Lr_3(\C), 
\Lr_{3,\la\neq 0}(\C)$. All Novikov algebra de\-ge\-ne\-rations in dimension $3$ arise from the
above Hasse diagram as a refinement. As an example, Novikov algebra degenerations among algebras
of type $A$, symbolically denoted by $A\ra_{\rm deg}A$, refine the improper Lie algebra degeneration 
$\C^3\ra_{\rm deg}\C^3$. Novikov algebra degenerations of type $B\ra_{\rm deg}A$ refine the
degeneration $\Ln_3(\C)\ra_{\rm deg}\C^3$, and so on. 
Altogether we obtain the following list of possible $13$ types of Novikov 
algebra degenerations:
\vspace*{0.5cm}
\begin{center}
\begin{tabular}{c|l|c|l}
\phantom{ABC} & type & \phantom{ABC} & type \\
\hline
$1$ & $A\ra_{\rm deg}A$ & $8$ & $C\ra_{\rm deg}A$ \\
$2$ & $B\ra_{\rm deg}B$ & $9$ & $E_{\la\neq 1}\ra_{\rm deg}B$ \\
$3$ & $C\ra_{\rm deg}C$ & $10$ & $E_{\la}\ra_{\rm deg}A$ \\
$4$ & $D\ra_{\rm deg}D$ & $11$ & $D\ra_{\rm deg}B$ \\
$5$ & $E_{\la}\ra_{\rm deg}E_{\la}$ & $12$ & $D\ra_{\rm deg}E_{\la = 1}$ \\
$6$ & $C\ra_{\rm deg}B$ & $13$ & $D\ra_{\rm deg}A$ \\
$7$ & $B\ra_{\rm deg}A$ &  & 
\end{tabular}
\end{center}
\vspace*{0.5cm}
For all $13$ types the procedure is as follows:\\[0.2cm]
{\it I : Determination of impossible degenerations of a given type by using
suitable invariants.} \\[0.1cm]
{\it II : Explicit construction of all essential degenerations which are not impossible
by I}.\\[0.2cm]
The essential degenerations of all $13$ types are listed in section $6$. The matrices for 
a degeneration $A\ra_{\rm deg}B$ are chosen in such a way, that they directly 
yield the algebra $B$ from the algebra $A$ in the {\it given bases}, and not only 
a degeneration with isomorphic algebras. \\
The Hasse diagrams for all $13$ cases are listed in section $5$. The normal arrows are
reserved for the new essential degenerations in the diagram, whereas dotted arrows
are used for degenerations which have been given before.
For them we omit sometimes the parameter restrictions, if the diagram becomes too complicated.

\newpage

\begin{prop}\label{4.1}
The orbit closures for type $A\ra_{\rm deg}A$ are given as follows:
\vspace*{0.5cm}
\begin{center}
\begin{tabular}{c|c}
$A$ & $\partial (O(A))$ \\
\hline
$A_1$ & $-$    \\
$A_2$ & $A_1,\,A_5$   \\
$A_3$ & $A_1,\,A_2,\,A_5,\,A_6,\,A_7,\,A_9,\,A_{10}$    \\
$A_4$ & $A_1,\,A_2,\,A_3,\,A_5,\,A_6,\,A_7,\,A_8,\,A_9,\,A_{10},\,A_{11},\,A_{12}$    \\
$A_5$ & $A_1$  \\
$A_6$ & $A_1,\,A_2,\,A_5,\,A_9,\,A_{10}$  \\
$A_7$ & $A_1,\,A_5,\,A_9,\,A_{10}$  \\
$A_8$ & $A_1,\,A_2,\,A_5,\,A_6,\,A_7,\,A_9,\,A_{10},\,A_{11},\,A_{12}$ \\
$A_9$ & $A_1,\,A_5$ \\
$A_{10}$ & $A_1,\,A_5,\,A_9$ \\
$A_{11}$ & $A_1,\,A_5$ \\
$A_{12}$ & $A_1,\,A_5,\,A_9,\,A_{10},\,A_{11}$ 
\end{tabular}
\end{center}
\end{prop}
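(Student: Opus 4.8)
The plan is to treat this as a degeneration problem inside the subvariety of commutative structures. Since the associated Lie algebra of every type-$A$ algebra is $\C^3$, the commutator $x\cdot y-y\cdot x$ vanishes, so all of $A_1,\dots,A_{12}$ are commutative; in particular $L(x)=R(x)$, the left and right annihilators coincide, and the trace invariants satisfy $c_{i,j}=d_{i,j}$. By Lemma~\ref{inv3} a proper degeneration strictly raises $\dim\Der$, so I would first use the table of derivation dimensions to restrict, for each $A$, the possible targets to the algebras lying strictly below $A$ in that ordering; algebras sharing a row are then automatically incomparable. Two endpoints are immediate: $A_4$ has $\dim\Der=0$ and hence a dense orbit, so it must degenerate to every other algebra, while $A_1$ (zero product) is a fixed point lying in every orbit closure.

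For the impossibility part (step I) I would show that, among the remaining higher-$\dim\Der$ candidates, the non-listed ones are excluded by just three invariants. First, $\dim A\cdot A\ge\dim B\cdot B$ from Lemma~\ref{inv4}, together with the values $\dim A\cdot A=2$ for $A_3,A_6,A_7$ and $\dim B\cdot B=3$ for $A_{11},A_{12}$, removes $A_{11}$ as a target of $A_3,A_6,A_7$ and $A_{12}$ as a target of $A_3$. Second, the trace invariant $c_{1,1}$ of Lemma~\ref{inv1}, which reads off the multiplicity pattern of the spectrum of $L(x)$, separates the idempotent algebras: one computes $c_{1,1}=1$ for $A_2$ and $A_6$, $c_{1,1}=2$ for $A_7$, and $c_{1,1}=3$ for $A_{11},A_{12}$; since these exist, this rules out $A_7\ra_{\rm deg}A_2$ and $A_{12}\ra_{\rm deg}A_2$. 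Third, completeness is a degeneration invariant (apply Lemma~\ref{inv2} to the operator identity $R(x)^3=0$): the nilpotent algebra $A_{10}$ is complete whereas $A_2$ and $A_{11}$ are not, which rules out $A_{10}\ra_{\rm deg}A_2$ and $A_{10}\ra_{\rm deg}A_{11}$. A row-by-row check confirms that these three criteria together eliminate exactly the higher-$\dim\Der$ algebras absent from each row of the table, so that no recourse to Borel's lemma is needed for type $A$.

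For the construction part (step II), Lemma~\ref{2.7} reduces everything to exhibiting, for each listed target, a one-parameter family $g_t\in GL_3(\C(t))$ with $\lim_{t\to0}g_t\kringel\la_A=\la_B$. These families are products of diagonal and unipotent rescalings that either send an unwanted structure constant to $0$ or turn an idempotent generator into a square-zero one; for instance $g_t=\diag(t^{-1},1,1)$ realizes $A_4\ra_{\rm deg}A_3$, and the analogous scaling $\diag(t,1,1)$ realizes $A_6\ra_{\rm deg}A_2$. I would write down matrices only for the essential arrows and obtain the remaining entries of each $\partial(O(A))$ by transitivity of the partial order.

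The main obstacle is the small set of impossibilities in which the coarse invariants give no information: for $A_7\ra_{\rm deg}A_2$, $A_{12}\ra_{\rm deg}A_2$, $A_{10}\ra_{\rm deg}A_2$ and $A_{10}\ra_{\rm deg}A_{11}$ the derivation dimension, $\dim A\cdot A$ and the annihilator dimensions are all compatible, so the argument must rest on the finer trace invariant $c_{1,1}$ and on completeness, and one has to verify that these finer invariants genuinely exist (that the relevant denominators are nonzero polynomials) for \emph{both} algebras involved before Lemma~\ref{inv1} may be applied. The remaining, more mechanical difficulty is bookkeeping: checking that each chosen $g_t$ yields the target algebra in the \emph{given} basis, and confirming that once the essential arrows are closed under transitivity one recovers precisely the boundaries listed in the statement, with no arrow missing and none spurious.
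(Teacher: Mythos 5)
Your proposal follows the paper's proof essentially verbatim: order the algebras by derivation dimension, exclude exactly the eight problematic arrows using $\dim A\cdot A$, the trace invariant $c_{1,1}$, and completeness (the paper rules out $A_{10}\ra_{\rm deg}A_{11}$ via $\dim A\cdot A$ rather than completeness, but both work), and realize the essential degenerations by explicit diagonal/unipotent matrices plus transitivity. The only loose point is the aside that $\dim\Der(A_4)=0$ forces $O(A_4)$ to be dense --- that would require knowing the relevant commutative locus is irreducible of dimension $9$ --- but it is not load-bearing, since $A_4\ra_{\rm deg}A_3$, $A_4\ra_{\rm deg}A_8$ and transitivity already yield every entry of that row.
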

\vspace*{0.5cm}
\begin{proof}
The algebras $A_1$ to $A_{12}$ are ordered according to their
derivation algebra dimension in table $1$. Degeneration arrows can only go downwards.
The Hasse diagram for this type in section $5$ shows the possible degenerations.
For the essential degenerations the degeneration matrices are listed in section $6$,
from number $6.1$ to number $6.17$. For example, the degeneration $A_4\ra_{\rm deg} A_3$
is obtained as follows. With $g_t^{-1}=\diag (t,1,1)$ and $g_t=\diag (t^{-1},1,1)$
we define $(x\cdot y)_t=g_t(g_t^{-1}(x)\cdot g_t^{-1}(y))$ for $x,y\in A_4$. 
This yields Novikov algebra structures which lie in the orbit of $A_4$.
The algebra $A_4$ has the basis $(e_1,e_2,e_3)$ with products $e_1\cdot e_1=e_1$, $e_2\cdot e_2=e_2$
and $e_3\cdot e_3=e_3$. It follows that $(e_1\cdot e_1)_t=te_1$, $(e_2\cdot e_2)_t=e_2$
and $(e_3\cdot e_3)_t=e_3$. Taking the limit $t\to 0$ we obtain exactly the product for
the algebra $A_3$. \\
It remains to show that all other degenerations, which do not follow from the degenerations
$1$ to $17$, are not possible. First, $A_3$ cannot degenerate to $A_{12}$ or $A_{11}$, because
of $\dim (A_3\cdot A_3)=2$ and $\dim (A_{12}\cdot A_{12})=\dim (A_{11}\cdot A_{11})=3$ and
lemma $\ref{inv4}$. For the same reason $A_6$, $A_7$ and $A_{10}$ cannot degenerate to $A_{11}$.
Furthermore $A_7$ cannot degenerate to $A_2$ because $c_{1,1}(A_7)=2$, but $c_{1,1}(A_2)=1$,
see lemma $\ref{inv1}$. Also $A_{12}$ cannot degenerate to $A_2$ because $c_{1,1}(A_{12})=3$.
Finally $A_{10}$ cannot degenerate to $A_2$ because $A_{10}$ is complete and $A_2$ is not.
Here we may use lemma $\ref{inv2}$ with the operator polynomial $T(x)=R(x)^3$.
\end{proof}

\begin{prop}\label{4.2}
The orbit closures for type $B\ra_{\rm deg}B$ are given as follows:
\vspace*{0.5cm}
\begin{center}
\begin{tabular}{c|c}
$A$ & $\partial (O(A))$ \\
\hline
$B_1$ & $B_4(0),\,B_5(0)$    \\
$B_2$ & $B_1,\,B_3,\,B_4(\al),\,B_5(\be)$   \\
$B_3$ & $B_5(\frac{1}{2})$    \\
$B_4(\al)_{\al\neq \frac{1}{2}}$ & $B_5(\al)_{\al\neq \frac{1}{2}}$  \\
$B_4(\frac{1}{2})$ & $B_3,\,B_5(\frac{1}{2})$  \\
$B_5(\be)$ & $-$
\end{tabular}
\end{center}
\end{prop}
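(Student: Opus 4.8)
The plan is to carry out the paper's two-step scheme for the six families $B_1,B_2,B_3,B_4(\al),B_5(\be)$, keeping in mind that every algebra of type $B$ has $\Lg_A=\Ln_3(\C)$, so that Lemma~\ref{inv5} applied to the associated Lie algebra is vacuous and all discriminating power must come from finer invariants. First I would fix the skeleton. Ordering the algebras by $\dim\Der$ --- $B_2$ at $1$, $B_1$ at $2$, $B_4(\al)$ at $3$, $B_3$ and $B_5(\be)_{\be\neq\frac{1}{2}}$ at $4$, and $B_5(\frac{1}{2})$ at $6$ --- Lemma~\ref{inv3} forces a proper degeneration to strictly increase this number, which at once discards every arrow inside one row (such as $B_4(\al)\ra_{\rm deg}B_4(\al')$, $B_5(\be)\ra_{\rm deg}B_5(\be')$, and $B_3\ra_{\rm deg}B_5(\be)_{\be\neq\frac{1}{2}}$ and its reverse) and every upward arrow. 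The product invariant $\dim A\cdot A$, equal to $3$ for $B_1,B_2$, to $2$ for $B_4(\al)$, and to $1$ for $B_3,B_5(\be)$, together with Lemma~\ref{inv4} forbids raising $\dim A\cdot A$; and since $B_3,B_4(\al),B_5(\be)$ are complete while $B_1,B_2$ are not, Lemma~\ref{inv2} with $T(x)=R(x)^3$ removes the arrows from complete to non-complete algebras.

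Next I would realize the essential degenerations as sequential contractions $g_t$ (Lemma~\ref{2.7}), to be tabulated in section~$6$: $B_2\ra_{\rm deg}B_1$, the one-parameter family $B_2\ra_{\rm deg}B_4(\al)$, $B_1\ra_{\rm deg}B_4(0)$, the family $B_4(\al)_{\al\neq\frac{1}{2}}\ra_{\rm deg}B_5(\al)$, $B_4(\frac{1}{2})\ra_{\rm deg}B_3$, and $B_3\ra_{\rm deg}B_5(\frac{1}{2})$. Transitivity then reproduces exactly the claimed boundaries: $\partial O(B_2)$ becomes the whole list (reaching every $B_4(\al)$ directly, every $B_5(\be)_{\be\neq\frac{1}{2}}$ through $B_4(\al)$, and $B_3$ and $B_5(\frac{1}{2})$ through $B_4(\frac{1}{2})$); $B_1$ reaches $B_4(0)$ and hence $B_5(0)$; $B_4(\frac{1}{2})$ reaches $B_5(\frac{1}{2})$ through $B_3$; and $B_5(\be)$, already at maximal $\dim\Der$ in its row, reaches nothing.

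The genuinely hard part is to exclude the arrows that pass all of these discrete tests but are still false: essentially, $B_4(\al)\ra_{\rm deg}B_5(\be)$ unless $B_5(\be)\cong B_5(\al)$, all arrows from $B_4(\al)_{\al\neq\frac{1}{2}}$ to the algebras $B_3$ and $B_5(\frac{1}{2})$, all arrows from $B_1$ except to $B_4(0)$ and $B_5(0)$, and $B_5(\be)_{\be\neq\frac{1}{2}}\ra_{\rm deg}B_5(\frac{1}{2})$. The trace invariants $c_{i,j},d_{i,j}$ are of no help here, because all operators $L(x),R(x)$ are nilpotent and every such trace vanishes. I therefore expect to introduce a continuous invariant tailored to the Heisenberg structure. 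Splitting the product as $x\cdot y=\om^{+}(x,y)+\om^{-}(x,y)$ into symmetric and antisymmetric parts, the antisymmetric part is the fixed nondegenerate bracket into the intrinsic centre $Z=[\Lg_A,\Lg_A]=\C e_3$, and I would form the shape operator $(\om^{-})^{-1}\om^{+}$ on $\Lg_A/Z$; its conjugation-invariant determinant $q(A):=\det\bigl((\om^{-})^{-1}\om^{+}\bigr)$ takes the values $-(2\al-1)^2$ on $B_4(\al)$, $-(2\be-1)^2$ on $B_5(\be)$, $-1$ on $B_1$, and $0$ on $B_3$ and on $B_5(\frac{1}{2})$. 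As a $GL_3(\C)$-invariant regular function on the locus where $\om^{-}$ is nondegenerate, $q$ is constant on every orbit closure meeting that locus; since source and target lie in it in each problematic case, equality of $q$ forces $\be\in\{\al,1-\al\}$, i.e. $B_5(\be)\cong B_5(\al)$, and kills all the remaining arrows simultaneously.

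I expect two points to be the real obstacles. The first is to make $q$ legitimate: the product is not $Z$-valued for $B_1$ and $B_4(\al)$ (where $\dim A\cdot A>1$), so the $Z$-component of $\om^{+}$ must be extracted intrinsically, and one must argue that $q$ stays constant on the closure even though its defining locus is open and $\om^{-}$ may degenerate on the boundary; this is exactly where I would invoke Borel's closed orbit lemma to control the small orbits $O(B_3)$ and $O(B_5(\frac{1}{2}))$ sitting inside the closures. The second obstacle is a handedness phenomenon invisible to $q$: the algebras $B_4(0)$ and $B_4(1)$ are non-isomorphic yet share $q=-1$, $\dim A\cdot A=2$ and $\dim\Der=3$, so to prove $B_1\ra_{\rm deg}B_4(0)$ while $B_1\not\ra_{\rm deg}B_4(1)$ I would compare the one-sided generalized derivation spaces $\Der_{(1,1,0)}$ and $\Der_{(1,0,1)}$ (Lemma~\ref{inv3}), whose dimensions are interchanged by the left-right reflection, reinforced by the separate annihilator dimensions $\dim\CL$ and $\dim\CR$, which are $(2,1)$ for $B_4(0)$ and $(1,2)$ for $B_4(1)$.
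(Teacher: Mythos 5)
Your skeleton (derivation dimensions from table $1$, $\dim A\cdot A$, completeness via $T(x)=R(x)^3$) and your list of essential degenerations coincide with the paper's, and you have correctly isolated the hard exclusions. But the argument you build on top has two genuine gaps. First, the invariant $q=\det\bigl((\om^{-})^{-1}\om^{+}\bigr)$ is never actually constructed: for $B_4(\al)$ the symmetric part is not $Z$-valued ($\om^{+}(e_2,e_2)=e_1$), and for $B_1$ it does not even descend to $\Lg_A/Z$ (since $\om^{+}(e_1,e_3)=e_3$), so "the $Z$-component of $\om^{+}$" depends on a choice of complement to $Z=[\Lg_A,\Lg_A]$ and you give no intrinsic definition; nor do you show that $q$ is a regular $GL_3(\C)$-invariant function on a suitable $GL_3(\C)$-stable locally closed locus containing both source and target, which is what the constancy-on-closures argument needs. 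Borel's closed orbit lemma does not supply either point; you flag both as "obstacles" but a proof must resolve them. Second, and decisively, $q$ cannot separate $B_1$ from $B_4(1)$ (both give $-1$), and the backup invariants you name demonstrably fail there: $\dim\CL(B_1)=\dim\CR(B_1)=0$, so Lemma \ref{inv4} imposes no constraint against $(\dim\CL,\dim\CR)=(1,2)$ for $B_4(1)$; and a direct computation gives $\dim\Der_{(1,1,0)}(B_1)=\dim\Der_{(1,1,0)}(B_4(1))=3$ and $\dim\Der_{(1,0,1)}(B_1)=2\le 3=\dim\Der_{(1,0,1)}(B_4(1))$, all consistent with a degeneration by Lemma \ref{inv3}. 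So the exclusion $B_1\not\ra_{\rm deg}B_4(1)$, which the proposition asserts, is not proved.

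For comparison, the paper handles exactly these points with tools you set aside. Your premise that "all operators $L(x),R(x)$ are nilpotent and every such trace vanishes" is false for $B_1$ and $B_2$ (in $B_1$ one has $R(e_1)=\id$), and the paper exploits this: the operator identity $(R(x)-\tfrac13\tr(R(x))\id)^2=0$ holds in $B_1$ but in $B_4(\al)$ only for $\al=0$, which kills $B_1\ra_{\rm deg}B_4(\al)$ for all $\al\neq 0$ in one line. The exclusions $B_1\not\ra_{\rm deg}B_3$ and $B_1\not\ra_{\rm deg}B_5(\be)$ for $\be\neq 0,1$ are obtained by triangularizing $g_t^{-1}$ and showing every $B$ in the closure satisfies $\dim B\cdot B\ge 2$ or $\dim\CL(B)\ge 2$; and the $B_4(\al)\ra_{\rm deg}B_5(\be)$, $B_4(\al)\ra_{\rm deg}B_3$ and $B_5(\be)\ra_{\rm deg}B_5(\tfrac12)$ exclusions use $(\al,\be,\ga)$-derivation dimensions with parameters such as $(0,1,\tfrac{\al}{1-\al})$ --- this is the rigorous, already-available substitute for your $q$.
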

\vspace*{0.5cm}
\begin{proof}
By table $1$, $B_1$ can only degenerate to $B_3$, $B_4(\al)$ or $B_5(\be)$. 
Suppose that $B_1$ degenerates to an algebra $B_4(\al)$. Since $B_1$ satisfies the operator 
identity $T(x)=(R(x)-\frac{1}{3}\tr (R(x))\id)^2=0$, this must hold also for $B_4(\al)$.
But this is true if and only if $\al=0$. And indeed, there is a degeneration 
$B_1\ra_{\rm deg}B_4(0)$, see $6.20$. \\
The algebra $B_1$ cannot degenerate to $B_3$. To see this
we use lemma $\ref{2.7}$, and lemma $3.4$ from \cite{BU36}. We may assume that the matrix
$g_t^{-1}=(f_{ij}(t))$ has upper-triangular form. For convenience we consider a new basis
for $B_1$ by interchanging $e_1$ and $e_3$ from the old basis. Then the operators $L(x)$ for $B_1$ 
are simultaneously upper-triangular as well, and it is easy to compute the orbit of $B_1$ via
$g_t$ and $g_t^{-1}$. In fact we obtain $(e_i\cdot e_j)_t=0$ for $1\le i,j\le 2$ and
$(e_1\cdot e_3)_t=f_{33}(t)e_1$, $(e_2\cdot e_3)_t=f_{33}(t)e_2$. If not $\lim_{t\to 0}f_{33}(t)=0$
then obviously $\dim B\cdot B\ge 2$ for all algebras $B$ in the obit closure of $B_1$. In this case
$B_3$ cannot lie in the orbit closure because of $\dim (B_3\cdot B_3)=1$. However, in the other
case with $\lim_{t\to 0}f_{33}(t)=0$ all algebras $B$ in the orbit closure satisfy
$\dim \CL (B)\ge 2$, because then $e_1$ and $e_2$ are in the left annihilator. Also in this case 
$B_3$ cannot lie in the orbit closure since $\dim \CL (B_3)= 1$. \\
Suppose that $B_1$ degenerates to an algebra $B_5(\be)$. We repeat the above argument. Since
$\dim B_5(\be)\cdot B_5(\be)=1$ for all $\be$ we may assume that $\lim_{t\to 0}f_{33}(t)=0$.
We have $\dim \CL (B_5(\be))\ge 2$ if and only if $\be=0$ or $\be=1$. Hence $B_1$ can only degenerate
to $B_5(0)\simeq B_5(1)$, which is indeed the case, see $6.20$ and $6.21$. \\
The algebra $B_2$ degenerates to every other algebra of class $B$. We have the degenerations
$B_2\ra_{\rm deg}B_1\ra_{\rm deg} B_4(0)\ra_{\rm deg}B_5(0)$, see $6.18$, $6.20$, 
$B_2\ra_{\rm deg}B_4(\al)\ra_{\rm deg}B_5(\al)$ for $\al\neq 0$, see $6.19$ and 
$B_2\ra_{\rm deg}B_4(\frac{1}{2})\ra_{\rm deg}B_3$, see $6.22$. \\
By table $1$, $B_3$ can only degenerate to  $B_5(\frac{1}{2})$. This is possible, see $6.23$. \\
Suppose that $B_4(\al)$ degenerates to $B_3$. Since $\dim \Der_{(0,1,0)}(B_3)=3$, but
 $\dim \Der_{(0,1,0)}(B_4(0))=6$, $B_4(\al)$ cannot degenerate to $B_3$ for $\al=0$ by lemma
$\ref{inv3}$. It is also not possible for $\al=1$, because $\dim \Der_{(0,0,1)}(B_3)=3$ and
$\dim \Der_{(0,0,1)}(B_4(1))=6$. Assume now  $\al\neq 0,1$. Then $\dim \Der_{(0,1,\frac{\al}{1-\al})}(B_3) 
= 3$ for $\al\neq \frac{1}{2}$ and $\dim \Der_{(0,1,\frac{\al}{1-\al})}(B_4(\al)) = 4$. 
This only leaves $\al=\frac{1}{2}$, and there is a degeneration $B_4(\frac{1}{2})
\ra_{\rm deg} B_3$, see $6.22$.
The same argument shows that $B_4(\al)$ degenerates to $B_5(\be)$ if and only if $\al=\be$
or $\al=1-\be$. Note that for $\al,\be \neq 0,1$ we have $\dim \Der_{(0,1,\frac{\al}{1-\al})}(B_5(\be))=3$,
provided $\al\neq \be,1-\be,\frac{1}{2}$. There is a degeneration 
$B_4(\al)\ra_{\rm deg} B_5(\al)\simeq B_5(1-\al)$, see $6.21$. \\
Again using lemma $\ref{inv3}$ we see that $B_5(\be)$ degenerates to $B_5(\frac{1}{2})$ only for 
$\be=\frac{1}{2}$: for $\ov{\be}\neq 1$ we have $\dim \Der_{(1,\ov{\be},1)}(B_5(\frac{1}{2}))=3$, and
$\dim \Der_{(0,\frac{1-\be}{\be},1)}(B_5(\be))=4$ for $\be\neq 0,\frac{1}{2},1$. Moreover   
we have $\dim \Der_{(1,0,1)}(B_5(0))=5$. Hence $B_5(1)\simeq B_5(0)$ cannot degenerate to 
$B_5(\frac{1}{2})$. 
\end{proof}

\begin{prop}
The orbit closures for type $C\ra_{\rm deg}C$ are given as follows:
\vspace*{0.5cm}
\begin{center}
\begin{tabular}{c|c}
$A$ & $\partial (O(A))$ \\
\hline
$C_1$ & $C_2,\,C_3,\,C_5(-1),\,C_6(-1),\,C_7(-1)$    \\
$C_2$ & $C_7(-1)$   \\
$C_3$ & $C_5(-1)$    \\
$C_4$ & $C_5(0)$    \\
$C_5(\al)$ & $-$  \\
$C_6(\be)_{\be\neq 0}$ & $C_5(\be)_{\be\neq 0},\,C_7(\be)_{\be\neq 0}$  \\
$C_6(0)$ & $C_4,\,C_5(0)$    \\
$C_7(\ga)_{\ga\neq 0}$ & $-$
\end{tabular}
\end{center}
\end{prop}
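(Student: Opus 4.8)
The plan is to follow the two-step scheme (I)--(II) used for the previous types, exploiting that all algebras here share the associated Lie algebra $\Lr_2(\C)\oplus\C$, so that Lemma~\ref{inv5} gives no information \emph{within} the type and the whole burden falls on the multiplicative invariants. First I would order the eight (families of) algebras by $\dim\Der$ as in Table~1: the top row $C_1,C_6(\be)_{\be\neq-1}$ with $\dim\Der=1$; the middle row $C_2,C_3,C_4,C_5(\al)_{\al\neq0,-1},C_6(-1),C_7(\ga)_{\ga\neq-1}$ with $\dim\Der=2$; and the bottom row $C_5(0),C_5(-1),C_7(-1)$ with $\dim\Der=3$. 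By Lemma~\ref{inv3} a proper degeneration strictly raises $\dim\Der$, so every arrow points from a higher to a lower row, and the only possible targets of a middle-row algebra are the three bottom-row algebras.

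For step~(II) I would record the matrices $g_t$ realizing the essential degenerations and cite them in \S6: the rescalings $C_3\ra_{\rm deg}C_5(-1)$, $C_4\ra_{\rm deg}C_5(0)$ and $C_6(\be)\ra_{\rm deg}C_5(\be)$, which simply kill one product; the contraction $C_2\ra_{\rm deg}C_7(-1)$; the three degenerations $C_1\ra_{\rm deg}C_2$, $C_1\ra_{\rm deg}C_3$, $C_1\ra_{\rm deg}C_6(-1)$ out of the rigid-looking $C_1$; and the two genuinely non-diagonal contractions $C_6(\be)\ra_{\rm deg}C_7(\be)$ and $C_6(0)\ra_{\rm deg}C_4$, which convert the idempotent $e_3\cdot e_3=e_3$ into the $C_7$-, resp.\ $C_4$-product. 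Every remaining entry of the table then follows by transitivity, for instance $C_5(-1),C_7(-1)\in\partial O(C_1)$ via $C_1\ra_{\rm deg}C_3\ra_{\rm deg}C_5(-1)$ and $C_1\ra_{\rm deg}C_2\ra_{\rm deg}C_7(-1)$, and $C_5(0)\in\partial O(C_6(0))$ via $C_6(0)\ra_{\rm deg}C_4\ra_{\rm deg}C_5(0)$.

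The real work is step~(I). The cheapest separator is the trace invariant $c_{1,1}$. A short computation with the (lower-triangular) operators $L(x)$ gives $c_{1,1}(C_5(\al))=2-(2\al^2+2\al+1)^{-1}$ and $c_{1,1}(C_7(\ga))=3-2(3\ga^2+2\ga+1)^{-1}$, so that $c_{1,1}=1$ for $C_3,C_4,C_5(0),C_5(-1)$ while $c_{1,1}=2$ for $C_2,C_7(-1)$. By Lemma~\ref{inv1} this at once forbids every arrow mixing the two values; in particular $C_5(\al)_{\al\neq0,-1}$ (whose value is never $1$ nor $2$) cannot reach any of $C_5(0),C_5(-1),C_7(-1)$, giving $\partial O(C_5(\al))=\emptyset$, and it likewise excludes $C_2\ra_{\rm deg}C_5(0),C_5(-1)$ and $C_4\ra_{\rm deg}C_7(-1)$. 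To separate algebras sharing the same $c_{1,1}$ I would invoke the annihilator dimensions of Lemma~\ref{inv4}; for instance $\dim\CL(C_4)=2>1=\dim\CL(C_5(-1))$ rules out $C_4\ra_{\rm deg}C_5(-1)$.

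The main obstacle is the residual set of borderline cases where $c_{1,1}$ and the annihilator dimensions all agree, together with the algebras carrying the idempotent summand, $C_1$ and $C_6(\be)$, for which the cross terms in $\tr L(x)\cdot\tr L(y)$ force $c_{1,1}$ not to exist at all. Here, exactly as in the proof of Proposition~\ref{4.2}, I would fall back on the generalized derivation dimensions $\dim\Der_{(\al,\be,\ga)}$ and hunt, case by case, for a triple $(\al,\be,\ga)$ at which the source has strictly larger $\dim\Der_{(\al,\be,\ga)}$ than the candidate target, contradicting Lemma~\ref{inv3}. This is what pins the parameters in $C_6(\be)\ra_{\rm deg}C_5(\be),C_7(\be)$ to the \emph{same} $\be$, separates the look-alikes $C_3$ and $C_5(0)$ (ruling out $C_3\ra_{\rm deg}C_5(0)$), disposes of the isolated parameter values $\ga=\frac13$ and $\ga=-\frac23$ at which $c_{1,1}(C_7(\ga))$ accidentally equals $2$, resp.\ $1$, and explains why $C_6(0)$ --- lacking any target $C_7(0)$, since $C_7$ requires $\ga\neq0$ --- degenerates to $C_4$ instead. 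Assembling these computations, I expect, yields all the forbidden arrows and thereby completes the table.
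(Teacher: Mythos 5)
Your overall framework is the paper's: order by $\dim\Der$ via Table~1 and Lemma~\ref{inv3}, realize the nine essential degenerations by the explicit matrices $6.24$--$6.32$, and get the rest of the positive entries by transitivity. That part, and your $c_{1,1}$ computations where the invariant exists, check out ($c_{1,1}(C_5(\al))=(2\al+1)^2/(2\al^2+2\al+1)$, $c_{1,1}(C_7(\ga))=(3\ga+1)^2/(3\ga^2+2\ga+1)$, and the values $1$ for $C_3,C_4,C_5(0),C_5(-1)$ and $2$ for $C_2,C_7(-1)$ agree with what one gets from the paper's $d_{1,1}$ and completeness arguments). But there are two genuine problems. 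First, a local one: your claim that $c_{1,1}(C_5(\al))$ ``is never $1$ nor $2$'' for $\al\neq 0,-1$, hence $\partial O(C_5(\al))=\emptyset$ ``at once,'' breaks down at the two roots of $2\al^2+2\al+1=0$, where $\tr(L(x)L(y))$ vanishes identically and $c_{1,1}$ does not exist, so Lemma~\ref{inv1} is inapplicable. (This is repairable --- identical vanishing of $\tr(L(x)L(y))$ is itself a closed condition preserved on the orbit closure, and it fails for $C_5(0),C_5(-1),C_7(-1)$ --- but you neither notice the exception nor supply the repair.)

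Second, and more seriously: everything that actually carries the weight of step~(I) --- ruling out all targets of $C_1$ other than $C_2,C_3,C_5(-1),C_6(-1),C_7(-1)$, pinning $C_6(\be)\ra_{\rm deg}C_5(\al),C_7(\ga)$ to $\al=\ga=\be$, excluding $C_3\ra_{\rm deg}C_5(0)$, and disposing of the exceptional values $\ga=\tfrac13,-\tfrac23$ --- is delegated to an unexecuted ``hunt, case by case, for a triple $(\al,\be,\ga)$.'' You exhibit no triple and compute no dimension, and it is not self-evident that generalized derivations always produce an inequality in the required direction, especially uniformly in the parameter of a one-dimensional family. The paper settles precisely these cases with two explicit operator identities and Lemma~\ref{inv2}: $L(x)^2-L(x)R(x)=0$ holds for $C_1,C_2,C_3$, fails for $C_4$, and holds in the families $C_5(\al),C_6(\be),C_7(\ga)$ exactly at parameter $-1$; and for $\be\neq 0$ the identity $T_{\be}(x)=L(x)^2-L(x)R(x)-\tfrac{\be+1}{\be}\bigl(R(x)L(x)-R(x)^2\bigr)=0$ holds for $C_5(\be),C_6(\be),C_7(\be)$ and forces the parameter of any target in these families to equal $\be$ (with $T_0(x)=R(x)L(x)-R(x)^2$ handling $C_6(0)$, whose only type-$C$ solutions are $C_4$, $C_5(0)$ and the excluded $C_7(0)$). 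Until you either verify your derivation-dimension inequalities or import identities of this kind, the majority of the forbidden arrows in the table remain unproved.
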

\vspace*{0.5cm}
\begin{proof}
The algebra $C_1$ satisfies the operator identity $T(x)=L(x)^2-L(x)R(x)=0$.
This identity holds in $C_5(\al)$, $C_6(\be)$ or $C_7(\ga)$ exactly for
$\al=-1$, $\be=-1$ or $\ga=-1$. It does not hold for $C_4$. Hence $C_1$ can only
degenerate to $C_2$,$C_3$,$C_5(-1)$,$C_6(-1)$ and $C_7(-1)$. Indeed, all of these
degenerations exist, see $6.24-6.32$ and the Hasse diagram in section $5$.\\
The algebras $C_2$ and $C_3$ also satisfy the above operator identity, so that they
only could degenerate to $C_5(-1)$ or $C_7(-1)$, see table $1$. Since $d_{1,1}(C_2)=3$, but
$d_{1,1}(C_5(-1))=2$, $C_2$ cannot degenerate to $C_5(-1)$ by lemma $\ref{inv1}$. 
On the other hand, the degeneration $C_2\ra_{\rm deg}C_7(-1)$ is possible, see $6.30$.
Since $d_{1,1}(C_3)=2$, but $d_{1,1}(C_7(-1))=3$, only $C_3\ra_{\rm deg}C_5(-1)$ remains.
This is possible, see $6.31$. \\
The algebra $C_4$ can only degenerate to $C_7(-1)$, $C_5(-1)$ or $C_5(0)$, see table $1$.
Since $C_4$ is complete, only the complete algebra $C_5(0)$ remains. Indeed, there is
a degeneration $C_4\ra_{\rm deg}C_5(0)$, see $6.32$. For $\be\neq 0$ consider the operator polynomial
\[
T_{\be}(x)=L(x)^2-L(x)R(x)-\frac{\be+1}{\be}R(x)L(x)+\frac{\be+1}{\be}R(x)^2.
\]
We have $T_{\al}(x)=0$ for $C_5(\al)$ with $\al\neq 0$. This holds for $C_7(\ga)$ if and only
if $\ga=\al$. The algebra $C_5(\al)$ with $\al\neq 0,-1$ can only degenerate to 
$C_7(-1)$, $C_5(-1)$ or $C_5(0)$, see table $1$. None of these is possible because of the above
operator identity. By table $1$, there are no proper degenerations for $C_5(0)$ and $C_5(-1)$
within type $C$. \\
The algebra $C_6(\be)$ degenerates to $C_5(\al)$ and to $C_7(\ga)$ for $\al=\be$ and $\ga=\be$, 
see $6.28$, $6.29$. We want to show that there are no degenerations possible if not $\al=\be=\ga$.
For $\be \neq 0$ this follows from the fact, that $C_6(\be)$ satisfies $T_{\be}(x)=0$,
$C_5(\al)$ satisfies this identity if and only if $\al=\be$, and $C_7(\ga)$ satisfies  it if and only if 
$\ga=\be$. For $\be=0$ we consider the operator polynomial $T_0(x)=R(x)L(x)-R(x)^2$. In $C_6(0)$
we have $T_0(x)=0$, and the only other algebras of type $C$ satisfying this identity are
$C_4$, $C_5(0)$ and $C_7(0)$. However, for the family $C_7(\ga)$ we have required $\ga\neq 0$.
The degenerations $C_6(0)\ra_{\rm deg}C_4\ra_{\rm deg}C_5(0)$ are possible, see $6.27$ and $6.32$.
\end{proof}

\begin{prop}\label{4.4}
The orbit closures for type $D\ra_{\rm deg}D$ are given as follows:
\vspace*{0.5cm}
\begin{center}
\begin{tabular}{c|c}
$A$ & $\partial (O(A))$ \\
\hline
$D_1$ & $D_2(-1)$    \\
$D_2(\al)$ & $-$   
\end{tabular}
\end{center}
\end{prop}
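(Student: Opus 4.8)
\emph{The plan is to} exploit that type $D$ contains very few algebras. The Novikov algebras with associated Lie algebra $\Lr_3(\C)$ are exactly $D_1$ and the one-parameter family $D_2(\al)$, and by Table $1$ all of them satisfy $\dim \Der = 2$ except $D_2(-1)$, for which $\dim \Der = 3$. Since a proper degeneration strictly increases the derivation dimension (Lemma \ref{inv3} with $(\al,\be,\ga)=(1,1,1)$), the only possible proper degeneration inside type $D$ must have $D_2(-1)$ as target; any $D_1 \ra_{\rm deg} D_2(\al)$ or $D_2(\al_0)\ra_{\rm deg}D_2(\al_1)$ with $\al,\al_1 \neq -1$ would keep $\dim \Der$ fixed at $2$ between non-isomorphic algebras, hence is excluded. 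The proposition therefore reduces to (i) constructing $D_1 \ra_{\rm deg} D_2(-1)$ and (ii) excluding $D_2(\al_0)\ra_{\rm deg}D_2(-1)$ for every $\al_0\neq -1$.

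For (i) I would take the diagonal contraction $g_t=\diag(1,t,t)$, so that $g_t^{-1}=\diag(1,t^{-1},t^{-1})$, and compute $(x\cdot y)_t=g_t(g_t^{-1}x\cdot g_t^{-1}y)$ on $D_1$. One finds $(e_1\cdot e_1)_t=-e_1+t\,e_3$, while $(e_1\cdot e_3)_t=e_2$, $(e_2\cdot e_1)_t=-e_2$ and $(e_3\cdot e_1)_t=-e_3$ remain unchanged and all remaining products stay zero. Letting $t\to0$ annihilates the $e_3$-term of $e_1\cdot e_1$ and yields precisely the structure constants of $D_2(-1)$, giving the desired essential degeneration.

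For (ii) the obstruction comes from the generalized left-trace invariants of Lemma \ref{inv1}. A direct computation of the left multiplication operators of $D_2(\al)$ gives
\[
c_{1,1}(D_2(\al))=\frac{(3\al+2)^2}{3\al^2+4\al+2},\qquad
c_{2,1}(D_2(\al))=\frac{(3\al^2+4\al+2)(3\al+2)}{\al^3+2(\al+1)^3},
\]
both equal to $1$ at $\al=-1$. Solving $c_{1,1}(D_2(\al))=1$ yields $\al\in\{-1,-\frac{1}{3}\}$, whereas $c_{2,1}(D_2(\al))=1$ rewrites as $(\al+1)(3\al^2+3\al+1)=0$, i.e.\ $\al\in\{-1,\frac{-3\pm i\sqrt{3}}{6}\}$. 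These two solution sets intersect only at $\al=-1$, so for any $\al_0\neq-1$ at least one of the two invariants differs from its value at $-1$, which by Lemma \ref{inv1} rules out the degeneration. At the finitely many $\al_0$ where a denominator vanishes identically in the structure constants I would fall back on the other invariant: at the two roots of $3\al^2+4\al+2$ one has $c_{2,1}(D_2(\al_0))=0\neq1$, and at the roots of $\al^3+2(\al+1)^3$ -- which share no zero with $3\al^2+4\al+2$ and are different from $-1$ and $-\frac{1}{3}$ -- the invariant $c_{1,1}$ is defined and differs from $1$.

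\emph{The hard part} is (ii): no single trace invariant separates the whole family, because $c_{1,1}$ already coincides with its value at $\al=-1$ at the spurious parameter $\al=-\frac{1}{3}$. The crux is to introduce the second invariant $c_{2,1}$ and to verify that the two coincidences meet only at $\al=-1$, together with the bookkeeping of which invariant remains defined at the exceptional (complex) parameters.
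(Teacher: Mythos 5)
Your proof is correct and follows essentially the same route as the paper: Table 1 (equivalently, the strict increase of $\dim\Der$ under proper degeneration) reduces everything to $D_1\ra_{\rm deg}D_2(-1)$ and the exclusion of $D_2(\al)\ra_{\rm deg}D_2(-1)$ for $\al\neq -1$; your contraction $g_t=\diag(1,t,t)$ is exactly the matrix in $6.33$, and your pair of invariants $c_{1,1}$, $c_{2,1}$ coincides with the paper's $c_{1,1}$, $c_{1,2}$. Your treatment of the exceptional parameters where a denominator vanishes is in fact spelled out more carefully than in the paper.
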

\vspace*{0.5cm}
\begin{proof}
By table $1$, the algebra $D_1$ can only degenerate to $D_2(-1)$. This is possible, see $6.33$. 
It remains to determine the $\al\in \C$ for which there is a degeneration
$D_2(\al)\ra_{\rm deg}D_2(-1)$. We have $c_{i,j}(D_2(-1))=1$ for all $i,j\in \N$ and

\[
c_{i,j}(D_2(\al))=\frac{(\al^i+2(\al+1)^i)(\al^j+2(\al+1)^j)}{\al^{i+j}+2(\al+1)^{i+j}}
\]
in general. Comparing the invariants $c_{1,1}$ we obtain 
\[
\frac{(3\al+2)^2}{\al^2+2(\al+1)^2}=1,
\] 
provided the denominator is nonzero. In this case we obtain $(\al+1)(3\al+1)=0$. Using $c_{1,2}$ 
we see that $\al^2+2(\al+1)^2=0$ and $\al=-\frac{1}{3}$ are both impossible. Hence we obtain $\al=-1$.
\end{proof}

\begin{prop}\label{4.5}
The orbit closures for type $E_{\la}\ra_{\rm deg}E_{\la}$, $\la\neq 0$ are given as follows:
\vspace*{0.5cm}
\begin{center}
\begin{tabular}{c|c}
$A$ & $\partial (O(A))$ \\
\hline
$E_{1,\la}(\al)$ & $-$    \\
$E_{2,\la}$ & $E_{1,\la}(-1)$   \\
$E_3$ & $E_{1,2}(-1),\,E_{2,2},\,E_6$    \\
$E_4$ & $E_{1,\frac{1}{2}}(-1),\,E_{2,\frac{1}{2}},\,E_5(-1)$    \\
$E_5(\be)_{\be\neq -\frac{1}{2},-1}$ & $E_{1,\frac{1}{2}}(\be)_{\be\neq -\frac{1}{2},-1}$  \\
$E_5( -\frac{1}{2})$ & $E_{1,2}(-1),\,E_6$   \\
$E_5(-1)$ & $E_{1,\frac{1}{2}}(-1)$   \\
$E_6$ & $E_{1,2}(-1)$  
\end{tabular}
\end{center}
\end{prop}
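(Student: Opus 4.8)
The plan is to follow the two-step scheme of Propositions \ref{4.1}, \ref{4.2} and \ref{4.4}: first cut down the a priori possible arrows using Table $1$, Lemma \ref{inv3} and Lemma \ref{inv5}, then eliminate the impossible ones by invariants and exhibit the survivors by explicit contractions from section $6$. Since the type is $E_\la\ra_{\rm deg}E_\la$, every arrow refines the improper degeneration $\Lr_{3,\la}(\C)\ra_{\rm deg}\Lr_{3,\la}(\C)$, so $\la$ stays fixed, and by Lemma \ref{inv3} an arrow can only increase $\dim\Der$. For $\la\neq\frac12$ the only algebras of type $E_\la$ are the family $E_{1,\la}(\al)$ and the single algebra $E_{2,\la}$, so I dispose of this generic situation first; the substantial case is $\la=\frac12$, where the isomorphism $\Lr_{3,\frac12}(\C)\cong\Lr_{3,2}(\C)$ additionally brings in $E_3,E_4,E_5(\be),E_6$ together with the second branch $E_{1,2}(\cdot)$ of the first family.

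The decisive computation is the spectrum of the left multiplication. For $E_{1,\la}(\al)$ the matrix $L(x)$ is block triangular with eigenvalues $\al x_1,(\al+1)x_1,(\al+\la)x_1$, so, exactly as for $D_2(\al)$ in Proposition \ref{4.4},
\[
c_{i,j}(E_{1,\la}(\al))=\frac{(\al^i+(\al+1)^i+(\al+\la)^i)(\al^j+(\al+1)^j+(\al+\la)^j)}{\al^{i+j}+(\al+1)^{i+j}+(\al+\la)^{i+j}}.
\]
Within a fixed $\la$ the pair $(c_{1,1},c_{1,2})$ separates the parameter $\al$, so by Lemma \ref{inv1} no member of the family degenerates properly onto another member; as the only members of strictly larger derivation dimension are $E_{1,\la}(-1)$ and $E_{1,\la}(-\la)\cong E_{1,1/\la}(-1)$, this already yields the line $E_{1,\la}(\al):-$. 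For $E_{2,\la}$ the operator $L(x)$ has eigenvalues $-x_1,0,(\la-1)x_1$, whence $c_{i,j}(E_{2,\la})=c_{i,j}(E_{1,\la}(-1))$; by Table $1$ the only arrow left for $E_{2,\la}$ is $E_{2,\la}\ra_{\rm deg}E_{1,\la}(-1)$, which I construct by an explicit $g_t$ from section $6$.

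For $\la=\frac12$ I record the $L$-spectra of all algebras of the type. Each of $E_3,E_4,E_5(\be),E_6$ has a triangularizable $L(x)$ with eigenvalues proportional to $x_1$, and for generic $\be$ the algebra $E_5(\be)$ shares the spectrum $\be x_1,(\be+1)x_1,(\be+\frac12)x_1$ of $E_{1,\frac12}(\be)$; the special values $\be=-\frac12$ and $\be=-1$ produce the symmetric spectrum $\{-1,0,1\}$ of $E_{1,2}(-1)$ and the spectrum $\{-1,0,-\frac12\}$ of $E_{1,\frac12}(-1)$ respectively, while $E_3,E_6$ are symmetric and $E_4$ is of the latter type. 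Because $c_{i,j}$ is scale invariant, it matches exactly inside each spectral cluster and separates different clusters, which disposes of most candidate arrows at once: it forces any arrow $E_5(\be)\ra_{\rm deg}E_{1,\frac12}(\al)$ to have $\al=\be$, and it rules out all arrows between different clusters. The surviving positive arrows---$E_5(\be)\ra_{\rm deg}E_{1,\frac12}(\be)$ by killing $e_3\cdot e_3=e_2$, together with $E_3\ra_{\rm deg}E_{2,2},E_6$, $E_4\ra_{\rm deg}E_{2,\frac12},E_5(-1)$, $E_5(-\frac12)\ra_{\rm deg}E_6$, $E_5(-1)\ra_{\rm deg}E_{1,\frac12}(-1)$ and $E_6\ra_{\rm deg}E_{1,2}(-1)$, the remaining boundary members following by transitivity---are all produced by explicit matrices from section $6$.

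The main obstacle is that $c_{i,j}$ (and likewise the right-hand invariant $d_{i,j}$, which equals $3$ on several of these algebras) cannot separate two distinct algebras lying in the same spectral cluster, so the few arrows that must be excluded \emph{within} a cluster need a finer tool. The representative difficulty is $E_5(-\frac12)\not\ra_{\rm deg}E_{2,2}$: both carry the symmetric $L$-spectrum and the same $d_{i,j}$, yet $E_3$ \emph{does} degenerate to $E_{2,2}$, so the discriminating invariant must block $E_5(-\frac12)$ while admitting $E_3$. I expect to settle this, as in Proposition \ref{4.2}, by exhibiting a triple $(\al,\be,\ga)$ with $\dim\Der_{(\al,\be,\ga)}(E_5(-\frac12))>\dim\Der_{(\al,\be,\ga)}(E_{2,2})$ and invoking Lemma \ref{inv3}; searching for the correct triples, together with the bookkeeping of the normal forms (replacing $E_{1,\la}(-\la)$ by $E_{1,1/\la}(-1)$ as fixed after the algebra list), is the genuinely laborious part, whereas the contractions realizing the positive arrows are routine once $L(x)$ and $R(x)$ are in triangular shape.
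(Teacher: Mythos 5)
Your overall architecture (Table $1$ plus invariants plus explicit matrices) matches the paper's, but the invariant you build the exclusions on is too weak, and the failure is not confined to the one case you flag. The quantity $c_{i,j}$ depends only on the eigenvalues of $L(x)$ up to a common scalar, so it is unchanged when the whole $L$-spectrum is multiplied by $-1$. For $\la=\tfrac12$ the negation of $\{\al,\al+1,\al+\tfrac12\}$ is again of the same shape, namely $\{\ov{\al},\ov{\al}+1,\ov{\al}+\tfrac12\}$ with $\ov{\al}=-1-\al$; hence $c_{i,j}(E_{1,\frac12}(\al))=c_{i,j}(E_{1,\frac12}(-1-\al))$ for all $i,j$, and likewise $c_{i,j}(E_5(\be))=c_{i,j}(E_{1,\frac12}(-1-\be))$. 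So your claim that $(c_{1,1},c_{1,2})$ separates $\al$ within the family, and your claim that $c_{i,j}$ forces $\al=\be$ in any arrow $E_5(\be)\ra_{\rm deg}E_{1,\frac12}(\al)$, are both false: the one-parameter family of candidate arrows $E_5(\be)\ra_{\rm deg}E_{1,\frac12}(-1-\be)$ is invisible to your invariant, as are, for instance, $E_4\ra_{\rm deg}E_{1,\frac12}(0)$, $E_{1,2}(0)\ra_{\rm deg}E_{1,2}(-2)\simeq E_{1,\frac12}(-1)$, and $E_{2,\la}\ra_{\rm deg}E_{1,\la}(-\la)$ for $\la^2-\la+1=0$. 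A few of these can be rescued by completeness, but most cannot, and you do not address them; you identify only $E_5(-\frac12)\not\ra_{\rm deg}E_{2,2}$ as problematic and propose a speculative fix via $(\al,\be,\ga)$-derivations.

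The paper's proof does not rest on $c_{i,j}$ at all for these cases (it uses $c_{i,j}$ only to handle $\la=1$). Its workhorse is Lemma \ref{inv2} applied to cubic operator polynomials in \emph{both} $L(x)$ and $R(x)$: the identity $T_{\al,\la}(x)=0$ satisfied by $E_{1,\la}(\al)$, whose validity on $E_{1,\ov{\la}}(\ov{\al})$ is equivalent to $(\al-\ov{\al}\la)(\al-\ov{\al})=0$ and $(\al\ov{\la}-\ov{\al}\la)(\al\ov{\la}-\ov{\al})=0$, the analogous $S_\la(x)$ for $E_{2,\la}$, and one further identity $L(x)^3-L(x)R(x)L(x)+R(x)L(x)^2-R(x)^2L(x)=0$ to separate $E_5(-\frac12)$ from $E_{2,\la}$. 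Because these identities involve $R(x)$, they see strictly more than the $L$-spectrum and resolve exactly the cases your cluster argument leaves open. To repair your proof you would need either to import these operator identities or to actually produce the $(\al,\be,\ga)$-derivation triples for every residual case listed above, not just the single one you mention.
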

\vspace*{0.5cm}
\begin{proof}
For $\la=1$ the algebra $E_{1,\la}(\al)$ can only degenerate to $E_{1,1}(-1)$ by table $1$.
Since $c_{i,j}(E_{1,1}(\al))=c_{i,j}(D_2(\al))$ it follows $\al=-1$ as above.
The case $\la=-1$ can be reduced to the case $\la=1$ because of $E_{1,-1}(\al)\simeq E_{1,1}(-\al)$.
If $E_{1,\la}(\al)$ degenerates to $E_{1,\ov{\la}}(\ov{\al})$, then we obtain an
associated Lie algebra degeneration $\Lr_{3,\la}(\C)\ra \Lr_{3,\ov{\la}}(\C)$, see lemma 
$\ref{inv4}$, so that $\ov{\la}=\la$ or $\ov{\la}=\frac{1}{\la}$.
For $\al\neq 0$ consider the operator polynomial
\begin{align*}
T_{\al,\la}(x) & = L(x)^3-L(x)^2R(x)-\frac{\al+\la}{\al}L(x)R(x)L(x)
-\frac{\al+1}{\al}R(x)L(x)^2 \\[0.2cm]
 & + \frac{3\al+2\la+1}{\al}R(x)L(x)R(x)+\frac{\al+\la}{\al^2}R(x)^2L(x)
-\frac{(\al+\la)(\al+1)}{\al^2}R(x)^3.
\end{align*}
We have $T_{\al,\la}(x)=0$ for $E_{1,\la}(\al)$. The identity holds for $E_{1,\ov{\la}}(\ov{\al})$ 
with $\ov{\al}\neq 0$ if and only if
\begin{align*}
0 & = (\al-\ov{\al}\la)(\al-\ov{\al}), \\
0 & = (\al \ov{\la}-\ov{\al}\la)(\al \ov{\la}-\ov{\al}).
\end{align*}
As we have seen, either $\ov{\la}=\la$ or $\ov{\la}=\frac{1}{\la}$. Assume first
$\ov{\la}=\la$. Then either $\ov{\al}=\al$ or $\la^2=1$. The cases $\la=\pm 1$ have been
treated above. For $\ov{\la}=\frac{1}{\la}$ we obtain either $\ov{\al}=\al$ or 
$\ov{\al}=\frac{\al}{\la}$. The second case yields an isomorphic algebra since
$E_{1/\la}(\al/\la)\simeq E_{1,\la}(\al)$. Together this shows that there is no proper degeneration 
$E_{1,\la}(\al)\ra_{\rm deg}E_{1,\ov{\la}}(\ov{\al})$ for $\al,\ov{\al}\neq 0$. For $\al=0$
the same argument can be applied to the operator $T_{0,\la}(x)=R(x)L(x)-R(x)^2$, which is always 
zero for $E_{1,\la}(0)$, and zero for $E_{1,\ov{\la}}(\ov{\al})$ if and only if $\ov{\al}=0$. \\
The algebra $E_{2,\la}$ may degenerate to $E_{2,1}$ or $E_{1,\ov{\la}}(\al)$ by table $1$.
In the first case we obtain $\la=1$ by lemma $\ref{inv4}$. For the second case consider the
operator polynomial 
\[
S_{\la}(x)=L(x)^3-L(x)^2R(x)+(\la-1)R(x)L(x)^2-(\la-1)R(x)L(x)R(x).
\]
We have $S_{\la}(x)=0$ for $E_{2,\la}$. This identity holds for $E_{1,\ov{\la}}(\al)$ if and only if
\begin{align*}
0 & = (\al\la+1)(\al+1), \\
0 & = (\al\la+ \ov{\la})(\al+\ov{\la}).
\end{align*}
Again we have two cases, either $\ov{\la}=\la$ or $\ov{\la}=\frac{1}{\la}$.
In the first case it follows $\al=-1$, and there is indeed a degeneration
$E_{2,\la}\ra_{\rm deg}E_{1,\la}(-1)$, see $6.41$. In the second case we obtain 
either $\al=-1$, $\la^2=1$, or $\al=-\frac{1}{\la}$. Since $E_{1,\frac{1}{\la}}(-\frac{1}{\la})\simeq
E_{1,\la}(-1)$ we are again back to the case $\al=-1$. \\
The algebra $E_3$ may degenerate to $E_6$, $E_{2,\la}$, $E_5(-1)$ or $E_{1,\la}(\al)$ by table $1$.
Moreover, $\la=2$ or $\la=\frac{1}{2}$ by lemma $\ref{inv4}$. For $E_3$ we have 
\begin{align*}
S_2(x) & = T_{-\frac{1}{2},\frac{1}{2}}(x)= L(x)^3-L(x)^2R(x)+R(x)L(x)^2-R(x)L(x)R(x)=0.  
\end{align*}
This identity holds for $E_{2,\la}$ if and only if $(\la-1)(\la-2)=0$. It follows $\la=2$,
and indeed there is a degeneration $E_3\ra_{\rm deg} E_{2,2}$, see $6.36$. 
The identity $S_2(x)=0$ holds for $E_{1,\la}(\al)$ if and only if
\begin{align*}
0 & = (2\al+1)(\al+1), \\
0 & = (2\al+\la)(\al+\la).
\end{align*}
For $\la=2$ we obtain $\al=-1$, and indeed there is a degeneration
$E_3\ra_{\rm deg} E_{2,2}\ra_{\rm deg} E_{1,2}(-1)$. For $\la=\frac{1}{2}$ we obtain
$\al=-\frac{1}{2}$, but as before we have $E_{1,\frac{1}{2}}(-\frac{1}{2})\simeq E_{1,2}(-1)$.
Finally, the identity $S_2(x)=0$ holds for $E_6$, but not for $E_5(-1)$. There is a degeneration
$E_3 \ra_{\rm deg}E_6$, see $6.35$. \\
The algebra $E_4$ may degenerate to $E_6$, $E_{2,\la}$, $E_5(-1)$ or $E_{1,\la}(\al)$ by table $1$.
It satisfies the identity $T_{\al,\la}(x)=0$ with $(\al,\la)=(-1,\frac{1}{2})$. This identity holds 
for $E_5(-1)$ but not for $E_6$. It holds for $E_{2,\la}$ if and only if $(2\la-1)(\la-1)=0$.
Again $\la=1$ is impossible, but the degenerations $E_4 \ra_{\rm deg} E_{2,\frac{1}{2}}$ and
$E_4 \ra_{\rm deg} E_5(-1)$ exist, see $6.37$, $6.38$. The identity holds for $E_{1,\la}(\al)$
if and only if 
\begin{align*}
0 & = (\al+2)(\al+1), \\
0 & = (\al+2\la)(\al+\la).
\end{align*}
A discussion as above shows that the only possibility is a degeneration
$E_4 \ra_{\rm deg}E_{1,\frac{1}{2}}(-1)$, which exists, see $6.38$ and $6.39$.\\
An algebra $E_5(\be)$ may degenerate to $E_6$, $E_{2,\la}$, $E_5(-1)$ or $E_{1,\la}(\al)$ 
by table $1$, with $\la=2$ or $\la=\frac{1}{2}$.
For $\be=0$ the algebra $E_5(0)$ is complete and therefore can only degenerate to 
$E_{1,\frac{1}{2}}(0)$, which is indeed the case, see $6.39$. For $\be\neq 0$ we have
$T_{\al,\la}(x)=0$ with $(\al,\la)=(\be,\frac{1}{2})$. This identity holds for 
$E_6$ if and only if $\be=-\frac{1}{2}$, and there exists a degeneration 
$E_5(-\frac{1}{2})\ra_{\rm deg} E_6$, see $6.34$. The identity is satisfied
for $E_{2,\la}$ if and only if 
\begin{align*}
0 & = (2\be+1)(\be+1), \\
0 & = (2\la\be+1)(\be\la+1).
\end{align*}
For $\la=\frac{1}{2}$ we obtain $\be=-1$ or $\be=-\frac{1}{2}$. However $\be=-1$ is not possible 
because of table $1$, and $\be=-\frac{1}{2}$ is also not possible since 
$E_5(-\frac{1}{2})$ satisfies the identity 
\[
T(x)=L(x)^3-L(x)R(x)L(x)+R(x)L(x)^2-R(x)^2L(x)=0,
\]
whereas it does not hold for any algebra $E_{2,\la}$. 
For $\la=2$ we obtain $\be=-\frac{1}{2}$. Again $E_5(-\frac{1}{2})$ cannot degenerate to
$E_{2,2}$, since $E_{2,2}$ does not satisfy the identity $T(x)=0$. The identity $T_{\be,\frac{1}{2}}(x)=0$
holds for $E_{1,\la}(\al)$ if and only if
\begin{align*}
0 & = (\al-\be)(\al-2\be), \\
0 & = (\al-\la\be)(\al-2\la\be).
\end{align*}
If $\la=\frac{1}{2}$ then $\al=\be$. If $\la=2$ then $\al=\be$ or $\al=2\be$. Therefore
the only possible degeneration of $E_5(\be)$ to $E_{1,\la}(\al)$
is $E_5(\be)\ra_{\rm deg}E_{1,\frac{1}{2}}(\be)\simeq E_{1,2}(2\be)$, which 
indeed exists, see $6.39$. Using again $T_{\be,\frac{1}{2}}(x)=0$ we see that
$E_5(\be)$ cannot degenerate to $E_5(-1)$ except for $\be=-1$. \\
Finally $E_6$ may degenerate to an algebra $E_{1,\la}(-1)$ with $\la=2$ or $\la=\frac{1}{2}$.
As we have seen above, $E_6$ satifies the operator identity $S_2(x)=0$. This holds
for $E_{1,\la}(-1)$ if and only if $(\la-2)(\la-1)=0$. Hence we only can have a degeneration
$E_6\ra_{\rm deg} E_{1,2}(-1)$, which indeed exists, see $6.40$.
\end{proof}

\begin{prop}\label{4.6}
The orbit closures for type $C\ra_{\rm deg}B$ are given as follows:
\vspace*{0.5cm}
\begin{center}
\begin{tabular}{c|c}
$A$ & $\partial (O(A))$ \\
\hline
$C_1$ & $B_3,\,B_4(\al),\,B_5(\al)$    \\
$C_2$ & $B_4(0),\,B_5(0)$   \\
$C_3$ & $B_4(1),\,B_5(0)$    \\
$C_4$ & $B_4(0),\,B_5(0)$    \\
$C_5(\al)_{\al\neq 0,-\frac{1}{2},-1}$ & $B_4(-\al),\,B_5(-\al),\; \al\neq 0,-\frac{1}{2},-1$.\\
$C_5(0)$ & $B_5(0)$  \\
$C_5(-\frac{1}{2})$ & $B_3,\,B_4(\frac{1}{2}),\,B_5(\frac{1}{2})$  \\
$C_5(-1)$ & $B_5(0)$  \\
$C_6(\be)_{\be\neq -1}$ & $B_3,\,B_4(\be),\,B_5(\be)$  \\
$C_6(-1)$ & $B_5(0)$    \\
$C_7(\ga)_{\ga\neq 0,-1}$ & $B_4(0),\,B_5(0)$ \\
$C_7(-1)$ & $B_5(0)$ 
\end{tabular}
\end{center}
\end{prop}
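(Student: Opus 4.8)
The plan is to run the paper's standard two-step scheme: (I) exclude the impossible degenerations by invariants, then (II) realize the survivors by the explicit contractions of Section $6$. Every degeneration $C\ra_{\rm deg}B$ refines the Lie algebra degeneration $\Lr_2(\C)\oplus\C\ra_{\rm deg}\Ln_3(\C)$ from the Lie algebra Hasse diagram above, so nothing is obstructed on the level of associated Lie algebras and the entire content lies in the Novikov refinement. I would organize the argument by the source algebra. For each source, Table $1$ (the derivation dimension must strictly increase, Lemma \ref{inv3}) together with $\dim A\cdot A\ge\dim B\cdot B$ (Lemma \ref{inv4}) cuts the candidate targets down to a few of $B_3$, $B_4(p)$, $B_5(q)$.

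A preliminary observation fixes the toolbox, and it is the reason the proof is delicate. Since every type-$B$ algebra has nilpotent left and right multiplications, the generalized trace invariants $c_{i,j}$, $d_{i,j}$ do not exist on the target side, so Lemma \ref{inv1} is mute; moreover the operator identities inherited by the sources (for instance $L(x)^2-L(x)R(x)=0$ for $C_1,C_2,C_3$, or $T_{\al}(x)=0$ for $C_5(\al)$, $C_6(\be)$, $C_7(\ga)$) all collapse: on $B_4(p)$ one has $L(x)^2=L(x)R(x)$ and $R(x)L(x)=R(x)^2$, while on $B_3$ and $B_5(q)$ every product of two of $L(x),R(x)$ already vanishes, and all triple products vanish throughout. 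Hence no operator identity in $L,R$ can obstruct a $C\ra_{\rm deg}B$ step, and Lemma \ref{inv2} gives nothing. The effective invariants are therefore only three: $\dim A\cdot A$, which equals $2$ for $B_4(p)$ but $1$ for $B_3$ and $B_5(q)$ and thereby isolates the $B_4$-targets; the annihilator dimensions $\dim\CL$, $\dim\CR$ (Lemma \ref{inv4}), which pick out the special members $B_4(0)$ and $B_4(1)$; and, decisively, the $(\al,\be,\ga)$-derivation dimensions (Lemma \ref{inv3}), used exactly as in the proof of Proposition \ref{4.2}.

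For the one-parameter sources I would first exhibit the contractions of Section $6$ giving $C_5(\al)\ra_{\rm deg}B_4(-\al),B_5(-\al)$ and $C_6(\be)\ra_{\rm deg}B_4(\be),B_5(\be)$, and for the generic source $C_1$ the whole families $B_4(\al),B_5(\al)$ together with $B_3$. I would then exclude every other target parameter by selecting triples $(\al,\be,\ga)$ for which $\dim\Der_{(\al,\be,\ga)}(B_4(p))$ and $\dim\Der_{(\al,\be,\ga)}(B_5(q))$ jump precisely at the claimed value, so that the inequality of Lemma \ref{inv3} forces the correspondence ($p=-\al$ for $C_5$, $p=\be$ for $C_6$) and kills the rest. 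That $C_6(\be)$ reaches $B_3$ directly for all $\be\neq-1$, whereas $C_5(-\frac{1}{2})$ reaches $B_3$ only by transitivity through $B_4(\frac{1}{2})\ra_{\rm deg}B_3$ of Proposition \ref{4.2}, reflects the smaller derivation algebra of $C_6(\be)$ and should likewise be read off from these dimensions.

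The hard part will be the collapsing sources $C_5(0)$, $C_5(-1)$, $C_6(-1)$, $C_7(-1)$ (and $C_2,C_4,C_7(\ga)$ for the $B_4(0)$ entry), whose closures contain only $B_5(0)$, respectively $B_4(0),B_5(0)$. The worst case is $C_6(-1)$: it has $\dim A\cdot A=3$ and $\dim\Der=2$, so neither the product-space dimension nor the derivation dimension excludes any of $B_3,B_4(p),B_5(q\neq 0)$, and, as explained, the trace invariants and operator identities are powerless. Thus the impossibility of reaching anything beyond $B_5(0)$ must be certified \emph{entirely} by carefully chosen $(\al,\be,\ga)$-derivation dimensions (for $C_5(-1)$, $C_5(0)$ and $C_7(-1)$ the equality $\dim\Der=3=\dim\Der(B_4(p))$ at least removes the $B_4$-targets, but $B_3$ and $B_5(q\neq0)$ still require the same treatment). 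Producing triples sharp enough to exclude every spurious target in these degenerate cases, while simultaneously certifying the exact parameter matching in the generic families, is the step I expect to demand the most care.
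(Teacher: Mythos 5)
Your overall architecture (Table~1 plus invariants for step I, the Section~6 matrices for step II, $(\al,\be,\ga)$-derivation spaces as the workhorse for the parameter matching in the $C_5$ and $C_6$ families, and a single well-chosen triple such as $(1,1,0)$ for the collapsing sources $C_6(-1)$ and $C_7(-1)$) does match the paper. But the proposal has three concrete gaps. First, your preliminary ``toolbox'' observation rests on a false premise and leads you to discard a tool the paper needs: not every type-$B$ algebra has nilpotent multiplications ($B_1$ and $B_2$ do not), and Table~1 does not exclude $B_1$ (with $\dim \Der(B_1)=2$) as a target for $C_1$ or for $C_6(\be)_{\be\neq -1}$ (both with $\dim\Der=1$). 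The paper rules out $C_1\ra_{\rm deg}B_1$ precisely with the operator identity $L(x)^2-L(x)R(x)=0$ and $C_6(\be)\ra_{\rm deg}B_1$ with $T_{\be}(x)=0$ (resp.\ $R(x)L(x)-R(x)^2=0$ for $\be=0$), so Lemma \ref{inv2} is not ``mute'' here. Moreover, even among the nilpotent targets, the identity $(R(x)-\frac{1}{3}\tr(R(x))\id)^2=0$, satisfied by $C_2$, holds in $B_4(\al)$ if and only if $\al=0$ (on $B_4(\al)$ one has $\tr R(x)=0$ but $R(x)^2e_2=\al x_2^2e_3$), and this is exactly how the paper pins down $B_4(0)$ inside the $B_4$-family for $C_2$ and $C_7(\ga)$; your blanket claim that no operator identity can obstruct a $C\ra_{\rm deg}B$ step is therefore wrong.

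Second, your three listed invariants do not suffice for $C_2$ and $C_7(\ga)_{\ga\neq 0,-1}$: these sources have $\dim\CL=0$ and $\dim\CR\le 1$, so Lemma \ref{inv4} excludes nothing against $B_3$ or $B_5(\be)_{\be\neq 0,1}$, and the trace invariants and triple-product identities are indeed useless there. The paper closes this hole with a technique you never mention: a direct computation of $\ov{O(C_2)}$ via a triangular matrix $g_t^{-1}=(f_{ij}(t))$ (as in Proposition \ref{4.2} for $B_1\not\ra_{\rm deg}B_3$), yielding the dichotomy that every $B$ in the closure satisfies $\dim(B\cdot B)\ge 2$ or $\dim\CL(B)\ge 2$. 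Third, for $C_5(-\frac{1}{2})\ra_{\rm deg}B_5(\be)$ the derivation argument degenerates exactly at $\al=-\frac{1}{2}$, and the paper has to invoke Lemma \ref{inv5}: $J_{C_5(-\frac{1}{2})}\simeq A_2$ while $J_{B_5(\be)}\simeq A_9$ for $\be\neq\frac{1}{2}$, and $A_2$ does not degenerate to $A_9$. You correctly sense that $\al=-\frac{1}{2}$ is special, but your plan provides no instrument for this case. Until these three points are repaired the proof is incomplete.
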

\vspace*{0.5cm}
\begin{proof}
By table $1$, $C_1$ may degenerate to $B_1$, $B_3$, $B_4(\al)$ or $B_5(\be)$.
Since $L(x)^2-L(x)R(x)=0$ in $C_1$, but not in $B_1$, there is no degeneration
from $C_1$ to $B_1$. There is a degeneration $C_1\ra_{\rm deg}B_4(\al)$ for $\al\neq 0,1$,
see $6.45$. This is also true for $\al=0$ and $\al=1$ by transitivity. We have
$C_1\ra_{\rm deg}C_3\ra_{\rm deg}B_4(1)$, see $6.49$, and $C_1\ra_{\rm deg}C_2\ra_{\rm deg}B_4(0)$,
see $6.50$. Moreover we have $C_1\ra_{\rm deg}B_4(\frac{1}{2})\ra_{\rm deg}B_3$ and
$C_1\ra_{\rm deg}B_4(\al)\ra_{\rm deg}B_5(\al)$. \\
The algebra $C_2$ may degenerate to $B_3$, $B_4(\al)$ or $B_5(\be)$ by table $1$.
It cannot degenerate to $B_3$ by exactly the same argument why $B_1$ cannot degenerate to
$B_3$, see proposition $\ref{4.2}$: all $B\in \ov{O(C_2)}$ either satisfy $\dim (B\cdot B)\ge 2$
or $\dim \CL(B)\ge 2$. However we have $\dim B_3\cdot B_3=\dim \CL(B_3)=1$.
Similarly $C_2$ degenerates to $B_5(\be)$ if and only if $\be=0$ or $\be=1$. Since
$(R(x)-\frac{1}{3}\tr (R(x))\id)^2=0$ for $C_2$, and this holds for $B_4(\al)$ if and only
if $\al=0$, it follows that $C_2$ only degenerates to $B_4(0)$ in this family.
Similarly, $C_2$ cannot degenerate to $B_5(\be)$ for $\be\neq 0,1$. We have 
$C_2\ra_{\rm deg} B_4(0)\ra_{\rm deg} B_5(0)$, see $6.50$. \\
The algebra $C_3$ satisfies $\dim \CR(C_3)=2$. It can only degenerate to algebras
$B$ with $\dim \CR(B)\ge 2$, see lemma $\ref{inv4}$. Using table $1$ we see that
the only candidates are $B_4(1)$ and $B_5(0)$. We have $C_3\ra_{\rm deg} B_4(1)\ra_{\rm deg} B_5(0)$, 
see $6.49$. \\
The algebra $C_4$ satisfies $\dim \CL(C_4)=2$. Among the algebras $B_3$, $B_4(\al)$, $B_5(\be)$
only $B_4(0)$ and $B_5(0)$ satisfy  $\dim \CL(B)\ge 2$. In this case we have the
degenerations $C_4\ra_{\rm deg} B_4(0)\ra_{\rm deg} B_5(0)$, see $6.46$. \\
Suppose that $C_5(\al)$ degenerates to $B_4(\ov{\al})$. We may assume that $\al\neq 0,-1$ by 
table $1$. Then $\dim \Der_{(0,1,\frac{-\al}{1+\al})}(C_5(\al))=4$ and
$\dim \Der_{(0,1,\frac{-\al}{1+\al})}(B_4(\ov{\al}))=3$ for $\ov{\al}\neq 0,1$ and $\al+\ov{\al}\neq 0$.
This only leaves $\ov{\al}=-\al$ for $\al\neq 0,-1$, and indeed such a degeneration
exists, see $6.47$. \\
Suppose that $C_5(\al)$ degenerates to $B_3$. Since $\dim \Der_{(0,1,0)}(C_5(0))=6$ and
$\dim \Der_{(0,1,0)}(B_3)=3$, we cannot have $\al=0$. Also $\al=-1$ is impossible since 
$\dim \Der_{(0,0,1)}(C_5(-1))=6$ and $\dim \Der_{(0,0,1)}(B_3)=3$. For the case $\al\neq 0,-1$
we have $\dim \Der_{(0,1,\frac{-\al}{1+\al})}(C_5(\al))=4$ and
$\dim \Der_{(0,1,\frac{-\al}{1+\al})}(B_3)=3$ if $\al\neq -\frac{1}{2}$. This only leaves
$\al= -\frac{1}{2}$, and indeed there is a degeneration $C_5(-\frac{1}{2})\ra_{\rm deg}
B_4(\frac{1}{2})\ra_{\rm deg} B_3$ by transitivity. \\
An algebra $C_5(\al)$ degenerates to an algebra $B_5(\be)$ only if $\be=-\al$ or $\be=1+\al$,
and a degeneration $C_5(\al)\ra_{\rm deg} B_4(-\al)\ra_{\rm deg} B_5(-\al)\simeq B_5(1+\al)$ is 
possible, see $6.47$, $6.51$ and $6.52$. For $\al=0$ and $\al=-1$ this follows again by using 
$(0,1,0)$-derivations and $(0,0,1)$-derivations as above. If $\al\neq 0,-\frac{1}{2},-1$ we have
$\dim \Der_{(0,1,\frac{-\al}{1+\al})}(B_5(\be))=3$ for $\be\neq 0,1$ and $(\al+\be)(\al+1-\be)\neq 0$.
Since this space has dimension $4$ for $C_5(\al)$, this shows the claim except for 
$\al=-\frac{1}{2}$. We need to show that $C_5(-\frac{1}{2})$ degenerates to $B_5(\be)$ if and only
if $\be=\frac{1}{2}$. A degeneration  $C_5(-\frac{1}{2})\ra_{\rm deg}B_5(\be)$ implies a degeneration
$J_{C_5(-\frac{1}{2})}\ra_{\rm deg}J_{B_5(\be)}$ by lemma  $\ref{inv5}$.
We have $J_{C_5(-\frac{1}{2})}\simeq A_2$ and $J_{B_5(\be)}\simeq A_9$ for
$\be\neq \frac{1}{2}$. Since $A_2$ cannot degenerate to $A_9$, see table $1$, the claim follows. \\
By table $1$, $C_6(\be)$ with $\be \neq -1$ may degenerate to $B_1$, $B_4(\al)$, $B_3$ or
$B_5(\ov{\be})$.
For $\be\neq 0$ consider the operator polynomial
\[
T_{\be}(x)=L(x)^2-L(x)R(x)-\frac{\be+1}{\be}R(x)L(x)+\frac{\be+1}{\be}R(x)^2.
\]
It is zero for $C_6(\be)$ but not for $B_1$. This shows that $C_6(\be)$ cannot degenerate to
$B_1$ for $\be\neq 0$. For $C_6(0)$ the operator $T(x)=R(x)L(x)-R(x)^2$ is zero, but not for
$B_1$. Hence $C_6(0)$ cannot degenerate to $B_1$ either. A degeneration  $C_6(\be)\ra_{\rm deg} 
B_4(\al)\ra_{\rm deg}B_5(\al)$ for $\be\neq -1$ is possible, see $6.42-6.44$.
The algebra $C_6(-1)$ can only degenerate to $B_5(0)\simeq B_5(1)$ since $\dim \Der_{(1,1,0)}
(C_6(-1))=5$, but the space of $(1,1,0)$-derivations for $B_3$, $B_4(\al)$ and
$B_5(\ov{\be})$ with $\ov{\be}\neq 0,1$ is only $3$-dimensional. There is a degeneration
$C_6(-1)\ra_{\rm deg} C_5(-1)\ra_{\rm deg}B_5(0)$ by transitivity and $6.52$. \\
By table $1$, $C_7(\ga)$ with $\ga \neq 0,-1$ may degenerate to $B_3$, $B_4(\al)$ or
$B_5(\be)$. The same argument whcih we have used for $C_2$ shows that only $
B_4(0)$ and $B_5(0)$ cannot be excluded. Indeed, there is a degeneration 
$C_7(\ga)\ra_{\rm deg} B_4(0)\ra_{\rm deg}B_5(0)$ for $\ga\neq 0,-1$, see $6.48$.
By definition $\ga\neq 0$, but the case $\ga=-1$ still has to be considered. We have
$\dim \Der_{(1,1,0)}(C_7(-1))=5$, but $\dim \Der_{(1,1,0)}(B)=3$ for all other algebras in question,
except $B_5(0)\simeq B_5(1)$. There is a degeneration $C_7(-1)\ra_{\rm deg} B_5(0)$, see $6.53$.
\end{proof}

\begin{prop}\label{4.7}
The orbit closures for type $B\ra_{\rm deg}A$ are given as follows:
\vspace*{0.5cm}
\begin{center}
\begin{tabular}{c|c}
$A$ & $\partial (O(A))$ \\
\hline
$B_1$ & $A_1,\,A_5,\,A_{11}$    \\
$B_2$ & $A_1,\,A_5,\,A_9,\,A_{10},\,A_{11},\,A_{12}$   \\
$B_3$ & $A_1,\,A_5$    \\
$B_4(\al)$ & $A_1,\,A_5$  \\
$B_5(\be)_{\be\neq \frac{1}{2}}$ & $A_1,\,A_5$  \\
$B_5(\frac{1}{2})$ & $A_1$  
\end{tabular}
\end{center}
\end{prop}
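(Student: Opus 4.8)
The plan is to run the paper's two-step scheme (I: impossibilities via invariants; II: explicit constructions). The first move is to use Table~1 to throw away, for each source algebra, every $A_i$ whose derivation algebra is not strictly larger, since a proper degeneration forces $\dim\Der$ to increase (Lemma~\ref{inv3}). This already disposes of three rows: the only candidates below $B_3$ and $B_5(\be)_{\be\neq\frac12}$ are $A_1,A_5$, and the only candidate below $B_5(\frac12)$ is $A_1$, so for these rows nothing must be excluded and only constructions remain. The surviving candidates are, for $B_1$: $A_2,A_5,A_9,A_{10},A_{11}$; for $B_2$: $A_2,A_5,A_6,A_7,A_9,A_{10},A_{11},A_{12}$; for $B_4(\al)$: $A_2,A_5,A_9,A_{11}$.

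For the exclusions I would first deploy the cheap invariants. A short computation shows that in $B_1$ and $B_2$ the left multiplications $L(x)$ are lower triangular with constant diagonal $x_1$, so $c_{1,1}(B_1)=c_{1,1}(B_2)=3$; since $c_{1,1}(A_2)=1$, $c_{1,1}(A_6)=1$, $c_{1,1}(A_7)=2$, Lemma~\ref{inv1} kills $B_2\ra_{\rm deg}A_2,A_6,A_7$ and $B_1\ra_{\rm deg}A_2$ in one stroke. Next, $A_{10}$ leaves $\partial O(B_1)$ by Lemma~\ref{inv2}: the algebra $B_1$ satisfies $(R(x)-\frac13\tr(R(x))\id)^2=0$, whereas in $A_{10}$ one has $\tr R(x)=0$ but $R(x)^2\neq0$. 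For $B_4(\al)$ I would note that all $R(x)$ are nilpotent, so $B_4(\al)$ is complete; by Lemma~\ref{inv2} with $T(x)=R(x)^3$ it can only degenerate to complete algebras, which removes the non-complete $A_2$ and $A_{11}$.

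The real obstacle is ruling out $A_9$ from $\partial O(B_1)$ and from $\partial O(B_4(\al))$, and I expect essentially all the work to sit here. The elementary invariants are powerless: $A_9$ is commutative with $L(x)=R(x)$, $L(x)^2=0$ and $\tr L(x)=0$, so it satisfies \emph{every} single-variable operator identity of degree $\ge2$ and carries no trace invariant, while $\dim(A\cdot A)$, $\dim\CL$, $\dim\CR$ and the refined counts $\dim\Der_{(\al,\be,\ga)}$ never exceed the corresponding values for $A_9$ — all these tests point the wrong way. I would therefore argue geometrically. For $B_1$ the needed fact is already produced inside Proposition~\ref{4.2}: the explicit contraction there (Lemma~\ref{2.7} together with an upper-triangular $g_t$ in the basis where the $L(x)$ are simultaneously triangular) shows that every $B\in\ov{O(B_1)}$ satisfies $\dim(B\cdot B)\ge2$ or $\dim\CL(B)\ge2$; as $\dim(A_9\cdot A_9)=\dim\CL(A_9)=1$, this excludes $A_9$. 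For $B_4(\frac12)$ the associated commutative algebra settles it: $J_{B_4(1/2)}\simeq A_5$ while $J_{A_9}\simeq A_9$, and $A_5$ does not degenerate to $A_9$ by Proposition~\ref{4.1}, so Lemma~\ref{inv5} forbids $B_4(\frac12)\ra_{\rm deg}A_9$. The genuinely hard case is $B_4(\al)$ with $\al\neq\frac12$: there $J_{B_4(\al)}\simeq A_{10}$ and $A_{10}\ra_{\rm deg}A_9$ \emph{does} hold (Proposition~\ref{4.1}), so the $J$-criterion is inconclusive. Since the operators $L(e_1),L(e_2)$ of $B_4(\al)$ commute and are nilpotent, they are simultaneously strictly upper triangular, and I would repeat verbatim the orbit computation of Proposition~\ref{4.2} to obtain a dichotomy of the same flavour on $\dim(B\cdot B)$ and an annihilator dimension, excluding $A_9$; should this prove too delicate, Borel's closed orbit lemma \cite{BOR} applied to the Borel subgroup of upper-triangular matrices provides the backup argument.

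Finally, for the constructions, every source degenerates to the zero algebra $A_1$ via $g_t=t^{-1}\id$. The remaining positive degenerations reduce to a handful of explicit contractions listed in Section~6, namely $B_1\ra_{\rm deg}A_{11}$, $B_2\ra_{\rm deg}A_{12}$, $B_3\ra_{\rm deg}A_5$, $B_4(\al)\ra_{\rm deg}A_5$ and $B_5(\be)\ra_{\rm deg}A_5$, after which transitivity through Proposition~\ref{4.1} supplies the rest: $A_{12}\ra_{\rm deg}A_9,A_{10},A_{11},A_5$ completes the closure of $B_2$, and $A_{11}\ra_{\rm deg}A_5\ra_{\rm deg}A_1$ completes the closures of $B_1,B_3,B_4(\al)$ and $B_5(\be)$.
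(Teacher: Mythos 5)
Most of your proposal tracks the paper's proof: the Table~1 pruning, the $c_{1,1}$-invariant to exclude $A_2,A_6,A_7$ below $B_1$ and $B_2$, completeness to exclude $A_2$ and $A_{11}$ below $B_4(\al)$, the re-use of the orbit computation from Proposition~\ref{4.2} to rule out $B_1\ra_{\rm deg}A_9$, and the constructions via $6.54$--$6.58$ plus transitivity. Your direct exclusion of $B_1\ra_{\rm deg}A_{10}$ via the identity $(R(x)-\frac{1}{3}\tr(R(x))\id)^2=0$ is a valid alternative to the paper's transitivity argument through $A_9$, and your $J$-algebra argument for $B_4(\frac12)\not\ra_{\rm deg}A_9$ (using $J_{B_4(1/2)}\simeq A_5$, $J_{A_9}\simeq A_9$ and Lemma~\ref{inv5}) is correct --- and in fact welcome, since $\al=\frac12$ is exactly the value at which the generalized-derivation count below becomes inconclusive.

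The genuine gap is the exclusion $B_4(\al)\not\ra_{\rm deg}A_9$ for $\al\neq\frac12$, which you leave as a sketch resting on a false premise. You assert that the counts $\dim\Der_{(\al,\be,\ga)}$ (and $\dim\CL$, $\dim\CR$) ``never exceed the corresponding values for $A_9$,'' but this is wrong, and it is precisely the invariant the paper uses: with $\ga=\frac{\al}{1-\al}$ one computes $\dim\Der_{(0,1,\ga)}(B_4(\al))=4$ while $\dim\Der_{(0,1,\ga)}(A_9)=3$ for $\al\neq 0,\frac12,1$, so Lemma~\ref{inv3} forbids the degeneration; for $\al=0$ one has $\dim\CL(B_4(0))=2>1=\dim\CL(A_9)$ (equivalently, $(0,1,0)$-derivations), and for $\al=1$ the mirror statement with $\CR$ and $(0,0,1)$-derivations. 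The subtlety you missed is that for triples with first entry $0$ the identity map need not be a generalized derivation, so these dimensions can exceed those of a ``more degenerate'' algebra. Your substitute --- repeating ``verbatim'' the orbit computation of Proposition~\ref{4.2}, with Borel's closed orbit lemma as a hedge --- is not an argument: that computation exploited the specific product structure of $B_1$ (all nonzero products in the deformed basis of the form $(e_i\cdot e_3)_t=f_{33}(t)e_i$), whereas $B_4(\al)$ is a complete, nilpotent algebra with a different orbit geometry, so the claimed dichotomy on $\dim(B\cdot B)$ and an annihilator dimension would have to be exhibited, not presumed. As written, the generic case $\al\neq 0,\frac12,1$ of the key exclusion is unproven.
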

\vspace*{0.5cm}
\begin{proof}
By table $1$, $B_1$ can only degenerate to $A_1$, $A_2$, $A_5$, $A_9$, $A_{10}$ or  $A_{11}$.
Since $c_{1,1}(B_1)=3$ and $c_{1,1}(A_2)=1$, $B_1$ cannot degenerate to $A_2$. Also $B_1$
cannot degenerate to $A_9$ by the same argument which showed that $B_1$ cannot degenerate 
to $B_3$. Then $B_1$ also cannot degenerate to $A_{10}$ since $A_{10}\ra_{\rm deg} A_9$. 
On the other hand we have the degenerations $B_1\ra_{\rm deg} A_{11}\ra_{\rm deg} A_5\ra_{\rm deg} A_1$, 
see $6.55$. \\
The algebra $B_2$ cannot degenerate to $A_6$, $A_7$ and $A_2$ since $c_{1,1}(B_2)=3$,
$c_{1,1}(A_2)=c_{1,1}(A_6)=1$, $c_{1,1}(A_7)=2$. By table $1$, only $A_1,\,A_5,\,A_9,\,A_{10},
\,A_{11},\,A_{12}$ remain. These degenerations exist, by $B_2\ra_{\rm deg}A_{12}$ and
transitivity, see $6.54$. \\
By table $1$, $B_3$ can only degenerate to $A_1$ or $A_5$. This is possible, see $6.56$. \\
An algebra $B_4(\al)$ may degenerate to $A_2$, $A_9$, $A_{11}$, $A_5$ or $A_1$, see table $1$.
Since all $B_4(\al)$ are complete, $A_2$ and $A_{11}$ can be excluded. There is no degeneration
to $A_9$, since for $\al \neq 0,1$, $\dim \Der_{(0,1,\frac{\al}{1-\al})}(B_4(\al))=4$ and
$\dim \Der_{(0,1,\frac{\al}{1-\al})}(A_9)=3$. For $\al=0$ consider $(0,1,0)$-derivations
and for $\al=1$ consider $(0,0,1)$-derivations. The degenerations $B_4(\al)\ra_{\rm deg} A_5\ra_{\rm deg}A_1$
are possible: for $\al\neq \frac{1}{2}$ we have $B_4(\al)\ra_{\rm deg}B_5(\al)\ra_{\rm deg} A_5$, 
see $6.57$, and for $\al=\frac{1}{2}$ we have $B_4(\frac{1}{2})\ra_{\rm deg}B_3\ra_{\rm deg} A_5$,
see $6.56$. \\
The degenerations for $B_5(\be)$ are given by table $1$ and $6.57$, $6.58$.
\end{proof}

\begin{prop}\label{4.8}
The orbit closures for type $C\ra_{\rm deg}A$ are given as follows:
\vspace*{0.5cm}
\begin{center}
\begin{tabular}{c|c}
$A$ & $\partial (O(A))$ \\
\hline
$C_1$ & $A_1,\,A_2,\,A_5,\,A_6,\,A_9,\,A_{10}$    \\
$C_2$ & $A_1,\,A_5$   \\
$C_3$ & $A_1,\,A_5$    \\
$C_4$ & $A_1,\,A_5$    \\
$C_5(\al)$ & $A_1,\,A_5$\\
$C_6(\be)_{\be\neq -1}$ & $A_1,\,A_2,\,A_5,\,A_6,\,A_9,\,A_{10}$  \\
$C_6(-1)$ & $A_1,\,A_2,\,A_5$    \\
$C_7(\ga)_{\ga\neq 0}$ & $A_1,\,A_5$
\end{tabular}
\end{center}
\end{prop}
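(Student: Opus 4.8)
\emph{Strategy and the positive part.}
As for the earlier types I proceed by (I) discarding impossible degenerations with invariants and (II) exhibiting the remaining essential contractions. By Table~$1$ the members of $\partial(O(A))$, for $A$ of type $C$, must be type-$A$ algebras of strictly larger derivation dimension, and each such $A$ admits the Lie degeneration $\Lr_2(\C)\oplus\C\ra_{\rm deg}\C^3$, so nothing is obstructed at the level of associated Lie algebras. Rather than build every arrow, I reduce the positive part to one essential contraction per family and then invoke Proposition~\ref{4.1}: for $C_1$ and $C_6(\be)_{\be\neq-1}$ a single degeneration to $A_6$ yields, through $\partial(O(A_6))=\{A_1,A_2,A_5,A_9,A_{10}\}$, the full claimed boundary; for $C_6(-1)$ a degeneration to $A_2$ yields $\{A_1,A_2,A_5\}$ via $\partial(O(A_2))=\{A_1,A_5\}$; and for $C_2,C_3,C_4,C_5(\al),C_7(\ga)$ a single degeneration to $A_5$ (followed by $A_5\ra_{\rm deg}A_1$) gives $\{A_1,A_5\}$. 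The explicit matrices realising these arrows are those recorded in section~$6$.

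\emph{The exclusion toolkit.}
The targets to rule out are $A_2,A_9,A_{10},A_{11},A_{12}$, together with $A_7$ for the idempotent families $C_1,C_6$; each is attacked with the cheapest available invariant. The idempotent algebra $A_2$ has $c_{1,1}(A_2)=d_{1,1}(A_2)=1$, so Lemma~\ref{inv1} excludes it whenever the matching source invariant exists and differs from $1$ (for instance $d_{1,1}(C_2)=3$, $d_{1,1}(C_3)=2$, or $c_{1,1}(C_5(\al)),c_{1,1}(C_7(\ga))\neq1$ in the relevant parameter ranges), while for the complete algebra $C_4$ Lemma~\ref{inv2} with $T(x)=R(x)^3$ does the job. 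The unital algebras $A_{11},A_{12}$ have $\CR=0$, so by Lemma~\ref{inv4} they are impossible for every source with $\dim\CR\geq1$ (e.g.\ $C_1$, where $\dim\CR=1$); likewise $\dim\CR(C_3)=2$ kills $A_9,A_{10},A_{11}$ at once, and $\dim\CL(C_4)=2$ kills $A_9,A_{10},A_{11}$ for $C_4$. For the nilpotent $A_{10}$ I use the degree-two operator identity $(R(x)-\frac{1}{3}\tr R(x)\,\id)^2=0$, valid for $C_2$ and $C_7(\ga)$ (where $R(x)$ is a scalar plus a square-zero matrix) but violated by $A_{10}$ (whose $R(x)$ has a genuine $3\times3$ nilpotent block), whence Lemma~\ref{inv2} applies. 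For $A_9,A_{10}$ in the remaining families I turn to generalized derivations with the adapted triple $(\al',\be',\ga')=(0,1,\frac{-\al}{1+\al})$: one has $\dim\Der_{(0,1,\cdot)}(C_5(\al))=4$ but $\dim\Der_{(0,1,\cdot)}(A_9)=\dim\Der_{(0,1,\cdot)}(A_{10})=3$, so Lemma~\ref{inv3} excludes both; at the single value $\al=-\frac{1}{2}$, where the triple degenerates, $J_{C_5(-1/2)}\simeq A_2$ is associative and Lemma~\ref{inv5} reduces the question to the already-known $A_2\not\ra_{\rm deg}A_9,A_{10}$ of Proposition~\ref{4.1}. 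Finally $A_9$ itself, with $\dim(A_9\cdot A_9)=\dim\CL(A_9)=1$, is removed from $\ov{O(C_2)}$ by the dichotomy established for $B_1\not\ra_{\rm deg}B_3$ in Proposition~\ref{4.2}: every $B\in\ov{O(C_2)}$ satisfies $\dim(B\cdot B)\geq2$ or $\dim\CL(B)\geq2$.

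\emph{The main obstacle.}
The genuinely hard exclusions are the commutative targets that evade all of the above, namely $C_1,C_6(\be)_{\be\neq-1}\not\ra_{\rm deg}A_7$ and $C_6(\be)_{\be\neq-1}\not\ra_{\rm deg}A_{11},A_{12}$. Being commutative, these targets satisfy every commutator-type identity of the source trivially, so only an identity in $L(x)$ (equivalently $R(x)$) alone could separate them; but for $C_1,C_6(\be)$ such identities have degree $\geq3$, and the trace of $L(x)$ mixes the idempotent and the $\Lr_2$-coordinates, so that $c_{1,1}$ does not even exist. Moreover $\dim\CL,\dim\CR,\dim(A\cdot A)$ and $\dim(\CL\cap\CR)$ fail to separate source from target (for $C_6(\be)$ even $\dim\CR=0$). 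What actually distinguishes them is the Peirce data of the idempotent: the idempotent $e_3$ of $C_1$ (and of $C_6(\be)$) has a one-dimensional Peirce-one space, whereas the idempotent of $A_7$, respectively the unit of $A_{11},A_{12}$, has a two-, respectively three-dimensional one. I expect the decisive step to be the control of this idempotent/semisimple datum under contraction: by Lemma~\ref{2.7} a hypothetical degeneration is $\lim_{t\to0}g_t\kringel\la$, and since the left multiplications of $C_1$ and $C_6(\be)$ are simultaneously triangularisable—they stabilise the flag $\langle e_2\rangle\subset\langle e_2,e_3\rangle\subset V$—one may take $g_t^{-1}$ upper triangular, compute the limit product explicitly, and read off a constraint on the limit incompatible with the target's Peirce type. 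This triangularisation-plus-limit analysis, reinforced in the stubborn subcases by Borel's closed orbit lemma (exactly the extra input anticipated in the introduction), is, I expect, the technical heart of the proof; everything else is the invariant bookkeeping above together with the verification of the section-$6$ matrices.
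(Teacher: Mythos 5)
Your positive part and most of your exclusion bookkeeping coincide with the paper's: the reduction to the single essential arrows $C_1,C_6(\be)\ra_{\rm deg}A_6$, $C_6(-1)\ra_{\rm deg}A_2$ and $C_i\ra_{\rm deg}A_5$ (via $B_5(0)$ or $B_3$), followed by transitivity through Proposition \ref{4.1}, is exactly the paper's route, and your toolkit ($c_{i,j}$ and $d_{i,j}$, completeness, annihilator dimensions, the $C_2$-dichotomy from Proposition \ref{4.2}, the $(0,1,\frac{-\al}{1+\al})$-derivations with the $J_{C_5(-\frac12)}\simeq A_2$ fallback) is the paper's. Two of your substitutions are valid alternatives: $\dim\CR(C_1)=1>0=\dim\CR(A_{11})=\dim\CR(A_{12})$ in place of the paper's $\det(L(x))=0$, and the identity $(R(x)-\frac13\tr(R(x))\id)^2=0$ to exclude $A_{10}$ from $C_2$ and $C_7(\ga)$ directly rather than via $A_9$ and transitivity. (You do, however, leave $C_7(\ga)\not\ra_{\rm deg}A_9$ unaddressed; the paper disposes of it by the same lower-triangular orbit computation it uses for $C_2$.)

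The genuine gap is your last paragraph. The exclusions $C_1,C_6(\be)_{\be\neq-1}\not\ra_{\rm deg}A_7$ and $C_6(\be)_{\be\neq-1}\not\ra_{\rm deg}A_{11},A_{12}$ are presented as expectations (``I expect the decisive step to be\dots'') together with a Peirce-decomposition heuristic, but no argument is carried out, so the proof is incomplete precisely at the cases you yourself identify as the technical heart. You have also misjudged the difficulty of one of them: $C_6(\be)\not\ra_{\rm deg}A_{11}$ (hence $\not\ra_{\rm deg}A_{12}$, since $A_{12}\ra_{\rm deg}A_{11}$) needs no orbit analysis at all — the invariant you already deploy for $C_5(\al)$ settles it, because $\dim\Der_{(0,1,\frac{-\be}{\be+1})}(C_6(\be))$ equals $1$ for $\be\neq 0,-1$ and $3$ for $\be=0$, while $\dim\Der_{(0,1,\frac{-\be}{\be+1})}(A_{11})=0$, contradicting Lemma \ref{inv3}. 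For $A_7$ the paper does use the triangularisation you anticipate, but with a different punchline from your Peirce-type constraint: taking $g_t^{-1}=(f_{ij}(t))$ lower-triangular, a commutative limit forces $\lim_{t\to 0}f_{11}(t)=0$, and then every algebra in the orbit closure is either complete or satisfies $c_{1,1}=1$, whereas $A_7$ is commutative, not complete, and has $c_{1,1}(A_7)=2$. Until you actually perform this (or an equivalent) limit computation, the statements $C_1\not\ra_{\rm deg}A_7$ and $C_6(\be)\not\ra_{\rm deg}A_7$ remain unproved.
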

\vspace*{0.5cm}
\begin{proof}
The algebra $C_1$ satisfies $\det (L(x))=0$. Hence it cannot degenerate to $A_{11}$ or $A_{12}$, which
do not satisfy this identity. Furthermore $C_1$ cannot degenerate to $A_7$. To see this
we may assume that the matrix $g_t^{-1}=(f_{ij}(t))$ has lower-triangular form. A direct
computation of the orbit shows that $A\in \ov{O(C_1)}$ can only be commutative if 
$\lim_{t\to 0}f_{11}(t)= 0$. In this case all $A\in \ov{O(C_1)}$ are either complete or satisfy
$c_{1,1}(A)=1$. Since $A_7$ is not complete and satisfies $c_{1,1}(A_7)=2$, it cannot lie in the orbit
closure of $C_1$. On the hand $C_1$ degenerates to $A_6$, see $6.59$ and hence to $A_{10}$,
$A_2$, $A_9$, $A_5$ and $A_1$ by transitivity. By table $1$ there are no other posibilities. \\
The algebra $C_2$ can only degenerate to $A_{10}$, $A_2$, $A_9$, $A_{11}$, $A_5$ or $A_1$, see
table $1$. Since $c_{1,1}(C_2)=2$ and $c_{1,1}(A_2)=1$, $c_{1,1}(A_{11})=3$ we can exclude $A_2$ 
and $A_{11}$. $C_2$ cannot degenerate to $A_9$ by the same argument why $C_2$ cannot degenerate to
$B_3$, see proposition $\ref{4.6}$: all $A\in \ov{O(C_2)}$ either satisfy $\dim (A\cdot A)\ge 2$
or $\dim \CL(A)\ge 2$. However we have $\dim A_9\cdot A_9=\dim \CL(A_9)=1$. By transitivity,
$C_2$ cannot degenerate to $A_{10}$. On the other hand we have $C_2\ra_{\rm deg} B_5(0)\ra_{\rm deg}A_5$. \\
The algebra $C_3$ has the same possible candidates for degeneration as $C_2$, see table $1$.
Since $d_{1,1}(C_3)=2$ and $d_{1,1}(A_2)=1$, $d_{1,1}(A_{11})=3$ we can exclude $A_2$ and $A_{11}$. 
Since $\dim \CR(C_3)=2$ and $\dim \CR(A_9)=\dim \CR(A_{10})=1$, $C_3$ cannot degenerate to
$A_9$ or $A_{10}$ by lemma $\ref{inv4}$. By transitivity we have $C_3\ra_{\rm deg} \ra_{\rm deg} B_5(0)
\ra_{\rm deg}A_5$. \\
For $C_4$ it is again enough to exclude $A_{10}$, $A_2$, $A_9$, $A_{11}$, see table $1$.
Since $C_4$ is complete, $A_2$ and $A_{11}$ are not possible. Since $\dim \CL(C_3)=2$ and 
$\dim \CL(A_9)=\dim \CL(A_{10})=1$, $C_4$ cannot degenerate to $A_9$ or $A_{10}$ by lemma $\ref{inv4}$.
By transitivity we have $C_4\ra_{\rm deg} B_5(0)\ra_{\rm deg}A_5$. \\
For an algebra $C_5(\al)$ it is again anough to exclude $A_{10}$, $A_2$, $A_9$, $A_{11}$.
Since $C_5(0)$ is complete, it cannot degenerate to $A_2$ or $A_{11}$. Since $d_{1,1}(C_5(\al))=2$
for $\al\neq 0$, no algebra $C_5(\al)$ can degenerate to $A_2$ or $A_{11}$. 
We can also exclude $A_9$, and hence $A_{10}$ by considering $(0,1,\frac{-\al}{1+\al})$-derivations, 
for $\al\neq 0, \frac{-1}{2},-1$ as in the case of possible degenerations from $C_5(\al)$ to $B_3$,
see proposition $\ref{4.6}$. For $\al=0$ we use
$(0,1,0)$-derivations, and for  $\al=-1$ we use $(0,0,1)$-derivations. For $\al=\frac{1}{2}$
we note that $J_{C_5(-\frac{1}{2})}\simeq A_2$ and $J_{A_9}\simeq A_9$, so that we can exclude a 
degeneration from $C_5(-\frac{1}{2})$ to $A_9$ by lemma $\ref{inv5}$. By transitivity we have
$C_5(-\frac{1}{2})\ra_{\rm deg}B_3\ra_{\rm deg}A_5$ and $C_5(\al)\ra_{\rm deg}B_5(-\al)\ra_{\rm deg}A_5$
for $\al\neq -\frac{1}{2}$. \\
An algebra $C_6(\be)$ cannot degenerate to $A_7$ by the same argument why $C_1$ cannot degenerate
to $A_7$. The space $\Der_{(0,1,\frac{-\be}{\be+1})}(C_6(\be))$ is $1$-dimensional for $\be\neq 0,-1$
and $3$-dimensional for $\be=0$. Since $\dim \Der_{(0,1,\frac{-\be}{\be+1})}(A_{11})=0$ for $\be\neq -1$,
no algebra $C_6(\be)$ with $\be\neq -1$ can degenerate to $A_{11}$, and hence to $A_{12}$. 
On the other hand, $C_6(\be)$ degenerates to $A_6$ for $\be\neq -1$, see $6.60$, and 
hence to $A_2$, $A_{10}$, $A_9$, $A_5$ and $A_1$. For $\be=-1$, $C_6(-1)$ cannot degenerate to
$A_9$, $A_{10}$, $A_{11}$ and $A_{12}$ since $\dim \Der_{(1,1,0)}(C_6(-1))=5$ and the space of
$(1,1,0)$-derivations for $A_9,\ldots ,A_{12}$ is only $3$-dimensional. $C_6(-1)$ degenerates to $A_2$, 
see $6.61$, and hence to $A_5$ and $A_1$. \\
An algebra $C_7(\ga)$ cannot degenerate to $A_2$, since $d_{1,1}(C_7(\ga))=3$ and $d_{1,1}(A_2)=1$.
Because we have
\[
c_{i,j}(C_7(\ga))=\frac{(2\ga^i+(\ga+1)^i)(2\ga^j+(\ga+1)^j)}{2\ga^{i+j}+(\ga+1)^{i+j}},
\]
and $c_{i,j}(A_{11})=3$, there is no degeneration from  $C_7(\ga)$ to $A_{11}$. Suppose that
an algebra $C_7(\ga)$ degenerates to $A_9$. We may assume that this is realized via
$g_t^{-1}=(f_{ij}(t))$ having lower-triangular form. A direct computation of the orbit 
then shows that $A\in \ov{O(C_7(\ga))}$ can only be commutative if 
$\lim_{t\to 0}f_{11}(t)= 0$. In this case all $A\in \ov{O(C_7(\ga))}$ satisfy $\dim \CL(A)\ge 2$. Since
this is not true for $A_9$, the claim follows. By transitivity, $C_7(\ga)$ cannot degenerate 
to $A_{10}$ either. On the other hand $C_7(\ga)\ra_{\rm deg}B_5(0)\ra_{\rm deg} A_5$.
\end{proof}

\begin{prop}\label{4.9}
The orbit closures for type $E_{\la}\ra_{\rm deg}B$, $\la\neq 0,1$ are given as follows:
\vspace*{0.5cm}
\begin{center}
\begin{tabular}{c|c}
$A$ & $\partial (O(A))$ \\
\hline
$E_{1,\la}(\al)_{\al\neq -1,-\la}$ & $B_4(0),\,B_5(0)$  \\
$E_{1,\la}(-1)$ & $B_5(0)$  \\
$E_{1,\la}(-\la)\simeq E_{1,\frac{1}{\la}}(-1)$ & $B_5(0)$  \\
$E_{2,\la}$ & $B_4(0),\,B_5(0)$   \\
$E_3$ & $B_3,\,B_4(\al),\,B_5(\be)$    \\
$E_4$ & $B_3,\,B_4(\al),\,B_5(\be)$    \\
$E_5(\be)_{\be\neq -\frac{1}{2},-1}$ & $B_3,\,B_4(\al),\,B_5(\be)$  \\
$E_5( -\frac{1}{2})$ & $B_3,\,B_5(\be)$   \\
$E_5(-1)$ & $B_4(1),\,B_5(0)$   \\
$E_6$ & $B_3,\,B_5(\be)$
\end{tabular}
\end{center}
\end{prop}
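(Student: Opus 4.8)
The plan is to follow the two–step scheme (impossibility via invariants, then explicit construction) already used for the preceding degeneration types, exploiting that every target carries the same associated Lie algebra. Since $\Lg_B=\Ln_3(\C)$ for all algebras of type $B$, Lemma~\ref{inv5} shows that a degeneration $E_\la\ra_{\rm deg}B$ forces $\Lr_{3,\la}(\C)\ra_{\rm deg}\Ln_3(\C)$, which is exactly the standing hypothesis $\la\neq 1$. First I would read the candidate targets off Table~$1$: a proper degeneration requires $\dim\Der(E_\la)<\dim\Der(B)$. Since $\dim\Der(B_2)=1$ and no algebra of type $E$ has derivation dimension $0$, the algebra $B_2$ is excluded throughout; since $\dim\Der(B_1)=2$, the target $B_1$ survives a priori only for the derivation–dimension–one algebras $E_3$, $E_4$ and $E_5(\be)_{\be\neq-1}$.

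The second step is to recycle the operator identities already produced in the proof of Proposition~\ref{4.5}. There $E_3$ and $E_6$ were shown to satisfy $S_2(x)=0$, the algebra $E_4$ to satisfy $T_{-1,\frac{1}{2}}(x)=0$, and $E_5(-\frac{1}{2})$ to satisfy the special identity $L(x)^3-L(x)R(x)L(x)+R(x)L(x)^2-R(x)^2L(x)=0$. I would check that $B_1$ violates the relevant identities (and is not complete, which handles $E_5(0)$), so that by Lemma~\ref{inv2} the target $B_1$ is removed for all of $E_3$, $E_4$ and $E_5(\be)_{\be\neq-1}$, confirming that $B_1$ never occurs. The same identities perform the decisive work on the $B_4$–family: verifying that $B_4(\al)$ fails both the special identity of $E_5(-\frac{1}{2})$ and the relation $S_2(x)=0$ excludes the whole family $B_4(\al)$ from $\partial O(E_5(-\frac{1}{2}))$ and from $\partial O(E_6)$, whereas $B_3$ and every $B_5(\be)$ survive because they do satisfy these relations.

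For the remaining parameter bookkeeping I would argue as in Propositions~\ref{4.2} and~\ref{4.6}. Using Lemma~\ref{2.7} to bring $g_t^{-1}$ into upper–triangular form, one runs the dichotomy that every $B$ in the closure satisfies either $\dim(B\cdot B)\ge 2$ or $\dim\CL(B)\ge 2$. Since $\dim(B_3\cdot B_3)=\dim\CL(B_3)=1$, this excludes $B_3$ wherever the argument applies, and since $\dim(B_5(\be)\cdot B_5(\be))=1$ while $\dim\CL(B_5(\be))\ge 2$ holds only for $\be\in\{0,1\}$, it pins any reachable $B_5$–target to $B_5(0)\simeq B_5(1)$. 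For the $B_4$–family I would use the identity $(R(x)-\frac{1}{3}\tr(R(x))\id)^2=0$, which holds for the generic algebras $E_{1,\la}(\al)$ and $E_{2,\la}$ and, as in Proposition~\ref{4.2}, forces the target to be $B_4(0)$. For the isolated fibre $E_{1,\la}(-1)$ of derivation dimension $3$ (whence $B_4(\al)$ is excluded by dimension, and which via $E_{1,\la}(-\la)\simeq E_{1,1/\la}(-1)$ also covers that algebra) the trace invariants $c_{i,j}$, $d_{i,j}$ of Lemma~\ref{inv1} together with the dichotomy leave only $B_5(0)$, while for $E_5(-1)$ the $(\al,\be,\ga)$–derivation counts of Lemma~\ref{inv3} isolate the values $B_4(1)$ and $B_5(0)$.

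Finally I would realize all surviving minimal degenerations by the explicit matrices collected in section~$6$ and obtain the rest by transitivity, invoking the internal relations $B_4(\al)\ra_{\rm deg}B_5(\al)$ and $B_4(\frac{1}{2})\ra_{\rm deg}B_3$ from Proposition~\ref{4.2}. The main obstacle will be the two exceptional fibres $E_5(-\frac{1}{2})$ and $E_6$: there the coarse invariants (the dimensions of $\Der$, of $A\cdot A$ and of the annihilators) do not separate $B_4$ from $B_5$, so excluding the entire $B_4$–family rests on the finely tuned operator identities carried over from Proposition~\ref{4.5}, while the positive direction demands genuinely parametrized degeneration matrices realizing the full families $B_4(\al)$ and $B_5(\be)$ for $E_3$, $E_4$ and $E_5(\be)_{\be\neq-\frac{1}{2},-1}$.
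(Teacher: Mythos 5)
Your overall architecture matches the paper's (Table~$1$ plus operator identities for step I, the matrices of section~$6$ plus transitivity for step II), and the exclusion of $B_1$, $B_2$, the pinning of the $B_5$-targets to $B_5(0)$ for $E_{1,\la}(\al)$ and $E_{2,\la}$ via the triangular-matrix dichotomy, and the use of $(R(x)-\frac{1}{3}\tr(R(x))\id)^2=0$ against $B_4(\al)_{\al\neq 0}$ are all in line with what the paper does. But there is a genuine gap exactly at the point you yourself identify as the crux: excluding the \emph{entire} family $B_4(\al)$ from $\partial O(E_5(-\frac{1}{2}))$ and $\partial O(E_6)$ cannot be done with the operator identities you list, because $B_4(0)$ satisfies all of them. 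In $B_4(0)$ one has $L(x)^3=L(x)^2R(x)=R(x)L(x)^2=R(x)L(x)R(x)=L(x)R(x)L(x)=R(x)^2L(x)=R(x)^2=0$ and $\tr R(x)=0$ for every $x$, so $S_2(x)=0$, the special identity $L(x)^3-L(x)R(x)L(x)+R(x)L(x)^2-R(x)^2L(x)=0$, and $(R(x)-\frac{1}{3}\tr(R(x))\id)^2=0$ all hold there; Lemma~\ref{inv2} therefore yields no obstruction to $E_5(-\frac{1}{2})\ra_{\rm deg}B_4(0)$ or $E_6\ra_{\rm deg}B_4(0)$. The paper closes this case differently: it computes the orbit of $E_5(-\frac{1}{2})$ through a lower-triangular $g_t^{-1}=(f_{ij}(t))$ and shows that any limit $B$ with $\dim(B\cdot B)<3$ forces $\lim_{t\to 0}f_{11}(t)=0$ and hence $\dim(B\cdot B)\le 1$, which rules out $B_4(\al)$ since $\dim B_4(\al)\cdot B_4(\al)=2$; the operator-identity shortcut is explicitly noted there to work only for $\al\neq 0$. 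The case of $E_6$ then follows by transitivity through $E_5(-\frac{1}{2})\ra_{\rm deg}E_6$.

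A second, smaller soft spot is $E_5(-1)$: you assert that $(\al,\be,\ga)$-derivation counts ``isolate'' $B_4(1)$ and $B_5(0)$, but you do not name weights that actually separate $E_5(-1)$ from $B_4(0)$ or from $B_5(\ov{\be})$ with $\ov{\be}\notin\{0,1\}$, and the paper does not find such invariants either. Instead it again computes the limits along triangular matrices: the non-generic limits of $E_5(-1)$ have all products landing in $\C e_3$ with structure constants satisfying $\al_1\al_4=\al_2\al_3$, and a direct isomorphism check shows these hit $B_5(\ov{\be})$ only for $\ov{\be}(\ov{\be}-1)=0$; the exclusion of $B_4(0)$ combines this limit analysis with $\dim\CL(B)\ge 2$ versus $\dim\CL(B_4(0))=1$. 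So the two exceptional fibres you flag as ``the main obstacle'' really do require the explicit orbit computations rather than the identity machinery of Proposition~\ref{4.5}, and as written your plan would not close them.
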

\vspace*{0.5cm}
\begin{proof}
By table $1$, an algebra $E_{1,\la}(\al)$ can only degenerate to  $B_3$, $B_4(\ov{\al})$ or
$B_5(\ov{\be})$. For $\al=-1$ or $\la=-\al$ all algebras $B_4(\ov{\al})$ can be 
excluded. An algebra  $E_{1,\la}(\al)$ satisfies the identity 
$T(x)=(R(x)-\frac{1}{3}\tr (R(x))\id)^2=0$. Since $B_4(\al),\, \al\neq 0$
do not satisfy it, only $B_3$, $B_4(0)$ and $B_5(\ov{\be})$ remain. There is a degeneration
$E_{1,\la}(\al)\ra_{\rm deg} B_4(0)\ra_{\rm deg} B_5(0)$ for $(1-\la)(\al+1)(\al+\la)\neq 0$, 
see $6.72$. For $\al=-1$ we have $E_{1,\la}(\al)\ra_{\rm deg} B_5(0)$, see $6.73$. An algebra 
$E_{1,\la}(\al)$ cannot degenerate to $B_3$ or $B_5(\ov{\be})$ for $\ov{\be}\neq 0,1$. 
To see this we compute the orbit closure of $E_{1,\la}(\al)$ via a lower-triangular 
matrix $g_t^{-1}=(f_{ij}(t))$. Suppose that $B\in \ov{O(E_{1,\la}(\al))}$ satisfies 
$\dim (B\cdot B)=1$. It follows that $\lim_{t\to 0}f_{11}(t)= 0$, so that the 
only possibly nonzero products in $B$ are of the form $e_1\cdot e_1=\al_1e_2+\al_2e_3$,
$e_1\cdot e_2=\al_3e_3$. In particular, $\dim \CL(B)\ge 2$. This is not the case for
$B_3$ and $B_5(\be)$ with $\be\neq 0,1$. Hence only $B_5(0)\simeq B_5(1)$ lies in the orbit closure.\\
An algebra $E_{2,\la}$ can only degenerate to $B_4(0)$ and $B_5(0)$, and it does for $\la\neq 1$,
see $6.70$. The argument is the same as for $E_{1,\la}(\al)$. Also $E_{2,\la}$ satisfies $T(x)=0$. \\
The algebra $E_3$ cannot degenerate to $B_1$ since $c_{2,2}(E_3)=2$ and $c_{2,2}(B_1)=3$. By table
$1$, $E_3$ can only degenerate to $B_3$, $B_4(\al)$ or $B_5(\be)$. All cases are
possible, see $6.62$ and $6.63$ for $E_3\ra_{\rm deg}B_4(\al)\ra_{\rm deg}B_5(\al)$, and
$E_3\ra_{\rm deg}B_4(\frac{1}{2})\ra_{\rm deg}B_3$ by transitivity. \\
The algebra $E_4$ cannot degenerate to $B_1$ since $c_{1,1}(E_4)=\frac{9}{5}$ and $c_{1,1}(B_1)=3$.
On the other hand we have $E_4\ra_{\rm deg}B_4(\al)$ for $\al\neq 0,1$, see $6.65$,
$E_4\ra_{\rm deg}B_4(0)$, see $6.64$ and $E_4\ra_{\rm deg}E_5(-1)\ra_{\rm deg}B_4(1)$, see $6.66$. \\
An algebra $E_5(\be)$ cannot degenerate to $B_2$ by table $1$. It cannot degenerate to $B_1$
since $T_{\be,\frac{1}{2}}(x)=0$ holds for $E_5(\be)$ with $\be\neq 0$, see proposition $\ref{4.5}$,
but not for $B_1$. If $\be=0$, then $E_5(0)$ is complete, but $B_1$ is not. 
Every algebra $E_5(\be)$ with $\be\neq -1$ degenerates to every algebra $B_5(\ov{\be})$: 
for $\be\neq -\frac{1}{2},-1$ and $\ov{\be}\neq 0$ we have $E_5(\be)\ra_{\rm deg}B_4(\ov{\be})
\ra_{\rm deg}B_5(\ov{\be})$, see $6.66$, and for $\ov{\be}= 0$ we have
 $E_5(\be)\ra_{\rm deg}E_{1,\frac{1}{2}}(\be)\ra_{\rm deg}B_4(0)\ra_{\rm deg}B_5(0)$, see $6.72$.
For $\be=-\frac{1}{2}$ we have $E_5(-\frac{1}{2})\ra E_6\ra B_5(\ov{\be})$, see $6.68$, $6.69$.
Together this shows that we have a degeneration $E_5(\be)\ra_{\rm deg}B_5(\ov{\be})$ for $\be\neq -1$. 
It can also be given explicitly by 
\[
g_t^{-1}= 
\begin{pmatrix}
t^3 & 0 & 0 \\
\frac{\ov{\be}-1}{t} & t & 0 \\
\frac{(\ov{\be}-2-2\be)(\ov{\be}-1)}{(\be+1)t^5} & \frac{2}{t^3} & 1 
\end{pmatrix}. 
\]
The algebra $E_5(-1)$ degenerates to $B_5(\ov{\be})$ only for $\ov{\be}(\ov{\be}-1)=0$.
To see this we consider another basis $(e_1,e_2,e_3)$ for $E_5(-1)$, by interchanging
$e_2$ and $e_3$ from the old basis. Then the left multiplication operators have simultaneous
lower-triangular form. We compute the orbit closure of $E_5(-1)$ via a lower-triangular 
matrix $g_t^{-1}=(f_{ij}(t))$. Suppose that $B\in \ov{O(E_5(-1))}$ satisfies 
$\dim (B\cdot B)=1$. It follows that $\lim_{t\to 0}f_{11}(t)= 0$, and the 
products in $B$ are of the form $e_1\cdot e_1=\al_1e_3$, $e_1\cdot e_2=\al_2e_3$,
$e_2\cdot e_1=\al_3e_3$ and  $e_2\cdot e_2=\al_4e_3$, such that $\al_1\al_4=\al_2\al_3$.
Furthermore we assume that $B$ is not commutative, which implies that $\al_2-\al_3\neq 0$.
A direct computation now shows that such an algebra $B=B(\al_1,\al_2,\al_3,\al_4)$ is isomorphic
to $B_5(\ov{\be})$ if and only if $\ov{\be}(\ov{\be}-1)=0$. We have indeed $E_5(-1)\ra_{\rm deg}B_4(1)
\ra_{\rm deg}B_5(1)\simeq B_5(0)$, see $6.71$. It follows that $E_5(-1)$ cannot
degenerate to $B_3$, because otherwise $E_5(-1)\ra_{\rm deg}B_3\ra_{\rm deg}B_5(\frac{1}{2})$, which
we just excluded. On the other hand $E_5(\be)$ degenerates to $B_3$ for all $\be\neq -1$:
we have $E_5(\be)\ra_{\rm deg}B_4(\frac{1}{2})\ra_{\rm deg}B_3$ for $\be\neq -\frac{1}{2}-1$, 
see $6.66$, and $E_5(-\frac{1}{2})\ra_{\rm deg}E_6\ra_{\rm deg}B_3$, see $6.67$.
We already saw that $E_5(\be)$ degenerates to every $B_4(\al)$ for $\be\neq -\frac{1}{2},-1$.
On the other hand $E_5( -\frac{1}{2})$ does not degenerate to any $B_4(\al)$: compute the orbit
closure of $E_5(-\frac{1}{2})$ in the same way as for $E_5(-1)$ above. Assume that
$B\in \ov{O(E_5(-\frac{1}{2}))}$ satisfies $\dim (B\cdot B)<3$. It follows that 
$\lim_{t\to 0}f_{11}(t)= 0$. In this case however it is obvious that  $\dim (B\cdot B)\le 1$
for all algebras in the orbit closure. Since  $\dim B_4(\al)\cdot B_4(\al)=2$, the claim
follows. For $\al\neq 0$ we could have used another argument. Since $E_5(-\frac{1}{2})$
satisfies the operator identity $T(x)=(R(x)-\frac{1}{3}\tr (R(x))\id)^2=0$, but
$B_4(\al)$ for $\al\neq 0$ does not, the claim follows again for $\al\neq 0$. Suppose that algebra
$E_5(-1)$ degenerates to $B_4(\al)$. Then, by transitivity, it degenerates to $B_5(\al)$. As we have
seen, this is only possible for $\al=0,1$. In fact, there is a degeneration $E_5(-1)\ra_{\rm deg}B_4(1)$,
see $6.71$. There is no degeneration $E_5(-1)\ra_{\rm deg}B_4(0)$: suppose that $B\in \ov{O(E_5(-1))}$
with $\dim B\cdot B=2$. Computing the orbit via a lower-triangular matrix $g_t^{-1}=(f_{ij}(t))$
as above, we see that $\lim_{t\to 0}f_{11}(t)= 0$ and the products in $B$ are of the form 
$e_1\cdot e_1=\al_1e_2+\al_2e_3$, $e_1\cdot e_2=\al_3e_3$, $e_2\cdot e_1=\al_4e_3$ and  
$e_2\cdot e_2=\al_5e_3$ with $\al_1\neq 0$. If $B$ satisfies the operator identity $T(x)=0$ as 
$B_4(0)$ does, then $\al_4=\al_5=0$. In this case we have $\dim \CL(B)\ge 2$, whereas 
$\dim \CL(B_4(0))=1$. This shows the claim. \\
By table $1$, the algebra $E_6$ can only degenerate to $B_3$, $B_4(\al)$ or $B_5(\be)$.
By transitivity $B_4(\al)$ is not possible, because otherwise we would have a degeneration
$E_5(-\frac{1}{2})\ra_{\rm deg}E_6\ra_{\rm deg}B_4(\al)$, which is impossible. There are degenerations
$E_6\ra_{\rm deg}B_3$ and $E_6\ra_{\rm deg}B_5(\be)$, see $6.67$ and $6.68$, $6.69$.
\end{proof}

\begin{prop}
The orbit closures for type $E_{\la}\ra_{\rm deg}A$, $\la\neq 0$ are given as follows:
\vspace*{0.5cm}
\begin{center}
\begin{tabular}{c|c}
$A$ & $\partial (O(A))$ \\
\hline
$E_{1,\la}(\al)_{\al\neq -1}$ & $A_1,\,A_5$  \\
$E_{1,\la}(-1),\, \la\neq 1$ & $A_1,\,A_5$  \\
$E_{1,1}(-1)$ & $A_1$  \\
$E_{2,\la}$ & $A_1,\,A_5$   \\
$E_3$ & $A_1,\,A_5,\,A_9,\,A_{10}$    \\
$E_4$ & $A_1,\,A_5,\,A_9,\,A_{10}$    \\
$E_5(\be)_{\be\neq -\frac{1}{2},-1}$ & $A_1,\,A_5,\,A_9,\,A_{10}$  \\
$E_5( -\frac{1}{2})$ & $A_1,\,A_5,\,A_9$   \\
$E_5(-1)$ & $A_1,\,A_5$   \\
$E_6$ & $A_1,\,A_5,\,A_9$
\end{tabular}
\end{center}
\end{prop}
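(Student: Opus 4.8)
The plan is to reduce almost everything to the previously established types $E_\la\ra_{\rm deg}B$ and $B\ra_{\rm deg}A$, so that only the targets $A_9,A_{10}$ demand genuinely new arguments. First I would use table $1$ to list, for each $E$-algebra, the candidate targets $A_i$: a proper degeneration strictly increases $\dim\Der$, so only the $A_i$ of larger derivation dimension qualify. The induced Lie degeneration is always $\Lr_{3,\la}(\C)\ra_{\rm deg}\C^3$, which exists by the Hasse diagram at the beginning of this section and hence rules nothing out among the abelian-type targets. For the existence statements, every algebra degenerates to the trivial algebra $A_1$ via $g_t^{-1}=\diag(t,t,t)$ (then $(x\cdot y)_t=t(x\cdot y)\to0$), and the degeneration to $A_5$ follows by transitivity from $E_\la\ra_{\rm deg}B_5(0)$ (Proposition \ref{4.9}) and $B_5(0)\ra_{\rm deg}A_5$ (Proposition \ref{4.7}); this covers every row except $E_{1,1}(-1)$, whose derivation dimension $6$ forbids $A_5$ by table $1$ and leaves only $A_1$.

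The rigid targets among $A_2,A_6,A_7,A_{11},A_{12}$ are eliminated by the generalized trace invariants (Lemma \ref{inv1}) and by completeness (Lemma \ref{inv2}). Concretely I would compute $c_{1,1}$ and $d_{1,1}$ of the $E$-algebra in question and compare with $c_{1,1}(A_2)=d_{1,1}(A_2)=1$, $d_{1,1}(A_6)=1$, $d_{1,1}(A_7)=2$ and $c_{i,j}(A_{11})=c_{i,j}(A_{12})=3$; for instance $d_{1,1}(E_{1,\la}(\al))=d_{1,1}(E_5(\be))=3$ for the relevant parameters disposes of $A_2,A_6,A_7$ simultaneously, while $c_{1,1}(E_4)=\frac{9}{5}$ disposes of all five at once. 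For $A_{11},A_{12}$ one uses that $c_{i,j}(E_{1,\la}(\al))$ is a quotient of power sums of the three numbers $\al,\al+1,\al+\la$, which are never all equal; hence $c_{i,j}\equiv3$ is impossible and some index separates the algebras. When the $E$-algebra is complete (such as $E_5(0)$ or $E_{1,\la}(0)$) the non-complete algebras $A_2,A_6,A_7,A_{11},A_{12}$ are excluded at once by Lemma \ref{inv2} with $T(x)=R(x)^3$.

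The core of the proof concerns $A_9$ and $A_{10}$, for which no trace invariant suffices; here I would reuse the explicit one-parameter orbit computations already carried out in Proposition \ref{4.9}. For $E_{1,\la}(\al)$ and $E_{2,\la}$ the triangular realization there shows that every $B\in\ov{O(A)}$ with $\dim(B\cdot B)=1$ satisfies $\dim\CL(B)\ge2$; since $\dim\CL(A_9)=1$ this excludes $A_9$, and then $A_{10}$ is excluded by transitivity through $A_{10}\ra_{\rm deg}A_9$ (Proposition \ref{4.1}). For $E_5(-1)$ the same computation forces the limiting products into a rank-$\le1$ family, $\al_1\al_4=\al_2\al_3$, a relation violated by $A_9$. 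For $E_5(-\frac{1}{2})$ every algebra in the closure has $\dim(B\cdot B)\in\{1,3\}$, so $A_{10}$ (with $\dim(A_{10}\cdot A_{10})=2$) is impossible, and $E_6$ inherits this exclusion via $E_5(-\frac{1}{2})\ra_{\rm deg}E_6$. The surviving positive degenerations to $A_9,A_{10}$ for $E_3,E_4,E_5(\be)$ and $E_6$ are produced by explicit families collected in section $6$, with $A_9$ obtained from $A_{10}$ by transitivity whenever both occur.

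The main obstacle is exactly this last block: the exclusions of $A_9$ and $A_{10}$ are invisible to any invariant and genuinely require controlling the full limit $\lim_{t\to0}g_t\kringel\la$. The decisive simplification is that the triangular subgroups needed here are the very ones analysed in Proposition \ref{4.9}, so no new limit must be computed, only read off. The remaining delicate point is the accidental coincidence of low-index trace invariants at isolated parameter values (for example when $\la$ is a primitive sixth root of unity and $c_{1,1}(E_{1,\la}(\al))=3$), which is resolved by passing to a higher $c_{i,j}$ as indicated above.
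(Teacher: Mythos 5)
Your overall strategy is the one the paper uses -- table $1$ plus the $c_{i,j}$/$d_{i,j}$ invariants for the rigid targets, and the lower-triangular orbit computations recycled from the type $E_{\la}\ra_{\rm deg}B$ analysis to kill $A_9$ and $A_{10}$ -- and your substitutions are legitimate: the paper excludes $A_9$ from $\ov{O(E_{1,\la}(\al))}$ via $\dim\Der_{(0,1,0)}$, and excludes $A_{10}$ from $\ov{O(E_5(-\frac{1}{2}))}$ and $\ov{O(E_6)}$ via the operator identity $(R(x)-\frac{1}{3}\tr(R(x))\id)^2=0$, whereas you use $\dim\CL(B)\ge 2$ and the gap $\dim(B\cdot B)\neq 2$ in the closure; both routes are sound, and your $E_5(-1)$ and $E_6$ arguments coincide with the paper's.

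There is, however, one concrete gap: your only source for $A_5\in\partial O(E_{1,\la}(\al))$ and $A_5\in\partial O(E_{2,\la})$ is transitivity through $E_{\la}\ra_{\rm deg}B_5(0)\ra_{\rm deg}A_5$, and you claim this ``covers every row except $E_{1,1}(-1)$.'' It does not cover $\la=1$. Proposition \ref{4.9} is stated only for $\la\neq 0,1$, and for good reason: $E_{1,1}(\al)$ and $E_{2,1}$ have associated Lie algebra $\Lr_{3,1}(\C)$, which does not degenerate to $\Ln_3(\C)$ (see the Lie algebra Hasse diagram; equivalently, $\dim\Der(E_{1,1}(\al)_{\al\neq -1})=\dim\Der(E_{2,1})=4=\dim\Der(B_5(0))$, so Lemma \ref{inv3} forbids it). Hence no algebra of type $B$ lies in these orbit closures and your transitivity chain is unavailable, while the proposition still asserts $A_5\in\partial O(E_{1,1}(\al)_{\al\neq-1})$ and $A_5\in\partial O(E_{2,1})$. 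These two degenerations must be exhibited directly; the paper does so with the explicit matrices $6.78$ and $6.79$. A second, smaller point: when you invoke the triangular orbit computation of Proposition \ref{4.9} for $E_{1,\la}(\al)$ to exclude $A_9$, note that that computation was carried out under the hypothesis $\la\neq 1$, so you should either check it extends to $\la=1$ or fall back on the $(0,1,0)$-derivation count, which works uniformly.
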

\vspace*{0.5cm}
\begin{proof}
By table $1$, an algebra $E_{1,\la}(\al)$ can only degenerate to $A_{10}$, $A_2$, $A_9$,
$A_{11}$, $A_5$ or $A_1$. We can exclude $A_2$ and $A_{11}$ because $c_{i,j}(A_2)=1$ and 
$c_{i,j}(A_{11})=3$ for all $i,j\ge 1$, whereas 

\[
c_{i,j}(E_{1,\la}(\al))=\frac{(\al^i+(\al+1)^i+(\al+\la)^i)(\al^j+(\al+1)^j+(\al+\la)^j)}
{\al^{i+j}+(\al+1)^{i+j}+(\al+\la)^{i+j}}.
\]
Furthermore we can exclude $A_9$ and hence $A_{10}$ by transitivity, because
$\dim \Der_{(0,1,0)}(A_9)=3$ and $\dim \Der_{(0,1,0)}(E_{1,\la}(\al))=6$.
For $\la\neq 1$ we have $E_{1,\la}(\al)\ra_{\rm deg} B_5(0)\ra_{\rm deg} A_5\ra_{\rm deg} A_1$.
For $\la=1$ and $\al\neq -1$ we have $E_{1,1}(\al)\ra_{\rm deg} A_5$, see $6.78$. By table $1$,
$E_{1,1}(-1)$ can only degenerate to $A_1$, which is of course possible, see $6.80$. \\
By table $1$, $E_{2,\la}$ can only degenerate to  $A_{10}$, $A_2$, $A_9$, $A_{11}$, $A_5$ or $A_1$.
Because $c_{i,j}(E_{2,\la})=c_{i,j}(E_{1,\la}(-1))$, we can exclude $A_2$ and $A_{11}$.
We can also exclude $A_9$, and hence $A_{10}$ by the same argument why $C_2$ cannot degenerate to
$A_9$, see proposition $\ref{4.8}$. For $\la\neq 1$ we have $E_{2,\la}\ra_{\rm deg} B_5(0)\ra_{\rm deg} 
A_5\ra_{\rm deg} A_1$. For $\la=1$ we have $E_{2,1}\ra_{\rm deg} A_5$, see $6.79$.\\
The algebra $E_3$ cannot degenerate to $A_2$, $A_6$, $A_7$, $A_{11}$, $A_{12}$ because of
$c_{i,j}(A_2)=c_{i,j}(A_6)=1$, $c_{i,j}(A_7)=2$, $c_{i,j}(A_{11})=c_{i,j}(A_{12})=3$ and
\[
c_{i,j}(E_3)=\frac{((-1)^i+1)((-1)^j+1)}{(-1)^{i+j}+1}.
\]
By table $1$, we can exclude $A_3$, $A_8$ and $A_6$. There are degenerations
$E_3\ra_{\rm deg} A_{10}\ra_{\rm deg} A_9\ra_{\rm deg} A_5\ra_{\rm deg} A_1$, see $6.74$.\\
The algebra $E_4$ also degenerates to $A_{10}$, see $6.75$, and hence to $A_9$, $A_5$ and $A_1$.
The other algebras are excluded by table $1$ and the $c_{i,j}$-invariants: we have
\[
c_{i,j}(E_4)=\frac{(2^i+1)(2^j+1)}{2^{i+j}+1}.
\] 
An algebra $E_5(\be)$ can only degenerate $A_{10}$, $A_9$, $A_5$ and $A_1$. This follows from 
table $1$ and the $c_{i,j}$-invariants. We have
\[
c_{i,j}(E_5(\be))=\frac{(\be^i+(\be+1)^i+(\be+\frac{1}{2})^i)(\be^j+(\be+1)^j+(\be+\frac{1}{2})^j)}
{\be^{i+j}+(\be+1)^{i+j}+(\be+\frac{1}{2})^{i+j}}.
\]
For $\be\neq -\frac{1}{2},-1$ there is a degeneration $E_5(\be)\ra_{\rm deg}A_{10}$, see $6.76$, so
that $A_{10}$, $A_9$, $A_5$ and $A_1$ are in the orbit closure. For $\be= -\frac{1}{2}$, the algebra
$E_5( -\frac{1}{2})$ satisfies  $(R(x)-\frac{1}{3}\tr (R(x))\id)^2=0$, but $A_{10}$ does not.
Hence $E_5( -\frac{1}{2})$ does not degenerate to $A_{10}$. It degenerates however to $A_9$, $A_5$ and
$A_1$ by transitivity: $E_5( -\frac{1}{2})\ra_{\rm deg}E_6\ra_{\rm deg} A_9$, see $6.77$.
For $\be=-1$, $E_5(-1)$ does not degenerate to $A_9$: we use the same proof as in proposition
$\ref{4.9}$ which showed that $E_5(-1)$ does not degenerate to an algebra $B_5(\be)$ with
$\be(1-\be)\neq 0$. An algebra $A \in \ov{O(E_5(-1))}$ satisfying $\dim (A\cdot A)=1$ can be represented
by products $e_1\cdot e_1=\al_1e_3$, $e_1\cdot e_2=\al_2e_3$,
$e_2\cdot e_1=\al_3e_3$ and  $e_2\cdot e_2=\al_4e_3$, such that $\al_1\al_4=\al_2\al_3$. If we now
assume that $A$ is commutative, it follows $\al_3=\al_2$ and $\al_1\al_4=\al_2^2$. It is easy
to see that such an algebra cannot be isomorphic to $A_9$. There is a degeneration
$E_5(-1)\ra_{\rm deg}B_5(0)\ra_{\rm deg}A_5$. \\
By table $1$, $E_6$ can only degenerate to $A_{10}$, $A_2$, $A_9$, $A_{11}$, $A_5$ and $A_1$. 
We can exclude $A_2$ and $A_{11}$ as before since $c_{i,j}(E_6)=c_{i,j}(E_3)$. Furthermore
$E_6$ satisfies $(R(x)-\frac{1}{3}\tr (R(x))\id)^2=0$, but $A_{10}$ does not. Hence we may exclude
$A_{10}$. There is a degeneration $E_6\ra_{\rm deg}A_9$, see $6.77$.
\end{proof}

\begin{prop}\label{4.11}
The orbit closures for type $D\ra_{\rm deg}B$ are given as follows:
\vspace*{0.5cm}
\begin{center}
\begin{tabular}{c|c}
$A$ & $\partial (O(A))$ \\
\hline
$D_1$ & $B_4(0),\, B_5(0)$    \\
$D_2(\al)_{\al\neq -1}$ & $B_4(0),\, B_5(0)$  \\
$D_2(-1)$ & $B_5(0)$  
\end{tabular}
\end{center}
\end{prop}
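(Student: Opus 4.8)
The plan is to follow the two-step scheme (impossibility via invariants, then explicit construction) used for the earlier types, exploiting that every target algebra here has associated Lie algebra $\Ln_3(\C)$ while the source algebras have $\Lg_D=\Lr_3(\C)$, so these degenerations refine $\Lr_3(\C)\ra_{\rm deg}\Ln_3(\C)$. First I would use Table $1$ to reduce the candidates. Since $\dim\Der(D_1)=\dim\Der(D_2(\al))=2$ for $\al\neq-1$, the only type-$B$ algebras of strictly larger derivation dimension are $B_3$, $B_4(\al)$ and $B_5(\be)$; for $D_2(-1)$, where $\dim\Der=3$, the family $B_4$ (also of derivation dimension $3$) is excluded by Lemma \ref{inv3}, leaving only $B_3$ and $B_5(\be)$. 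So the task is to decide which parameters in these families actually occur.

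Second, I would compute the right multiplications of $D_1$ and $D_2(\al)$ and verify that in both cases
\[
T(x)=\Bigl(R(x)-\tfrac{1}{3}\tr(R(x))\,\id\Bigr)^2=0
\]
is an operator identity: in each case $R(x)-\tfrac13\tr(R(x))\id$ has nonzero entries only in its first column, hence squares to zero. By Lemma \ref{inv2} this identity passes to every algebra in the orbit closure. As already noted in Proposition \ref{4.2}, $B_4(\al)$ satisfies $T(x)=0$ if and only if $\al=0$, which rules out every $B_4(\al)$ with $\al\neq0$. The same identity unfortunately holds for $B_3$ and for all $B_5(\be)$, so it does not separate those.

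The hard part is excluding $B_3$ and the algebras $B_5(\be)$ with $\be\neq0,1$. Here I would argue exactly as in Proposition \ref{4.2}: by Lemma \ref{2.7} a degeneration is a sequential contraction, and since $\Lg_D=\Lr_3(\C)$ is solvable one may pass to a basis in which the multiplication operators are simultaneously triangular and take $g_t^{-1}=(f_{ij}(t))$ triangular. Computing the limiting products, I expect the familiar dichotomy keyed on whether the relevant diagonal entry tends to $0$: if it does not, then $\dim(B\cdot B)\ge2$ for every $B$ in the closure, whereas if it does, two basis vectors fall into the left annihilator and $\dim\CL(B)\ge2$. Since $\dim(B_3\cdot B_3)=\dim\CL(B_3)=1$ and $\dim(B_5(\be)\cdot B_5(\be))=\dim\CL(B_5(\be))=1$ for $\be\neq0,1$, both $B_3$ and the generic $B_5(\be)$ are ruled out; the only type-$B$ algebra with $\dim(B\cdot B)=1$ surviving the $\dim\CL\ge2$ branch is $B_5(0)\simeq B_5(1)$, while $B_4(0)$ survives the other branch because $\dim(B_4(0)\cdot B_4(0))=2$. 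I expect the bookkeeping of these limiting products, together with the choice of a triangularizing basis, to be the main obstacle.

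Finally I would record the explicit essential degenerations in section $6$: matrices realizing $D_1\ra_{\rm deg}B_4(0)$ and $D_2(\al)\ra_{\rm deg}B_4(0)$ for $\al\neq-1$; composing with $B_4(0)\ra_{\rm deg}B_5(0)$ from Proposition \ref{4.2} then gives $B_5(0)$ by transitivity. For $D_2(-1)$, where $B_4(0)$ is not admissible since $\dim\Der(D_2(-1))=\dim\Der(B_4(0))=3$, I would instead give a matrix realizing $D_2(-1)\ra_{\rm deg}B_5(0)$ directly. Combined with the impossibility results, this determines the orbit closures exactly as stated.
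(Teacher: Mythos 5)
Your proposal is correct and follows essentially the same route as the paper: Table $1$ to reduce the candidates to $B_3$, $B_4(\al)$, $B_5(\be)$, the operator identity $(R(x)-\tfrac{1}{3}\tr(R(x))\id)^2=0$ to eliminate $B_4(\al)$ for $\al\neq 0$, the triangular-matrix orbit computation with the $\dim(B\cdot B)\ge 2$ versus $\dim\CL(B)\ge 2$ dichotomy to eliminate $B_3$ and $B_5(\be)$ for $\be(1-\be)\neq 0$, and explicit matrices (together with transitivity through $B_4(0)$) for the surviving degenerations. The paper's proof differs only in presentation details, such as explicitly permuting $e_2$ and $e_3$ in $D_1$ before triangularizing.
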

\vspace*{0.5cm}

\begin{proof}
By table $1$, $D_1$ can only degenerate to $B_3,B_4(\al)$ or $B_5(\be)$. Since $D_1$ 
satisfies the operator identity $(R(x)-\frac{1}{3}\tr (R(x))\id)^2=0$, but $B_4(\al)$ 
for $\al\neq 0$ does not, we can exclude $B_4(\al)$ for $\al\neq 0$. There is 
a degeneration $D_1\ra_{\rm deg} B_4(0)\ra_{\rm deg}B_5(0)$, see $6.81$. On the other hand,
$D_1$ does not degenerate to $B_3$ or an algebra $B_5(\be)$ with $\be(1-\be)\neq 0$. 
To see this, change the basis of $D_1$ by permuting $e_2$ and $e_3$, and compute the orbit 
closure of $D_1$ via a lower-triangular matrix $g_t^{-1}=(f_{ij}(t))$. Suppose that 
$B\in \ov{O(D_1)}$ satisfies $\dim (B\cdot B)=1$. It follows that $\lim_{t\to 0}f_{11}(t)= 0$ 
and $\dim \CL(B)\ge 2$. Hence $B$ cannot be isomorphic to $B_3$ or $B_5(\be)$ with 
$\be(1-\be)\neq 0$, see proposition $\ref{4.2}$. \\
The same arguments show that an algebra $D_2(\al)$ with $\al\neq -1$ can only degenerate to
$B_4(0)$ and $B_5(0)$. There is a degeneration $D_2(\al)\ra_{\rm deg} B_4(0)$ for $\al\neq -1$, 
see $6.82$. By table $1$, the algebra $D_2(-1)$ can only degenerate to $B_3$ or $B_5(\be)$.
Again the above argument shows that only $B_5(0)\simeq B_5(1)$ remains. There is a degeneration
$D_2(-1)\ra_{\rm deg}B_5(0)$, see $6.83$.
\end{proof}

\newpage

\begin{prop}\label{4.12}
The orbit closures for type $D\ra_{\rm deg}E_{\la=1}$ are given as follows:
\vspace*{0.5cm}
\begin{center}
\begin{tabular}{c|c}
$A$ & $\partial (O(A))$ \\
\hline
$D_1$ & $E_{1,1}(-1),\,E_{2,1}$    \\
$D_2(\al)_{\al\neq -1}$ & $E_{1,1}(\al)_{\al\neq -1}$  \\
$D_2(-1)$ & $E_{1,1}(-1),\,E_{2,1}$ 
\end{tabular}
\end{center}
\end{prop}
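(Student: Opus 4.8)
The plan is to follow the two-step scheme (impossibility via invariants, then explicit construction) of the previous propositions, using that the only Novikov structures of type $E_{\la=1}$, that is, with associated Lie algebra $\Lr_{3,1}(\C)$, are $E_{1,1}(\ov{\al})$ and $E_{2,1}$. By Lemma \ref{inv5} any degeneration $D\ra_{\rm deg}E_{\la=1}$ induces the Lie algebra degeneration $\Lr_3(\C)\ra_{\rm deg}\Lr_{3,1}(\C)$, which is present in the Hasse diagram, so nothing is excluded at the level of associated Lie algebras; and by table $1$ with Lemma \ref{inv3} the sources $D_1$, $D_2(\al)_{\al\neq -1}$ (derivation dimension $2$) and $D_2(-1)$ (derivation dimension $3$) can a priori only reach $E_{1,1}(\ov{\al})_{\ov{\al}\neq -1}$, $E_{2,1}$ (dimension $4$) and $E_{1,1}(-1)$ (dimension $6$). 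The generalized trace invariant $c_{i,j}$ will do the separating.

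First I would use the coincidence $c_{i,j}(E_{1,1}(\al))=c_{i,j}(D_2(\al))$ recorded in Proposition \ref{4.5}, together with the formula from Proposition \ref{4.4},
\[
c_{i,j}(D_2(\al))=\frac{(\al^i+2(\al+1)^i)(\al^j+2(\al+1)^j)}{\al^{i+j}+2(\al+1)^{i+j}}.
\]
This equals $1$ for all $i,j$ exactly when $\al=-1$, and since for $\la=1$ one has $E_{1,1}(\al)\simeq E_{1,1}(\ov{\al})$ only for $\ov{\al}=\al$, the values $c_{i,j}$ determine the parameter. A short computation then yields $c_{i,j}(D_1)=1$ for all $i,j$: writing $x=a e_1+be_2+ce_3$, the operator $L(x)$ for $D_1$ has eigenvalues $0,0,-a$, and one checks directly that $\tr(L(x)^iL(y)^j)=\tr(L(x)^i)\,\tr(L(y)^j)$. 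Of course $c_{i,j}(D_2(-1))=1$ as well.

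With these invariants the impossibility part is a direct application of Lemma \ref{inv1}. For $D_2(\al)$ with $\al\neq -1$ a degeneration to $E_{1,1}(\ov{\al})$ forces $c_{i,j}(D_2(\al))=c_{i,j}(D_2(\ov{\al}))$, hence $\ov{\al}=\al$ (comparing $c_{1,1}$ produces a quadratic in $\ov{\al}$ whose spurious root is removed by $c_{1,2}$, exactly as in the proof of Proposition \ref{4.4}); furthermore $E_{2,1}$ and $E_{1,1}(-1)$, both with all $c_{i,j}=1$, are ruled out since $c_{1,1}(D_2(\al))\neq 1$. Hence, within type $E_{\la=1}$, $\partial O(D_2(\al)_{\al\neq -1})$ consists only of $E_{1,1}(\al)$. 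For the sources $D_1$ and $D_2(-1)$, which have $c_{i,j}\equiv 1$, the same lemma discards every $E_{1,1}(\ov{\al})$ with $\ov{\al}\neq -1$, leaving only the candidates $E_{2,1}$ and $E_{1,1}(-1)$.

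It remains to realize the surviving degenerations. For $D_2(\al)\ra_{\rm deg}E_{1,1}(\al)$ I would exhibit an explicit contraction matrix $g_t$ (listed in section $6$). For the other two sources I would construct $D_2(-1)\ra_{\rm deg}E_{2,1}$ explicitly and then use transitivity: Proposition \ref{4.5} gives $E_{2,1}\ra_{\rm deg}E_{1,1}(-1)$, so $D_2(-1)\ra_{\rm deg}E_{1,1}(-1)$; and since $D_1\ra_{\rm deg}D_2(-1)$ by Proposition \ref{4.4}, both $D_1\ra_{\rm deg}E_{2,1}$ and $D_1\ra_{\rm deg}E_{1,1}(-1)$ follow. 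The hard part is exactly that $c_{i,j}$ is identically $1$ on all four algebras $D_1$, $D_2(-1)$, $E_{2,1}$, $E_{1,1}(-1)$, so these invariants cannot decide whether both $E_{2,1}$ and $E_{1,1}(-1)$ actually lie in the closures of $D_1$ and $D_2(-1)$; this must instead be settled by the explicit contraction $D_2(-1)\ra_{\rm deg}E_{2,1}$ and the transitivity chain above.
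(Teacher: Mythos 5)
Your argument is correct and reaches the same candidate lists and the same constructions (6.84, 6.85, plus transitivity through $D_1\ra_{\rm deg}D_2(-1)$ and $E_{2,1}\ra_{\rm deg}E_{1,1}(-1)$), but your impossibility step runs through a different invariant than the paper's. You use the generalized trace invariants $c_{i,j}$ and Lemma \ref{inv1}: the computation $c_{i,j}(D_1)=1$ (correct; $L(x)$ for $D_1$ has eigenvalues $-a,0,0$ and the mixed traces factor), together with $c_{i,j}(E_{1,1}(\al))=c_{i,j}(D_2(\al))$ and the fact that these determine the parameter, pins everything down. The paper instead uses operator identities and Lemma \ref{inv2}: $D_1$ satisfies $L(x)^3-L(x)^2R(x)=0$, which holds in $E_{1,1}(\al)$ only for $\al=-1$, and $D_2(\al)$ satisfies a parameter-dependent cubic identity $T_{\al}(x)=0$ that $E_{2,1}$ satisfies only for $\al=-1$. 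The trade-off: your route reuses the $c_{i,j}$ computation already done for Proposition \ref{4.4} and makes fully explicit the exclusion $D_2(\al)\not\ra_{\rm deg}E_{1,1}(\ov{\al})$ for $\ov{\al}\neq \al$, which the paper's proof of this proposition leaves implicit; the cost is the bookkeeping over vanishing denominators (your parenthetical ``the spurious root is removed by $c_{1,2}$'' deserves the same care as in Proposition \ref{4.4}, since $c_{1,1}$ alone only forces $\ov{\al}\in\{\al,-\tfrac{4}{3}-\al\}$ and one must check a second invariant separates the spurious root, possibly passing to a further $c_{i,j}$ for exceptional $\al$), whereas the operator identities give a clean yes/no condition on the parameter in one step. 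Both are legitimate instances of the paper's general scheme.
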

\vspace*{0.5cm}

\begin{proof}
The algebra $D_1$ satisfies the operator identity $T(x)=L(x)^3-L(x)^2R(x)=0$. This identity
holds for $E_{1,1}(\al)$ if and only if $\al=-1$. Hence $D_1$ cannot degenerate to $E_{1,1}(\al)$
for $\al\neq -1$. There are degenerations $D_1\ra_{\rm deg} D_2(-1)\ra_{\rm deg} E_{2,1}\ra_{\rm deg}
E_{1,1}(-1)$, see $6.85$. \\
A degeneration $D_2(\al)\ra_{\rm deg}E_{2,1}$ can only exist for $\al=-1$, since $D_2(\al)$ satisfies
the operator identity $T_{\al}(x)=0$ with
\begin{align*}
T_{\al}(x) & = \al^2L(x)^3-\al^2L(x)^2R(x)-2\al(\al+1)R(x)L(x)^2\\
           & \quad +2\al(\al+1)R(x)L(x)R(x) + (\al+1)^2R(x)^2L(x)-(\al+1)^2R(x)^3,
\end{align*}
whereas $E_{2,1}$ satisfies it if and only if $\al=-1$. There is a degeneration
$D_2(\al)\ra_{\rm deg}E_{1,1}(\al)$ for every $\al$, see $6.84$.
\end{proof}

\begin{prop}
The orbit closures for type $D\ra_{\rm deg}A$ are given as follows:
\vspace*{0.5cm}
\begin{center}
\begin{tabular}{c|c}
$A$ & $\partial (O(A))$ \\
\hline
$D_1$ & $A_1,\,A_5$    \\
$D_2(\al)$ & $A_1,\,A_5$
\end{tabular}
\end{center}
\end{prop}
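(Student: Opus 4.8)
The plan is to follow the two-step template used in the preceding propositions: first cut down the list of possible targets by derivation dimension, then eliminate the survivors one at a time with invariants, and finally produce the two genuine degenerations by transitivity. Both $D_1$ and $D_2(\al)$ have associated Lie algebra $\Lr_3(\C)$, so a degeneration to type $A$ refines $\Lr_3(\C)\ra_{\rm deg}\C^3$. By table~$1$ we have $\dim\Der(D_1)=\dim\Der(D_2(\al)_{\al\neq-1})=2$ and $\dim\Der(D_2(-1))=3$, so by Lemma~\ref{inv3} the only possible targets of type $A$ are those with strictly larger derivation dimension, namely $A_{10},A_2,A_9,A_{11},A_5,A_1$ (with $A_{10}$ already ruled out for $D_2(-1)$). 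It then suffices to exclude $A_{10},A_2,A_9,A_{11}$ and to exhibit $A_5,A_1$ in the closure.

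The targets $A_{11}$ and $A_2$ I would dispatch with the trace invariants of Lemma~\ref{inv1}. A short computation of the left multiplications gives $c_{1,1}(D_1)=1$ and $c_{1,1}(D_2(\al))=(3\al+2)^2/(3\al^2+4\al+2)$, which never equals $3=c_{1,1}(A_{11})$ since $(3\al+2)^2-3(3\al^2+4\al+2)=-2$; hence neither $D_1$ nor any $D_2(\al)$ degenerates to $A_{11}$. For $A_2$, note $d_{1,1}(A_2)=1$, while the right multiplications of $D_1$ and of $D_2(\al)$ (for $\al\neq0$) are triangular with a single eigenvalue of multiplicity three, giving $d_{1,1}(D_1)=d_{1,1}(D_2(\al))=3$; the remaining value $\al=0$ is settled by $c_{1,1}(D_2(0))=2\neq1=c_{1,1}(A_2)$. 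So $A_2$ is excluded in every case.

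For $A_9$ I would reuse the orbit-closure computation already carried out for type $D\ra_{\rm deg}B$ in Proposition~\ref{4.11}: computing the orbit of $D_1$ (resp. $D_2(\al)$) through a lower-triangular $g_t^{-1}=(f_{ij}(t))$ after permuting $e_2,e_3$ shows that every $B\in\ov{O(D)}$ with $\dim(B\cdot B)=1$ must have $\lim_{t\to0}f_{11}(t)=0$, and therefore $\dim\CL(B)\ge2$. Since $\dim(A_9\cdot A_9)=\dim\CL(A_9)=1$, this rules out $A_9$; and then $A_{10}$ is excluded by transitivity, because $A_{10}\ra_{\rm deg}A_9$ by Proposition~\ref{4.1}. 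This orbit-closure step is the only delicate point, but it is exactly the computation already performed in Proposition~\ref{4.11}, so it costs nothing new here.

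Finally, the two surviving degenerations come for free by transitivity: Proposition~\ref{4.11} gives $D\ra_{\rm deg}B_5(0)$ for $D\in\{D_1,D_2(\al)\}$, Proposition~\ref{4.7} gives $B_5(0)\ra_{\rm deg}A_5$ (as $0\neq\frac{1}{2}$), and Proposition~\ref{4.1} gives $A_5\ra_{\rm deg}A_1$. Composing, $D\ra_{\rm deg}A_5\ra_{\rm deg}A_1$, which together with the four exclusions yields $\partial(O(D))\cap\{\text{type }A\}=\{A_1,A_5\}$ in every case.
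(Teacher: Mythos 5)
Your proof is correct and follows essentially the same route as the paper: restrict the candidates via Table 1, kill $A_2$ and $A_{11}$ with the $c_{1,1}$/$d_{1,1}$ invariants, kill $A_9$ (hence $A_{10}$) by reusing the lower-triangular orbit computation of Proposition \ref{4.11}, and obtain $A_5, A_1$ by transitivity through $B_5(0)$. The only differences are cosmetic: you spell out the invariant values for $D_2(\al)$ (including the $\al=0$ case) where the paper just says ``the same arguments apply,'' and you conclude the $A_9$ exclusion from $\dim\CL(B)\ge 2$ rather than from the explicit form $e_1\cdot e_1=\al e_3$ of the commutative limits, both of which follow from the same computation.
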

\vspace*{0.5cm}

\begin{proof}
By table $1$, $D_1$ can only degenerate to $A_{10}$, $A_2$, $A_9$, $A_{11}$, $A_5$ or $A_1$.
Since $c_{1,1}(D_1)=1$, $d_{1,1}(D_1)=3$, but $c_{1,1}(A_{11})=3$, $d_{1,1}(A_2)=1$, we can
exclude $A_2$ and $A_{11}$. Furthermore $D_1$ cannot degenerate to $A_9$, and hence not
to $A_{10}$ by transitivity. To see this, we use the argument of proposition $\ref{4.11}$.
A commutative algebra $A\in \ov{O(D_1)}$ with $\dim (A\cdot A)=1$ can be represented by
products of the form $e_1\cdot e_1=\al e_3$. It cannot be isomorphic to $A_9$. There is
a degeneration $D_1\ra_{\rm deg}B_5(0)\ra_{\rm deg} A_5$. \\
For an algebra $D_2(\al)$, exactly the same arguments yield the claim. There is a degeneration
$D_2(\al)\ra_{\rm deg} B_5(0)\ra_{\rm deg} A_5$.
\end{proof}

\newpage

\section{Hasse diagrams}

1.) Type $A\ra_{\rm deg}A$:
$$
\begin{xy}
\xymatrix{
 & A_4 \ar[ld] \ar[d] & \\
A_3\ar[d]\ar[rd]  &  A_8\ar[ld]\ar[d]\ar[rd]  &  \\
A_6\ar[dd] \ar[rd] & A_7\ar[d] &  A_{12}\ar[ld]\ar[dd] \\
 & A_{10}\ar[d] & \\
A_2\ar[rd] & A_9 \ar[d] & A_{11}\ar[ld]\\
 & A_5\ar[d] & \\
 & A_1 & 
}
\end{xy}
$$
\vspace*{0.5cm}
2.) Type $B\ra_{\rm deg}B$: 
$$
\begin{xy}
\xymatrix{
 & B_2 \ar[ld] \ar[dd] \ar[rdd] & \\
B_1\ar[d]  &   &  \\
B_4(0)\ar[d] & B_4(\frac{1}{2})\ar[d] &  B_4(\al)_{\al\neq 0,\frac{1}{2}}\ar[d]|-{\be=\al} \\
B_5(0) & B_3\ar[d] &  B_5(\be)_{\be\neq 0,\frac{1}{2}}\\
 & B_5(\frac{1}{2}) & 
}
\end{xy}
$$
\vspace*{0.5cm}
3.) Type $C\ra_{\rm deg}C$: 
$$
\begin{xy}
\xymatrix{
 & C_1 \ar[ld] \ar[d] \ar[rd] & & C_6(0)\ar[d] & C_6(\be)_{\be\neq 0,-1} \ar[d]|-{\al = \be}
 \ar[rd]|-{\ga = \be}\\
C_2\ar[d]& C_6(-1)\ar[ld]\ar[rd] & C_3 \ar[d] & C_4 \ar[d] & C_5(\al)_{\al\neq 0,-1}
& C_7(\ga)_{\ga\neq 0,-1}\\
C_7(-1) &  &  C_5(-1) & C_5(0) & & 
}
\end{xy}
$$
\vspace*{0.5cm}
4.) Type $D\ra_{\rm deg}D$: 
$$
\begin{xy}
\xymatrix{
D_1 \ar[d] & D_2(\al)_{\al\neq -1}\\
D_2(-1) &  
}
\end{xy}
$$
\vspace*{0.5cm}
5.) Type $E_\la\ra_{\rm deg}E_\la$ with $\la\neq 0$: 
$$
\begin{xy}
\xymatrix{
E_5(-\frac{1}{2})\ar[d] & E_3 \ar[ld] \ar[d]|-{\la = 2} & E_4 \ar[d] \ar[ld]|-{\la=\frac{1}{2}} & 
E_5(\be)_{\be\neq -\frac{1}{2},-1} \ar[d]|-{\la = \frac{1}{2},\al=\be} \\
E_6\ar[rd]|-{\ov{\la}=2} & E_{2,\la}\ar[d]|-{\ov{\la}=\la} & E_5(-1) \ar[ld]|-{\ov{\la}=\frac{1}{2}} 
 & E_{1,\la}(\al)_{\al\neq -1,-\la} \\
 &  E_{1,\ov{\la}}(-1) &  & \\  
 & E_{1,1}(\al)_{\al\neq -1} & E_{2,1}\ar[ld] & \\
 & E_{1,1}(-1)
}
\end{xy}
$$
\vspace*{0.5cm}
6.) Type $C\ra_{\rm deg}B$: 
$$
\begin{xy}
\xymatrix{
& C_6(\be)_{\be\neq -1} \ar @{.>}[ld]|-{\be=0} \ar @{.>}[d]|-{\be=\al} \ar @{.>}[rd]|-{\ga = \be} 
\ar[rdd] & &  
C_1\ar @{.>}[d] \ar[ldd] \ar @{.>}[rd]\ar @{.>}[rrd] & & \\
C_4 \ar @{.>}[d]  \ar[rrd]|-{\ov{\al}=0} & C_5(\al)_{\al\neq 0,-1} \ar[rd]|-{\ov{\al}=-\al} 
& C_7(\ga)_{\ga\neq 0,-1} \ar[d]|-{\ov{\al}=0} & C_3 \ar[ld]|-{\ov{\al}=1} \ar @{.>}[rd] 
& C_2  \ar[lld]|-{\ov{\al}=0}\ar @{.>}[rd] 
& C_6(-1) \ar @{.>}[ld] \ar @{.>}[d]\\
C_5(0) \ar[rrd]|-{\ov{\be}=0} & & B_4(\ov{\al})\ar @{.>}[ld]|-{\ov{\al}=\frac{1}{2}}
\ar @{.>} @/_2.0pc/[dd] \ar @{.>}[d]|-{\ov{\be}=\ov{\al}} & & 
C_5(-1)\ar[lld]|-{\ov{\be}=0} & C_7(-1)\ar[llld]|-{\ov{\be}=0} \\
       & B_3 \ar @{.>}[rd] & B_5(\ov{\be})_{\ov{\be}\neq \frac{1}{2}} & & & \\
       &                   & B_5(\frac{1}{2}) & & &
}
\end{xy}
$$
\vspace*{0.5cm}
7.) Type $B\ra_{\rm deg}A$: 
$$
\begin{xy}
\xymatrix{
& & B_2 \ar[ld] \ar @{.>}[d] \ar @{.>}[rdd] \ar @{.>}[rrdd] & &  \\
& A_{12}\ar @{.>}[ld] \ar @{.>}[dd] & B_1 \ar[ldd] \ar @{.>}[d] & & \\
A_{10} \ar @{.>}[d] & & B_4(0) \ar @{.>}[d] & B_4(\frac{1}{2}) \ar @{.>}[d] 
& B_4(\al)_{\al\neq 0,\frac{1}{2}} \ar @{.>}[d]|-{\be=\al} \\
A_9 \ar @{.>}[rrd] & A_{11} \ar @{.>}[rd] & B_5(0) \ar[d] & B_3\ar[ld] \ar @{.>}[dd] 
& B_5(\be)_{\be\neq 0,\frac{1}{2}} \ar[lld] \\
& & A_5\ar @{.>}[dd] & & \\
& & & B_5(\frac{1}{2})\ar[ld] & \\
& & A_1 & &  
}
\end{xy}
$$
\vspace*{0.5cm}
8.) Type $C\ra_{\rm deg}A$: 
$$
\begin{xy}
\xymatrix{
& C_1 \ar @{.>}[ld] \ar @{.>}[d] \ar @{.>}[rd] \ar[rrd] & & C_6(0) \ar[d]\ar @{.>}[rd] & & 
C_6(\be)_{\be\neq 0,-1} \ar[lld] \ar @{.>}[d]|-{\al=\be} \ar @{.>}[rd]|-{\ga=\be} &  \\
C_2 \ar @{.>}[d] & C_6(-1) \ar @{.>}[ld] \ar[rdd]\ar @{.>}[rd] & C_3 \ar @{.>}[d] &
A_6 \ar @{.>}[ldd] \ar @{.>}[d] & C_4 \ar @{.>}[d] &  C_5(\al)_{\al\neq 0,-1} \ar @{.>}[llddd] &
C_7(\ga)_{\ga\neq 0,-1} \ar @{.>}[lllddd] \\
C_7(-1)\ar @{.>}[rrrdd] & & C_5(-1)\ar @{.>}[rdd] & A_{10} \ar @{.>}[d] & C_5(0)\ar @{.>}[ldd] & & \\
& & A_2 \ar @{.>}[rd] & A_9 \ar @{.>}[d] & & & \\ 
& & & A_5 \ar @{.>}[d] & & & \\
& & & A_1 & & &  
}
\end{xy}
$$
\vspace*{0.5cm}
9.) Type $E_{\la\neq 1}\ra_{\rm deg}B$: 
$$
\begin{xy}
\xymatrix{
E_5(-\frac{1}{2})\ar @{.>}[d] & E_3 \ar @{.>}[ld] \ar @{.>}[d]\ar[rdd] 
& E_4 \ar @{.>}[d] \ar @{.>}[ld] \ar @/^2pc/[dd] & 
E_5(\be)_{\be\neq -\frac{1}{2},-1}\ar[ldd] \ar @{.>}[d] \\
E_6\ar[dd] \ar[rdd]\ar @{.>}[rd] & E_{2,\la}\ar @{.>}[d]\ar[rd]|-{\ov{\al}=0} & 
E_5(-1) \ar[d]|-{\ov{\al}=1}\ar @{.>} @/^0.8pc/[ld] & E_{1,\la}(\al)_{\al\neq -1,-\la}
\ar[ld]|-{\ov{\al}=0} \\
 &  E_{1,\ov{\la}}(-1)\ar[d]|-{\be = 0} & B_4(\ov{\al}) \ar @{.>}[ld] \ar @{.>} @/_1.0pc/[lld] & \\
B_3 \ar @{.>}[d] & B_5(\ov{\be})_{\ov{\be}\neq \frac{1}{2}} & & \\
B_5(\frac{1}{2}) & & &  
}
\end{xy}
$$
\vspace*{0.5cm}
10.) Type $E_{\la}\ra_{\rm deg}A$: 
$$
\begin{xy}
\xymatrix{
E_5(-\frac{1}{2})\ar @{.>}[d] & E_3 \ar @{.>}[ld] \ar @{.>}[d]\ar[rdd] 
& E_4 \ar @{.>}[d] \ar @{.>}[ld] \ar @/^2pc/[dd] & 
E_5(\be)_{\be\neq -\frac{1}{2},-1}\ar[ldd] \ar @{.>}[d] \\
E_6 \ar @{.>} @/_1.5pc/[rdd]\ar @{.>}[rd] \ar @/_2pc/[rrdd] & E_{2,\la}\ar @{.>}[d] & 
E_5(-1) \ar @{.>}[ld] & E_{1,\la}(\al)_{\al\neq -1,-\la} \ar @{.>}[lddd]\\
 &  E_{1,\ov{\la}}(-1) \ar @{.>}[rdd] & A_{10} \ar @{.>}[d] & \\
& E_{2,1}\ar @{.>}[dd] \ar[rd] & A_9 \ar @{.>}[d]& E_{1,1}(\al)_{\al\neq -1} \ar[ld] \\
& & A_5 \ar @{.>}[dd] & \\
& E_{1,1}(-1)\ar[rd] & &  \\
& & A_1 & 
}
\end{xy}
$$
\vspace*{0.5cm}
11.) Type $D\ra_{\rm deg}B$: 
$$
\begin{xy}
\xymatrix{
D_1\ar @{.>}[d] \ar[rd] & D_2(\al)_{\al\neq -1} \ar[d] \\
D_2(-1) \ar[rd] & B_4(0)\ar @{.>}[d] \\
 & B_5(0) 
}
\end{xy}
$$
\vspace*{0.5cm}
12.) Type $D\ra_{\rm deg}E_{\la=1}$:  
$$
\begin{xy}
\xymatrix{
D_1\ar @{.>}[d]  & D_2(\al)_{\al\neq -1} \ar[ddd] \\
D_2(-1) \ar[d] &  \\
E_{2,1} \ar @{.>}[dd] & \\
 & E_{1,1}(\al)_{\al\neq -1} \\
E_{1,1}(-1) &  
}
\end{xy}
$$
\vspace*{0.5cm}
13.) Type $D\ra_{\rm deg}A$:
$$
\begin{xy}
\xymatrix{
D_1\ar @{.>}[d]  & D_2(\al)_{\al\neq -1} \ar @{.>}[ldd] \\
D_2(-1) \ar @{.>}[d] &  \\
A_5 \ar @{.>}[d] & \\
A_1 &  
}
\end{xy}
$$

\section{List of essential degenerations}

\vspace*{0.5cm}
\begin{center}
\begin{tabular}{c|c|c|c|c|c|c}
               & $1$ & $2$ & $3$ & $4$ & $5$ & $6$ \\
\hline
& & & & & & \\
$\ra_{\rm deg}$ & $A_4\ra A_3$ & $A_4\ra A_8$ & $A_3\ra A_6$ & $A_3\ra A_7$ & $A_8\ra A_6$ & $A_8\ra A_7$ \\
& & & & & & \\
$g_t^{-1}$ & $\left(\begin{smallmatrix}
t & 0 & 0 \\
0 & 1 & 0 \\
0 & 0 & 1 
\end{smallmatrix}\right)$ &
$\left(\begin{smallmatrix}
t & 1 & 0 \\
0 & 1 & 0 \\
0 & 0 & 1 
\end{smallmatrix}\right)$ &
$\left(\begin{smallmatrix}
1 & -t^{-1} & 0 \\
0 & t & t^3 \\
0 & 0 & 1 
\end{smallmatrix}\right)$ &
$\left(\begin{smallmatrix}
0 & 0 & 1 \\
t & 1 & t \\
0 & 1 & t 
\end{smallmatrix}\right)$ &
$\left(\begin{smallmatrix}
1 & t^{-1} & 1 \\
0 & t & t^2 \\
0 & 0 & 1 
\end{smallmatrix}\right)$ &
$\left(\begin{smallmatrix}
1 & 0 & 0 \\
0 & 1 & 0 \\
0 & 0 & t 
\end{smallmatrix}\right)$ \\
\end{tabular}
\end{center}
\vspace*{0.5cm}
\begin{center}
\begin{tabular}{c|c|c|c|c|c|c}
               & $7$ & $8$ & $9$ & $10$ & $11$ & $12$ \\
\hline
& & & & & & \\
$\ra_{\rm deg}$ & $A_8\ra A_{12}$ & $A_6\ra A_2$ & $A_6\ra A_{10}$ & $A_7\ra A_{10}$ & $A_{12}\ra A_{10}$ 
& $A_{12}\ra A_{11}$ \\
& & & & & & \\
$g_t^{-1}$ & 
$\left(\begin{smallmatrix}
1 & t^{-1} & 0 \\
0 & t & 1 \\
0 & 0 & 1 
\end{smallmatrix}\right)$ &
$\left(\begin{smallmatrix}
1 & 0 & 0 \\
0 & t & 0 \\
0 & 0 & 1 
\end{smallmatrix}\right)$ &
$\left(\begin{smallmatrix}
-t & 1 & 0 \\
0 & t & -1 \\
0 & 0 & t 
\end{smallmatrix}\right)$ &
$\left(\begin{smallmatrix}
t^2 & 2t & 1 \\
0 & t^2 & t \\
0 & 0 & 1 
\end{smallmatrix}\right)$ &
 $\left(\begin{smallmatrix}
t & 1 & 0 \\
0 & t & 1 \\
0 & 0 & t 
\end{smallmatrix}\right)$ &
$\left(\begin{smallmatrix}
1 & 0 & 0 \\
0 & t & 0 \\
0 & 0 & 1 
\end{smallmatrix}\right)$
\end{tabular}
\end{center}
\vspace*{0.5cm}
\begin{center}
\begin{tabular}{c|c|c|c|c|c|c}
               & $13$ & $14$ & $15$ & $16$ & $17$ & $18$ \\
\hline
& & & & & & \\
$\ra_{\rm deg}$ & $A_{10}\ra A_9$ & $A_2\ra A_5$ & $A_9\ra A_5$ & $A_{11}\ra A_5$ & $A_5\ra A_1$ 
& $B_2\ra B_1$ \\
& & & & & & \\
$g_t^{-1}$ & 
$\left(\begin{smallmatrix}
t & 0 & 0 \\
0 & 1 & 0 \\
0 & 0 & t 
\end{smallmatrix}\right)$ &
$\left(\begin{smallmatrix}
0 & 0 & 1 \\
t & -1 & 0 \\
0 & t & 0 
\end{smallmatrix}\right)$ &
$\left(\begin{smallmatrix}
2 & 0 & 0 \\
0 & 1 & t \\
0 & 1 & 0 
\end{smallmatrix}\right)$ &
$\left(\begin{smallmatrix}
0 & 0 & 1 \\
t & 1 & 0 \\
0 & t & 0 
\end{smallmatrix}\right)$ &
$\left(\begin{smallmatrix}
t & 0 & 0 \\
0 & t & 0 \\
0 & 0 & t 
\end{smallmatrix}\right)$ &
$\left(\begin{smallmatrix}
1 & 0 & 0 \\
0 & t & 0 \\
0 & 0 & t 
\end{smallmatrix}\right)$
\end{tabular}
\end{center}
\vspace*{0.5cm}
\begin{center}
\begin{tabular}{c|c|c|c|c|c}
               & $19$ & $20$ & $21$ & $22$ & $23$ \\
\hline
& & & & & \\
$\ra_{\rm deg}$ & $B_2\ra B_4(\al)$ & $B_1\ra B_4(0)$ & $B_4(\al)\ra B_5(\al)$ & 
$B_4(\frac{1}{2})\ra B_3$ & $B_3\ra B_5(\frac{1}{2})$ \\
& 
$\begin{smallmatrix}
\al\neq 0 \\
\end{smallmatrix}$
& & & & \\
$g_t^{-1}$ & 
$\left(\begin{smallmatrix}
0 & t & 0 \\
-\al t^2 & -\al t & 0 \\
0 & \al(1-\al)t & \al t^3 
\end{smallmatrix}\right)$ &
$\left(\begin{smallmatrix}
0 & t & 0 \\
1 & t^{-1} & 0 \\
0 & -t^{-1} & -t 
\end{smallmatrix}\right)$ &
$\left(\begin{smallmatrix}
1 & 0 & 0 \\
0 & t & 0 \\
0 & 0 & t 
\end{smallmatrix}\right)$ &
$\left(\begin{smallmatrix}
1 & 0 & 0 \\
0 & t & 0 \\
-t^{-1} & 0 & t 
\end{smallmatrix}\right)$ &
$\left(\begin{smallmatrix}
1 & 0 & 0 \\
0 & t & 0 \\
0 & 0 & t 
\end{smallmatrix}\right)$
\end{tabular}
\end{center}
\vspace*{0.5cm}
\begin{center}
\begin{tabular}{c|c|c|c|c|c}
               & $24$ & $25$ & $26$ & $27$ & $28$ \\
\hline
& & & & & \\
$\ra_{\rm deg}$ & $C_1\ra C_2$ & $C_1\ra C_6(-1)$ & $C_1\ra C_3$ & 
$C_6(0)\ra C_4$ & $C_6(\be)\ra C_5(\be)$ \\
& & & & & \\
$g_t^{-1}$ & 
$\left(\begin{smallmatrix}
1 & 0 & 0 \\
0 & 1 & 0 \\
-1 & -t^2 & t 
\end{smallmatrix}\right)$ &
$\left(\begin{smallmatrix}
1 & 0 & 0 \\
0 & t^{-1} & 0 \\
0 & 0 & 1 
\end{smallmatrix}\right)$ &
 $\left(\begin{smallmatrix}
1 & 0 & 0 \\
0 & 1 & 0 \\
0 & -t^2 & t 
\end{smallmatrix}\right)$ &
$\left(\begin{smallmatrix}
1 & 0 & 0 \\
0 & 1 & 0 \\
-t & 0 & t^2  
\end{smallmatrix}\right)$ &
$\left(\begin{smallmatrix}
1 & 0 & 0 \\
0 & 1 & 0 \\
0 & 0 & t 
\end{smallmatrix}\right)$
\end{tabular}
\end{center}
\vspace*{0.5cm}
\begin{center}
\begin{tabular}{c|c|c|c|c|c}
               & $29$ & $30$ & $31$ & $32$ & $33$ \\
\hline
& & & & & \\
$\ra_{\rm deg}$ & $C_6(\be)\ra C_7(\be)$ & $C_2\ra C_7(-1)$ & $C_3\ra C_5(-1)$ & 
$C_4\ra C_5(0)$ & $D_1\ra D_2(-1)$ \\
& & & & & \\
$g_t^{-1}$ & 
$\left(\begin{smallmatrix}
1 & 0 & 0 \\
0 & 1 & 0 \\
\be & 0 & t 
\end{smallmatrix}\right)$ &
$\left(\begin{smallmatrix}
1 & 0 & 0 \\
0 & t^{-1} & 0 \\
0 & 0 & 1 
\end{smallmatrix}\right)$ &
$\left(\begin{smallmatrix}
1 & 0 & 0 \\
0 & t^{-1} & 0 \\
0 & 0 & 1 
\end{smallmatrix}\right)$ &
$\left(\begin{smallmatrix}
1 & 0 & 0 \\
0 & 1 & 0 \\
0 & 0 & t^{-1} 
\end{smallmatrix}\right)$ &
$\left(\begin{smallmatrix}
1 & 0 & 0 \\
0 & t^{-1} & 0 \\
0 & 0 & t^{-1} 
\end{smallmatrix}\right)$
\end{tabular}
\end{center}
\vspace*{0.5cm}
\begin{center}
\begin{tabular}{c|c|c|c|c|c}
               & $34$ & $35$ & $36$ & $37$ & $38$ \\
\hline
& & & & & \\
$\ra_{\rm deg}$ & $E_5(-\frac{1}{2})\ra E_6$ & $E_3\ra E_6$ & $E_3\ra E_{2,2}$ 
& $E_4\ra E_{2,\frac{1}{2}}$ & $E_4\ra E_5(-1)$ \\
& & & & & \\
$g_t^{-1}$ & 
$\left(\begin{smallmatrix}
1 & 0 & 0 \\
-2t^{-2} & -t^{-1} & 0 \\
-t^{-1} & 0 & 1 
\end{smallmatrix}\right)$ &
$\left(\begin{smallmatrix}
1 & 0 & 0 \\
0 & t^{-1} & 0 \\
0 & 0 & t^{-1} 
\end{smallmatrix}\right)$ &
$\left(\begin{smallmatrix}
2 & 0 & 0 \\
0 & 0 & t^{-1} \\
0 & 4 & 0 
\end{smallmatrix}\right)$ &
$\left(\begin{smallmatrix}
1 & 0 & 0 \\
0 & 1 & 0 \\
0 & 0 & t 
\end{smallmatrix}\right)$ &
$\left(\begin{smallmatrix}
1 & 0 & 0 \\
0 & t^{-2} & 0 \\
-1 & 0 & t^{-1} 
\end{smallmatrix}\right)$
\end{tabular}
\end{center}
\vspace*{0.5cm}
\begin{center}
\begin{tabular}{c|c|c|c|c}
               & $39$ & $40$ & $41$ & $42$ \\
\hline
& & & &  \\
$\ra_{\rm deg}$ & $E_5(\be)\ra E_{1,\frac{1}{2}}(\be)$ & $E_6\ra E_{1,2}(-1)$ 
& $E_{2,\la}\ra E_{1,\la}(-1)$ & $C_6(\be)\ra_{\rm deg}B_4(\al)$ \\
& & & & $\begin{smallmatrix}
\be\neq -1,\, \be\neq -\al, & \al\neq 0 \\
\end{smallmatrix}$ \\
$g_t^{-1}$ & 
$\left(\begin{smallmatrix}
1 & 0 & 0 \\
0 & t^{-1} & 0 \\
0 & 0 & 1 
\end{smallmatrix}\right)$ &
$\left(\begin{smallmatrix}
2 & 0 & 0 \\
0 & 0 & 1 \\
0 & t & 0 
\end{smallmatrix}\right)$ &
$\left(\begin{smallmatrix}
1 & 0 & 0 \\
0 & t^{-1} & 0 \\
0 & 0 & 1 
\end{smallmatrix}\right)$ &
$\left(\begin{smallmatrix}
0 & t & 0\\
(\be+1)t & 1 & 0 \\
\al(\al+\be)t^2 & (\al+\be)t & \al(\al+\be)t^3  
\end{smallmatrix}\right)$ 
\end{tabular}
\end{center}
\vspace*{0.5cm}
\begin{center}
\begin{tabular}{c|c|c|c}
               & $43$ & $44$ & $45$ \\
\hline
& & & \\
$\ra_{\rm deg}$ & $C_6(\be)\ra B_4(0)$ & $C_6(-\al)\ra B_4(\al)$ 
& $C_1\ra B_4(\al)$ \\
& $\begin{smallmatrix}
\be\neq -1  \\
\end{smallmatrix}$ & $\begin{smallmatrix}
\al\neq 0,1 \\
\end{smallmatrix}$ & $\begin{smallmatrix}
\al\neq 0,1 \\
\end{smallmatrix}$ \\
$g_t^{-1}$ & 
$\left(\begin{smallmatrix}
0 & t & 0\\
\frac{-1}{t(t+1)} & \frac{-1}{(\be+1)(t+1)t^2} & 1 \\
(t-\be)t^3 & (\be-t)t & 0  
\end{smallmatrix}\right)$ &
$\left(\begin{smallmatrix}
\frac{t^2}{t^2-\al} & \frac{t}{\al} & 0\\
\frac{(2\al-1)t}{t^2-\al} & 1 & \frac{(1-\al)t^2}{\al(t^2-\al)} \\
0 & \frac{t^3}{\al-t^2} & 0  
\end{smallmatrix}\right)$ &
$\left(\begin{smallmatrix}
0 & t & 0\\
t^2 & 0 & 0 \\
\al(\al-1)t^2 & (\al-1)t & \al(\al-1)t^3  
\end{smallmatrix}\right)$ 
\end{tabular}
\end{center}
\vspace*{0.5cm}
\begin{center}
\begin{tabular}{c|c|c|c|c}
               & $46$ & $47$ & $48$ & $49$ \\
\hline
& & & &  \\
$\ra_{\rm deg}$ & $C_4\ra B_4(0)$ & $C_5(\al)\ra B_4(-\al)$ 
& $C_7(\ga)\ra B_4(0)$ & $C_3\ra B_4(1)$ \\
& & $\begin{smallmatrix}
\al\neq 0,-1 \\
\end{smallmatrix}$ & $\begin{smallmatrix}
\ga\neq 0,-1 \\
\end{smallmatrix}$ & \\
$g_t^{-1}$ & 
$\left(\begin{smallmatrix}
0 & t & 0\\
-t^{-1} & -t^{-2} & 1 \\
t^2 & 0 & 0  
\end{smallmatrix}\right)$ &
$\left(\begin{smallmatrix}
0 & t & 0\\
1 & \frac{1}{(\al+1)t} & 0 \\
\frac{1}{t} & -\frac{1}{\al t^2} & 1  
\end{smallmatrix}\right)$ &
$\left(\begin{smallmatrix}
0 & t & 0\\
1 & \frac{1}{(\ga+1)t} & 0\\
\frac{1}{t} & \frac{1}{\ga t^2} & 1  
\end{smallmatrix}\right)$ &
$\left(\begin{smallmatrix}
0 & t & 0\\
t^{2} & 0 & 0 \\
t^{-1} & t^{-2} & 1  
\end{smallmatrix}\right)$
\end{tabular}
\end{center}
\vspace*{0.5cm}
\begin{center}
\begin{tabular}{c|c|c|c|c|c}
               & $50$ & $51$ & $52$ & $53$ & $54$ \\
\hline
& & & & & \\
$\ra_{\rm deg}$ & $C_2\ra B_4(0)$ & $C_5(0)\ra B_5(0)$ & $C_5(-1)\ra B_5(0)$ & 
$C_7(-1)\ra B_5(0)$ & $B_2\ra A_{12}$ \\
& & & & & \\
$g_t^{-1}$ & 
$\left(\begin{smallmatrix}
0 & t & 0\\
t^{2} & 0 & 0 \\
t^{-1} & -t^{-2} & 1  
\end{smallmatrix}\right)$ &
$\left(\begin{smallmatrix}
0 & t & 0\\
1 & 0 & 0 \\
1 & 0 & t  
\end{smallmatrix}\right)$ &
$\left(\begin{smallmatrix}
t & 0 & 0\\
0 & t^{-1} & 0 \\
0 & -t^{-1} & 1  
\end{smallmatrix}\right)$ &
$\left(\begin{smallmatrix}
0 & t & 0\\
1 & 0 & 0 \\
1 & 0 & t  
\end{smallmatrix}\right)$ &
$\left(\begin{smallmatrix}
0 & 0 & 1\\
0 & t^{-1} & 0 \\
t^{-2} & 0 & 0  
\end{smallmatrix}\right)$
\end{tabular}
\end{center}
\vspace*{0.5cm}
\begin{center}
\begin{tabular}{c|c|c|c|c|c}
               & $55$ & $56$ & $57$ & $58$ & $59$ \\
\hline
& & & & & \\
$\ra_{\rm deg}$ & $B_1\ra_{\rm deg}A_{11}$ & $B_3\ra A_5$ & $B_5(\be)\ra A_5$ & 
$B_5(\frac{1}{2})\ra A_1$ & $C_1\ra A_6$ \\
& & & $\begin{smallmatrix}
\be\neq \frac{1}{2} \\
\end{smallmatrix}$ & & \\
$g_t^{-1}$ & 
$\left(\begin{smallmatrix}
0 & 0 & 1\\
0 & 1 & 0 \\
t^{-1} & 0 & 0  
\end{smallmatrix}\right)$ &
$\left(\begin{smallmatrix}
1 & 0 & -t^{-1}\\
0 & t^{-2} & 0 \\
0 & 0 & t^{-5}  
\end{smallmatrix}\right)$ &
$\left(\begin{smallmatrix}
1 & \frac{-1}{t^{2}} & \frac{1}{(2\be-1)t}\\
0 & 1 & 0\\
0 & 0 & t^{-3}  
\end{smallmatrix}\right)$ &
$\left(\begin{smallmatrix}
t & 0 & 0\\
0 & t & 0 \\
0 & 0 & t  
\end{smallmatrix}\right)$ &
$\left(\begin{smallmatrix}
0 & t & 0\\
t^2 & 0 & 0 \\
0 & 0 & 1  
\end{smallmatrix}\right)$
\end{tabular}
\end{center}
\vspace*{0.5cm}
\begin{center}
\begin{tabular}{c|c|c|c|c|c}
               & $60$ & $61$ & $62$ & $63$ & $64$ \\
\hline
& & & & & \\
$\ra_{\rm deg}$ & $C_6(\be)\ra A_6$ & $C_6(-1)\ra A_2$ 
& $E_3\ra B_4(0)$ & $E_3\ra_{\rm deg}B_4(\al)$ & $E_4\ra B_4(0)$ \\
& $\begin{smallmatrix}
\be\neq -1 \\
\end{smallmatrix}$ & & & $\begin{smallmatrix}
\al\neq 0 \\
\end{smallmatrix}$ \\
$g_t^{-1}$ & 
$\left(\begin{smallmatrix}
0 & t & 0\\
1 & \frac{1}{(\be+1)t} & 0  \\
0 & 0 & 1  
\end{smallmatrix}\right)$ &
$\left(\begin{smallmatrix}
t & 0 & 0\\
0 & 1 & 0 \\
0 & 0 & 1  
\end{smallmatrix}\right)$ &
$\left(\begin{smallmatrix}
0 & t & 0\\
-2t^{-1} & -4t^{-2} & 1 \\
t^2 & 0 & 0  
\end{smallmatrix}\right)$ &
$\left(\begin{smallmatrix}
0 & t & 0\\
-\frac{2t^2}{\al} & -\frac{4t}{\al} & \frac{t^3}{\al}  \\
t^2 & 0 & 0  
\end{smallmatrix}\right)$ &
$\left(\begin{smallmatrix}
0 & t & 0\\
t^2(t^2+1) & -2t & -t^3/2 \\
-t^3/2 & t^2 & 0  
\end{smallmatrix}\right)$
\end{tabular}
\end{center}
\vspace*{0.5cm}
\begin{center}
\begin{tabular}{c|c|c|c|c}
               & $65$ & $66$ & $67$ & $68$ \\
\hline
& & & &  \\
$\ra_{\rm deg}$ & $E_4\ra B_4(\al)$ & $E_5(\be)\ra B_4(\al)$ 
& $E_6\ra B_3$ & $E_6\ra B_5(0)$ \\
& 
$\begin{smallmatrix}
\al\neq 0,1,  & r=\sqrt{\frac{\al}{1-\al}}\\
\end{smallmatrix}$
& 
$\begin{smallmatrix}
\be\neq -\frac{1}{2},-1,  & \al\neq 0\\
\end{smallmatrix}$
& & \\
$g_t^{-1}$ & 
$\left(\begin{smallmatrix}
0 & t & 0 \\
\frac{t^2}{1-\al} & 0 & \frac{t^3}{2(\al-1)}\\
\frac{-rt^2}{2} & rt & 0  
\end{smallmatrix}\right)$ &
$\left(\begin{smallmatrix}
0 & t & 0 \\
\frac{-4}{\al(2\be+1)t^2} & \frac{-4(2\be+\al+1)}{\al(2\be+1)^2(\be+1)t^3} & 
\frac{2}{\al(2\be+1)t}\\
1 & \frac{2}{(2\be+1)t} & 0  
\end{smallmatrix}\right)$ &
$\left(\begin{smallmatrix}
0 & t & 0\\
-4 & 4 & 2t \\
1 & 0 & 0  
\end{smallmatrix}\right)$ &
$\left(\begin{smallmatrix}
0 & t & 0\\
\frac{-2}{t} & 0 & 1 \\
1 & 0 & 0  
\end{smallmatrix}\right)$
\end{tabular}
\end{center}
\vspace*{0.5cm}
\begin{center}
\begin{tabular}{c|c|c|c|c}
               & $69$ & $70$ & $71$ & $72$ \\
\hline
& & & &  \\
$\ra_{\rm deg}$ & $E_6\ra B_5(\be)$ & $E_{2,\la}\ra B_4(0)$ 
& $E_5(-1)\ra B_4(1)$ & $E_{1,\la}(\al)\ra B_4(0)$ \\
& 
$\begin{smallmatrix}
\be\neq 0\\
\end{smallmatrix}$
&
$\begin{smallmatrix}
\la\neq 1 \\
\end{smallmatrix}$
& & $\begin{smallmatrix}
\al\neq -1,-\la, & \la\neq 1 \\
\end{smallmatrix}$ \\
$g_t^{-1}$ & 
$\left(\begin{smallmatrix}
0 & t & 0 \\
\frac{-2}{\be} & 0 & \frac{t}{\be}\\
1 & 0 & 0  
\end{smallmatrix}\right)$ &
$\left(\begin{smallmatrix}
0 & t & 0 \\
t^2 & \frac{t}{\la-1} & 0\\
\frac{1}{(1-\la)t} & \frac{-1}{(1-\la)^2t^2} & 1  
\end{smallmatrix}\right)$ &
$\left(\begin{smallmatrix}
0 & t & 0\\
4t^{-2} & -8t^{-3} & -2t^{-1} \\
1 & -\frac{2}{t} & 0  
\end{smallmatrix}\right)$ &
$\left(\begin{smallmatrix}
0 & t & 0 \\
1 & \frac{1}{(\al+1)t} & 0\\
\frac{1}{(1-\la)t} & \frac{1}{(\al+\la)(1-\la)t^2} & 1  
\end{smallmatrix}\right)$
\end{tabular}
\end{center}
\vspace*{0.5cm}
\begin{center}
\begin{tabular}{c|c|c|c|c}
               & $73$ & $74$ & $75$ & $76$ \\
\hline
& & & &  \\
$\ra_{\rm deg}$ & $E_{1,\la}(-1)\ra B_5(0)$ & $E_3\ra A_{10}$ 
& $E_4\ra_{\rm deg}A_{10}$ & $E_5(\be)\ra_{\rm deg}A_{10}$ \\
& 
$\begin{smallmatrix}
\la\neq 1\\
\end{smallmatrix}$
&
&
$\begin{smallmatrix}
i^2=- 1 \\
\end{smallmatrix}$
&  
$\begin{smallmatrix}
\be\neq -\frac{1}{2},-1\\
\end{smallmatrix}$ \\
$g_t^{-1}$ & 
$\left(\begin{smallmatrix}
0 & t & 0 \\
1 & 0 & 0\\
\frac{1}{(\la-1)t} & 0 & -1  
\end{smallmatrix}\right)$ &
$\left(\begin{smallmatrix}
0 & 0 & t\\
t^{3} & 0 & 0 \\
0 & t^2 & 0   
\end{smallmatrix}\right)$ &
$\left(\begin{smallmatrix}
\frac{t^3}{4} & \frac{t^2}{2} & t \\
0 & -t^2 & -2t\\
0 & 0 & it  
\end{smallmatrix}\right)$ &
$\left(\begin{smallmatrix}
0 & 0 & t \\
\frac{2}{(2\be+1)t} & 0 & \frac{-4}{(2\be+1)^2(\be+1)t^3} & \\
0 & 1 & \frac{2}{(2\be+1)t}  
\end{smallmatrix}\right)$
\end{tabular}
\end{center}
\vspace*{0.5cm}
\begin{center}
\begin{tabular}{c|c|c|c|c|c}
               & $77$ & $78$ & $79$ & $80$ & $81$ \\
\hline
& & & & & \\
$\ra_{\rm deg}$ & $E_6\ra A_9$ & $E_{1,1}(\al)\ra A_5$ 
& $E_{2,1}\ra_{\rm deg}A_5$ & $E_{1,1}(-1)\ra_{\rm deg}A_1$ & $D_1\ra_{\rm deg} B_4(0)$\\
& &
$\begin{smallmatrix}
\al\neq -1\\
\end{smallmatrix}$
& & & \\
$g_t^{-1}$ & 
$\left(\begin{smallmatrix}
0 & 0 & t\\
t & 0 & 0 \\
0 & 1 & 0   
\end{smallmatrix}\right)$ &
$\left(\begin{smallmatrix}
-(\al+1)t^2 & t & t^2 \\
0 & 1 & 0 & \\
0 & 0 & 1  
\end{smallmatrix}\right)$ &
$\left(\begin{smallmatrix}
-t^2 & t & 0\\
0 & t & 0 \\
0 & 0 & 1   
\end{smallmatrix}\right)$ &
$\left(\begin{smallmatrix}
t & 0 & 0\\
0 & t & 0 \\
0 & 0 & t   
\end{smallmatrix}\right)$ &
$\left(\begin{smallmatrix}
0 & t & 0\\
t^2 & 0 & 0 \\
t^2 & t & t^3   
\end{smallmatrix}\right)$
\end{tabular}
\end{center}
\vspace*{0.5cm}
\begin{center}
\begin{tabular}{c|c|c|c|c}
               & $82$ & $83$ & $84$ & $85$ \\
\hline
& & & &  \\
$\ra_{\rm deg}$ & $D_2(\al)\ra B_4(0)$ & $D_2(-1)\ra B_5(0)$ 
& $D_2(\al)\ra E_{1,1}(\al)$ & $D_2(-1)\ra E_{2,1}$ \\
& 
$\begin{smallmatrix}
\al\neq -1\\
\end{smallmatrix}$
& & & \\
$g_t^{-1}$ & 
$\left(\begin{smallmatrix}
-(\al+1)t^2 & t & -(\al+1)^2t^3 \\
t & 0 & 0 \\
0 & 1 & 0  
\end{smallmatrix}\right)$ &
$\left(\begin{smallmatrix}
t & 0 & 0\\
0 & 0 & t \\
0 & 1 & 0   
\end{smallmatrix}\right)$ &
$\left(\begin{smallmatrix}
1 & 0 & 0 \\
0 & t^{-1} & 0 \\
0 & 0 & 1  
\end{smallmatrix}\right)$ &
$\left(\begin{smallmatrix}
1 & 0 & 0 \\
-t^{-3} & t^{-1} & 0 \\
t^{-1} & -t & 1  
\end{smallmatrix}\right)$
\end{tabular}
\end{center}

\end{document}